\def\EquationsBySection{\def\theequation
	{\thesection.\arabic{equation}}
	\@addtoreset{equation}{section}}
\theoremstyle{definition}
\newtheorem{Theorem}{\indent  Theorem}[section]  
\newtheorem{Lemma}[Theorem]{\indent  Lemma}
\newtheorem{Corollary}[Theorem]{\indent  Corollary}
\newtheorem{Definition}{\indent  Definition}
\newtheorem{Remark}[Theorem]{\indent  Remark}
\numberwithin{equation}{section}
\newcommand{\be}{\begin{equation}}
	\newcommand{\ee}{\end{equation}}
\newcommand{\bes}{\begin{equation*}}
	\newcommand{\ees}{\end{equation*}}
\renewcommand{\theequation}{\thesection.\arabic{equation}}
\newcommand{\bean}{\begin{eqnarray*}}
	\newcommand{\eean}{\end{eqnarray*}}
\newcommand\old[1]{}
\newenvironment{Proof}{\par\noindent {\it Proof of} \enspace\ignorespaces}{\hfill$\square$}
\begin{document}
	\date{}
	\title{Delay rough evolution equations}
\author{

	{\bf   Shiduo Qu$^{}$\thanks{101300285@seu.edu.cn}$\;$   and  Hongjun Gao$^{}$}\thanks{The corresponding author. hjgao@seu.edu.cn}\\
	\footnotesize{School of Mathematics, Southeast University, Nanjing 211189, China  }  
	
}
	\maketitle
	\noindent {\bf \small Abstract}{\small 
		\quad 	In this paper, we accomplish the existence and stability of the solution of a class of delay rough partial differential equations (DRPDEs).   Moreover,  we prove that the solution of DRPDEs can converge to that of RPDEs in sense of some distance as the delay tends to zero.  As applications, we  employ the main results to the study of a class of  delay  stochastic partial differential equations driven by fractional Brownian motion with Hurst parameter $\alpha\in(\frac{1}{3},\frac{1}{2}]$.}

	\noindent\textbf{Key Words:}	Delay  partial differential equations, Rough path, Convergence.

	\noindent {\sl\textbf{ AMS Subject Classification}} \textbf{(2020):} 35R60, 60H10, 60H15, 60G22. 

		\section{Introduction}
	
	In this paper, we consider the following delay rough PDEs:
	\begin{align}
		\textup{d}y_{t}&=[A y_{t}+F(y_{t}, y_{t-r})]\textup{d}t+G(y_{t},y_{t-r})\cdot\textup{d}\bar{\mathbf{X}}_{t},\label{eq311}\\
		y_{t}&=\phi_{t}, ~~ t\in[-r,0],\label{eq312}
	\end{align}
	where $A$ generates an analytic $C_{0}$-semigroup $\{S_{t}\}_{t\in \mathbb{R}^{+}}$ on a separable Banach
	space $\mathcal{B}$, $F$  is Lipschitz function, $G$ is three-differentiable function, $r$ denotes the time delay, $\bar{\mathbf{X}}=(X,\mathbb{X},\mathbb{X}(-r))$ is delayed $\alpha$-H\"{o}lder path, $\phi_{t}$ is a controlled path representing the initial value of $y_{t}$. 
	
	The equations (\ref{eq311})-(\ref{eq312})  have two elements that deserve our attention: the delay $r$ and delayed $\alpha$-H\"{o}lder path $\bar{\mathbf{X}}$. On the one hand, the application of delay differential equations is a crucial aspect in various fields of physics, engineering and biology. These equations account for the influence of past states on the current behavior of a system, making them particularly useful in modeling systems that exhibit time lags or delays.  For instance, typical examples can be founded in  climate models \cite{MR2343865}, financial models \cite{MR2120285}, population models \cite{MR2523298}. For more details, please refer to  \cite{MR1386683, MR1415838}. 
	
	On the other hand,  the introduction of $\bar{\mathbf{X}}$ aims to facilitate the study of delay differential equations driven by low regularity term, in particular delay stochastic differential equations driven by Brownian motion with Hurst parameter $\alpha\in(\frac{1}{3},\frac{1}{2})$.  An effective approach to studying these equations is based on the framework of rough paths.    The original rough paths theory was pioneered by  Lyons and Qian \cite{MR1654527,MR2036784}.  Early research efforts in this theory often focused on stochastic differential equations (SDEs) \cite{MR4174393}. In \cite{MR2599193}, Gubinelli and Tindel   broadened the scope of application of rough paths, from SDE to SPDEs. After that, more attention  was paid to the study of rough PDEs. For    RPDEs  driven by nonlinear multiplicative noise,  the mild solution was investigated  based on  the framework of semigroup of linear operator in \cite{MR4040992,MR4299812,MR4097587,MR4431448}. Variation approach and energy method  were employ to derive  the well-posedness  of  RPDEs  with unbounded rough drivers (for instance, transport noise) in  \cite{MR3957994,MR3797622,MR4074697}. Random dynamical system for R(P)DEs is also a topic of significant concern within this field including attractor \cite{MR4385780,MR4608383}, invariant manifold \cite{MR4567460,MR4560602,MR4284415},  Wong--Zakai approximations \cite{MR4551604,MR4266219}. In addition,  numerical schemes  for R(P)DEs  were provided  in \cite{MR4100851,MR2891223,MR4656793}.
	
	Recently, delay SDEs have been especially attractive to scholars. If the equation is driven  by fractional Brownian motion with Hurst parameter $\alpha\in[\frac{1}{2},1)$,      It\"{o} integral, Skorohod  integral  or Young integral is sufficient to treat  the corresponding problems \cite{MR2888697,MR3688537,MR2803093,MR3146369,MR2202322,MR2737158} .  If the Hurst parameter is less than $\frac{1}{2}$,  a treatment is rough path theory.   In \cite{MR2453555}, the authors  
	introduced delayed
	controlled path to present  the definition of delay rough integral based  on  rough path theory. And, they further established an
	existence and uniqueness result for delay differential equations (finite dimensions) driven by a $\alpha$-H\"{o}lder function with $\alpha\in(\frac{1}{3}, 1)$. As a application, their results can be used to study  delay SDEs 
	driven by fractional Brownian motion with Hurst parameter $\alpha \in (\frac{1}{3},1)$. In \cite{MR3236092},  these  results were extended  to the case with $\alpha\in(\frac{1}{4}, 1)$.   In \cite{MR4385646}, the authors showed that  delay rough differential equations can generate random dynamical systems  and  proved  a multiplicative
	ergodic theorem on measurable fields of Banach spaces. In  \cite{MR4251893}, the authors establish the existence of invariant manifolds for random rough delay equations. The framework of multiplicative functional is also an approach to address  similar equations. For more details, please refer to \cite{MR4117091,MR3225810,MR2836524,MR2471936}.
	
	It is worth noting that few results are considered for delay infinite-dimensional differential equations  driven by a $\alpha$-H\"{o}lder function with $\alpha< \frac{1}{2}$. The difficulties  arise in the interaction between space-time regularity  and delay. In \cite{MR2453555}, authors considered the delay ODEs. However, since  delay PDEs involve space variables,
	   how to define the rough integral  is a question worthy of serious consideration.  
	Compared to some result about RPDEs \cite{MR4040992,MR4299812,MR4097587,MR4431448}, 
	the initial value  is a controlled path instead of a point and the nonlinear diffusion is coupled with delay terms, so the techniques used for RPDEs without delay may does not work.
    Hence, it requires overcoming obstacles in  establishing the existence of the solution of RPDEs with delay.  Another difficult is to show the solution of  RPDEs is stability with respect to the delay. Namely, how to prove that the solution of delay RPDEs can converge to that of RPDEs as the delay approaches to zero.  The similar argument for delay differential equations (finite dimensions) was studied in \cite{MR4117091} relying on  the framework of multiplicative functional. To best our knowledge, it is unknown  for delay differential equations (infinite dimensions) based on rough path theory.  Therefore, this problem  is also extremely challenging.
	
	The purpose of this article is overcome such difficulties to  study  (\ref{eq311})-(\ref{eq312}). More precisely,   
	the definition of delayed rough convolution is shown  by employing sewing lemma firstly. Secondly, we prove that there exists a global mild solution for  (\ref{eq311})-(\ref{eq312}) by using a prior estimate and iteration. Thirdly, we further discuss the stability of the solution with respect to the initial value and the driver path.  Finally, we provide a framework to investigate the convergence  of the solution with respect to  the delay.  As a result, we show that  the solution of delay RPDEs can approach to that of RPDEs as the delay tends to zero.     In conclusion, we  generalize previous results from equations without delay to equations with delay, from finite-dimensional differential equations to  infinite-dimensional differential equations.
	
	This article is organized as follows. In Section 2, we introduce useful notations and review  relevant concepts. Section 3 gives some basic lemmas. In Section 4, we show the existence of stability of the solution of RPDEs. Section 5  is devoted to the convergence  of the solution with respect to  the delay. In the final section, we present   examples to illustrate our main results. Some  concepts and proofs are givens in Appendix.

	\section{Preliminaries}
	
	In this paper,
	for simplicity of notations, we write $f\lesssim_{k} g$ if $f\leq C_{k} g$ in which $C_{k}$ is a positive constant dependent of  $k$, and $f\lesssim g$ if $f\leq C g$ in which constant $C$ is independent of any objects that interests us.

	Referring  to the framework of \cite{MR3957994,MR4040992}, we introduce some notations as follows
	\begin{Definition}
		Let $\{\mathcal{B}_{\theta}\}_{\theta\in\mathbb{R}} $ be a family of separable Banach spaces  with norms $\|\cdot\| _{\theta}$.  We call $\{\mathcal{B}_{\theta}\}_{\theta\in\mathbb{R}} $  a 
		monotone family of interpolation spaces 
		if  $\mathcal{B}_{\theta_{2}}$ is densely and continuously embedded into $\mathcal{B}_{\theta_{1}}$  
		for $\theta_{1}\leq\theta_{2}$, and  the following interpolation inequality holds: 
		\begin{align}\label{eq053}
			\|x\|^{\theta_{3}-\theta_{1}}_{\theta_{2}}\lesssim\|x\|^{\theta_{3}-\theta_{2}}_{\theta_{1}}\|x\|^{\theta_{2}-\theta_{1}}_{\theta_{3}},~\text{for}~\theta_{1}\leq\theta_{2}\leq\theta_{3},~x\in\mathcal{B}_{\theta_{3}}.
		\end{align}
	\end{Definition}
	
	Throughout this paper, we 	let $\{\mathcal{B}_{\theta}\}_{\theta\in\mathbb{R}} $ be a monotone family of interpolation spaces, and allow semigroup $\{S_{t}\}_{t\in\mathbb{R}^{+}}$ and spaces $\{\mathcal{B}_{\theta}\}_{\theta\in\mathbb{R}} $ enjoy such property:
	\begin{align}\label{eq050}
		\|S_{t}x\|_{\theta+\sigma}\lesssim t^{-\sigma}\|x\|_{\theta},~~\text{and}~~
		\|S_{t}x-x\|_{\theta}\lesssim t^{\sigma}\|x\|_{\theta+\sigma}, 
	\end{align}
	where $x\in\mathcal{B}_{\theta+\sigma}$, $\theta\in\mathbb{R}$, $\sigma\in[0,1]$. 
	
	For $\sigma_{1},\sigma_{2}\in\mathbb{R}$, $k,m,n\in\mathbb{Z}^{+}_{0}$\footnote{$\mathbb{Z}^{+}_{0}$ denotes all the non-negative integers}, define $\mathcal{C}^{k}_{\sigma_{1},\sigma_{2}}(\mathcal{B}^{m}, \mathcal{B}^{n})$ as the space of functions $F:\mathcal{B}^{m}_{\theta}\rightarrow\mathcal{B}^{n}_{\theta+\sigma_{2}}$ for every $\theta\geq\sigma_{1} $ which are $k$-differentiable with respect to each argument, $F$ is bounded operator and $D^{i}F$ is bounded operator for $i=1,\cdots,k.$

	And, define $\text{Lip}_{\sigma_{1},\sigma_{2}}(\mathcal{B}^{m}, \mathcal{B}^{n})$ as the space of functions $F:\mathcal{B}^{m}_{\theta}\rightarrow\mathcal{B}^{n}_{\theta+\sigma_{2}}$ for every $\theta\geq\sigma_{1} $  which are  Lipschitz continuous with respect to each argument. 

	For  a compact interval $I\in\mathbb{R}$ and $n\in\mathbb{N}$, 
	set $\Delta_{n,I}:=\{(t_{1}, t_{2},\cdots,t_{n}) \in I^{n}:t_{1}\geq t_{2}\geq \dots\geq t_{n}\}$.
	For  a Banach space $U$ with norm $\|\cdot\|_{U}$,  we let $\mathcal{C}_{n}(I,U)$ be the  space of continuous functions from $\Delta_{n,I}$ to $U$, and 
	set 
	\begin{align*}
		\|f\|_{\alpha,U}:&=\sup_{\substack{t_{1},t_{2}\in \Delta_{2,I}\\ t_{1}\neq t_{2}   }  }\frac{\|f_{t_{1},  t_{2}}\|_{U}}{|t_{2}-t_{1}|^{\alpha}},~f\in \mathcal{C}_{2}(I,U),~\alpha>0,\\
		\|f\|_{\alpha_{1},\alpha_{2},U}:&=\sup_{		
			\substack{
			t_{1},t_{2},t_{3}\in\Delta_{3,I}\\  
	  t_{1}\neq t_{2}, t_{2}\neq t_{3}}
}\frac{\|f_{t_{1},t_{2},t_{3}}\|_{U}}{(t_{3}-t_{2})^{\alpha_{1}}  
			(t_{2}-t_{1})^{\alpha_{2}}},~f\in \mathcal{C}_{3}(I,U)~\alpha_{1},\alpha_{2}>0.
	\end{align*}
	Set $\delta: \mathcal{C}_{n-1}(I,U)\rightarrow\mathcal{C}_{n}(I,U)$ satisfying  
	\begin{align*}
		\delta f_{t_{1},t_{2},\dots,t_{n} }=\sum_{i=1}^{n}(-1)^{i}f_{t_{1},\dots,\hat{t}_{i},\dots, t_{n}},
	\end{align*}
	where the argument $\hat{t}_{i}$ is omitted. 
	In particular, we note 
	\begin{align*}
		\delta f_{t,s}=f_{t}-f_{s},~~\text{and}~~ \delta g_{t,u,s}=g_{t,s}-g_{t,u}-g_{u,s}.
	\end{align*}
	Define 
	\begin{align*}
		\mathcal{C}_{1}^{\alpha}(I,U)&:=\{f\in \mathcal{C}_{1}(I,U): \|\delta f\|_{\alpha,U}=   \sup_{\substack{t_{1},t_{2}\in I\\ t_{1}\neq t_{2}   }  }\frac{\|f_{t_{2}}-f_{t_{1}}  \|_{U}}{|t_{2}-t_{1}|^{\alpha}}
  <\infty\},\\
		\mathcal{C}_{2}^{\alpha}(I,U)&:=\{f\in \mathcal{C}_{2}(I,U): \|f\|_{\alpha,U}<\infty\},\\
		\mathcal{C}_{3}^{\alpha_{1},\alpha_{2}}(I,U)&:=\{f\in \mathcal{C}_{3}(I,U): \|f\|_{\alpha_{1},\alpha_{2},U}<\infty\},
	\end{align*}
	Here, $\mathcal{C}_{1}^{\alpha}(I,U)$ denotes the space of all $\alpha$-H\"older continuous functions.
	We endow space $\mathcal{C}_{1}(I,U)$ with norm $\|f\|_{\infty,U}=\sup_{t\in I}\|f_{t}\|_{U}$, as well as space $\mathcal{C}_{1}^{\alpha}(I,U)$ with norm $  \|f\|_{\alpha,U}=\|f\|_{\infty,U}+\|\delta f\|_{\alpha,U}$. If $U=\mathcal{B}_{\theta}$, we write $\|\delta f\|_{\alpha, \theta}$ as shorthand. Similar simplicities are also used later.  Moreover, set $\mathcal{C}^{0,\alpha}_{\theta}(I):=\mathcal{C}_{1}(I,\mathcal{B}_{\theta})\cap\mathcal{C}_{1}^{\alpha}(I,\mathcal{B}_{\theta-\alpha}),$ and $\mathcal{C}^{\alpha,2\alpha}_{\theta}(I):=\mathcal{C}_{2}^{\alpha}(I,\mathcal{B}_{\theta-\alpha})\cap\mathcal{C}_{2}^{2\alpha}(I,\mathcal{B}_{\theta-2\alpha})$. And, interpolation inequality 
	(\ref{eq053}) implies
	\begin{align}\label{eq352}
		\|f_{t,s}\|_{\theta-\theta_{1}}\lesssim (\|f\|_{\alpha,\theta-\alpha}+\|f\|_{2\alpha,\theta-2\alpha})(t-s)^{\theta_{1}},
	\end{align}
	for $f\in\mathcal{C}^{\alpha,2\alpha}_{\theta}(I)$, $\theta_{1}\in[\alpha,2\alpha]$.


	Next, let us introduce delayed rough path, while the definition of rough path without delay provided in Appendix \ref{A.1}.  We remark that 
	delayed rough path for ODEs was proposed in \cite{MR2453555}, while the version presented here is for PDEs. 
	  
	\begin{Definition}[Delayed $\alpha$-Hölder  path] \label{def1}
		Let $X:\mathbb{R}\rightarrow \mathbb{R}^{d}$ be a locally $\alpha$-H\"older path. For a compact interval $I \subset \mathbb{R}$, set  $I_{r}:=\{e\in \mathbb{R}: e\in I ~\text{or}~  e+r\in I\}$. For $\alpha\in (\frac{1}{3},\frac{1}{2}]$, we call a triple  $\bar{\mathbf{X}}=(X, \mathbb{X},\mathbb{X}(-r))$  $\alpha$-Hölder rough path with delay $r>0$, if $X\in \mathcal{C}^{\alpha}_{1}(I_{r}, \mathbb{R}^d)$, $\mathbb{X}\in \mathcal{C}^{2 \alpha}_{2}\left(I_{r},\mathbb{R}^d \times \mathbb{R}^d\right)$ and  $\mathbb{X}(-r)\in \mathcal{C}^{2 \alpha}_{2}\left(I,\mathbb{R}^d \times \mathbb{R}^d\right)$ such that the following  Chen's relations hold:
		\begin{align*}
			\mathbb{X}^{i,j}_{t,s}-\mathbb{X}^{i,j}_{u,s}-\mathbb{X}^{i,j}_{t,u}&=X_{u,s}^{i}\otimes X_{t,u}^{j},\\
			\mathbb{X}(-r)^{i,j}_{t,s}-\mathbb{X}(-r)^{i,j}_{u,s}-\mathbb{X}(-r)^{i,j}_{t,u}&=\delta X^{i}_{u-r,s-r}\otimes \delta X^{j}_{t,u},
		\end{align*}
		for $t,u,s \in \Delta_{3,I}$, and $1\leq i, j\leq d$.
	\end{Definition}
	
	Compare to $\alpha$-H\"{o}lder path $\mathbf{X}=(X, \mathbb{X})$ as given in Definition \ref{A.11},  the additional L\'{e}vy area component $\mathbb{X}(-r)$ is considered in the delayed version $\mathbf{\bar{X}}$.  
	
	Denote  by $\bar{\mathcal{C}}^{\alpha}(I,\mathbb{R}^d)$ the space of delayed $\alpha$-H\"older rough paths  $\bar{\mathbf{X}}=(X, \mathbb{X},\mathbb{X}(-r))$.
	For two delayed $\alpha$-Hölder rough paths $\bar{\mathbf{X}}, \bar{\mathbf{Y}}\in \bar{\mathcal{C}}^{\alpha}(I, \mathbb{R}^d)$,  the metric of $\bar{\mathcal{C}}^{\alpha}(I, \mathbb{R}^d)$ is defined by 
	\begin{align*}
		\rho_{\alpha,I}(\bar{\mathbf{X}},\bar{\mathbf{Y}})=\|\delta(X-Y)\|_{\alpha,\mathbb{R}^{d}}+\|\mathbb{X}-\mathbb{Y}\|_{2\alpha,\mathbb{R}^{d}\times R^{d}}+\|\mathbb{X}(-r)-\mathbb{Y}(-r)\|_{2\alpha,\mathbb{R}^{d}\times \mathbb{R}^{d}}.
	\end{align*}
	Set $\rho_{\alpha,I}(\bar{\mathbf{X}})=	\rho_{\alpha,I}(\bar{\mathbf{X}},0)$.
	
	
	\begin{Definition}[Delayed controlled  path]\label{def002}
		Let  $\bar{\mathbf{X}}=(X, \mathbb{X},\mathbb{X}(-r))\in\bar{\mathcal{C}}^{\alpha}(I, \mathbb{R}^d)$ with $r>0$, $\alpha\in(\frac{1}{3},\frac{1}{2}]$. We call a triple $(y,y',\bar{y}')$  a delayed controlled  path based on $\bar{\mathbf{X}}$ if $y\in\mathcal{C}_{1}(I,\mathcal{B}_{\theta})$ and $y',\bar{y}'\in (\mathcal{C}^{0,\alpha}_{\theta-\alpha}(I))^{d}$  for $\theta\in\mathbb{R}$,  such that 
		\begin{align*}
			\bar{R}^{y}_{t,s}:&=\delta y_{t,s}-y'_{s}\cdot \delta X_{t,s}-\bar{y}'_{s}\cdot \delta X_{t-r,s-r}\\&=\delta y_{t,s}-\sum^{d}_{i=1}y'^{,i}_{s}\delta X^{i}_{t,s}-\sum^{d}_{i=1}\bar{y}'^{,i}_{s}\delta X^{i}_{t-r,s-r},~\text{for}~t,s \in \Delta_{2,I},
		\end{align*}
		belongs to $\mathcal{C}_{\theta}^{\alpha,2\alpha}(I)$.
	\end{Definition}

	Denote by $\mathcal{D}^{2\alpha}_{\bar{\mathbf{X}},\theta}(I)$ the space of delayed controlled rough paths based on $\bar{\mathbf{X}}$ with $I$, and endow the norm:
	\begin{align*}
		\|y,y',\bar{y}'\|_{\bar{\mathbf{X}},2\alpha,\theta}:&=\|y\|_{\infty,\theta}+\|y'\|_{\infty,\theta-\alpha}+\|y'\|_{\alpha,\theta-2\alpha}+\|\bar{y}'\|_{\infty,\theta-\alpha}+\|\bar{y}'\|_{\alpha,\theta-2\alpha}\\
		&~~~+\|\bar{R}^{y}\|_{\alpha,\theta-\alpha}+\|\bar{R}^{y}\|_{2\alpha,\theta-2\alpha}.
	\end{align*}
	Obviously, a controlled path is also  a delayed controlled path.
	Here, we would like to remark that
	$y$ in Definition \ref{def002}, has the H\"{o}lder regularity with respect to time. In fact, it is easy to check 
	\begin{align}
		\|\delta y\|_{\alpha,\theta-\alpha}&\leq (\|y'\|_{\infty,\theta-\alpha}+\|\bar{y}'\|_{\infty,\theta-\alpha}  )\|\delta X\|_{\alpha, \mathbb{R}^{d}}+  	\|\delta \bar{R}^{y}\|_{\alpha,\theta-\alpha}\notag\\
		&\leq (1+\rho_{\alpha, I_{-r}}(\bar{\mathbf{X}})) \|y,y',\bar{y}'\|_{\bar{\mathbf{X}},2\alpha,\theta} \label{eq103}
	\end{align}

	For two delayed rough path $(y,y',\bar{y}')$ and $(z,z',\bar{z}')$, we introduce the distance:
	\begin{align*}
		\rho_{2\tilde{\alpha},2\alpha, \theta}(y,z):&= \|y-z\|_{\infty,\theta}+\|y'-z'\|_{\infty,\theta-\alpha}+\|\delta(y'-z')\|_{\tilde{\alpha},\theta-2\alpha}+\|\bar{y}'-\bar{z}'\|_{\infty,\theta-\alpha}\\
		&~~~~+\|\delta(\bar{y}'-\bar{z}')\|_{\tilde{\alpha},\theta-2\alpha}+\|\bar{R}^{y}-\bar{R}^{z}\|_{\tilde{\alpha},\theta-\alpha}+ \|\bar{R}^{y}-\bar{R}^{z}\|_{2\tilde{\alpha},\theta-2\alpha},
	\end{align*}
	where $\tilde{\alpha},\alpha,\theta \in\mathbb{R}$.

	\section{Basic lemmas}
	In this section, we will show basic properties for delayed controlled path, and provide the definition of rough convolution. For the convenience  of reading, we present the proofs of these lemmas in the Appendix \ref{A.1}.
	
	Our first task is to prove that two controlled  paths composited with regular functions can become a delayed controlled   path. Compared with  previous results \cite{MR4299812,MR2453555},  our version  not only incorporates suitable space-time regularity, but also take account into  the effect of delay.
	\begin{Lemma}\label{lem001}
		Let $G\in \mathcal{C}^{2}_{\theta-2\alpha,-\sigma}(\mathcal{B}^{2},\mathcal{B})$ with $\theta\in\mathbb{R}$, $\alpha\in(\frac{1}{3},\frac{1}{2}]$, $\sigma\geq 0$, $T>0$, $r>0$. Let $(y,y')\in \mathcal{D}^{2\alpha }_{\mathbf{X}, \theta}(I_{1})$,  $(z,z')\in \mathcal{D}^{2\alpha }_{\mathbf{X}, \theta}(I_{2})$ with $I_{1}=[0,T]$ and $I_{2}=[-r,T-r]$. Set 
		\begin{align*}
			(m_{t}, m'_{t},\bar{m}'_{t}):=(G(y_{t},z_{t-r}), D_{x_{1}}G(y_{t},z_{t-r})\circ y'_{t},  D_{x_{2}}G(y_{t}, z_{t-r})\circ z'_{t-r}),
		\end{align*}
		where $D_{x_{i}}$ denotes the derivative of the $i$-th argument.  Then, we have $(m, m',\bar{m}')\in \mathcal{D}^{2\alpha}_{\bar{\mathbf{X}},\theta-\sigma}(I_{1})$, and 
		\begin{align}
			\|(m, m',\bar{m}')\|_{  \bar{\mathbf{X}}, 2\alpha,\theta-\sigma}&\lesssim_{G} (1+\rho_{\alpha, I_{1}}(\mathbf{X})+\rho_{\alpha, I_{2}}(\mathbf{X}) )^{2}\times(1+\|y,y'\|_{\mathbf{X},2\alpha,\theta}+
			\|z,z'\|_{\mathbf{X},2\alpha,\theta})^{2}.\label{eq101}
		\end{align} 
	\end{Lemma}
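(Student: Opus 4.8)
The plan is to verify directly that $(m,m',\bar m')$ is a delayed controlled path based on $\bar{\mathbf X}$, i.e. that the remainder $\bar R^{m}$ defined in Definition \ref{def002} lies in $\mathcal{C}^{\alpha,2\alpha}_{\theta-\sigma}(I_{1})$, and then to track the norms to get the quantitative bound (\ref{eq101}). First I would collect the basic building blocks: since $(y,y')\in\mathcal{D}^{2\alpha}_{\mathbf X,\theta}(I_{1})$ and $(z,z')\in\mathcal{D}^{2\alpha}_{\mathbf X,\theta}(I_{2})$, by (\ref{eq103})-type estimates both $y$ and $z(\cdot-r)$ are $\alpha$-H\"older in time into the appropriate $\mathcal B_{\theta-\alpha}$-scale, and by (\ref{eq352}) the remainders $R^{y},R^{z}$ interpolate between the $\alpha$- and $2\alpha$-levels. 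Because $G\in\mathcal{C}^{2}_{\theta-2\alpha,-\sigma}(\mathcal B^2,\mathcal B)$, $G$ and $DG$ map the relevant spaces boundedly with a loss of $\sigma$ in regularity, and $D^2G$ is bounded; this is exactly what is needed to compose.

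Next I would compute the increment $\delta m_{t,s}=G(y_t,z_{t-r})-G(y_s,z_{s-r})$ via a first-order Taylor expansion in both arguments simultaneously, writing
\begin{align*}
\delta m_{t,s}=D_{x_1}G(y_s,z_{s-r})\circ\delta y_{t,s}+D_{x_2}G(y_s,z_{s-r})\circ\delta z_{t-r,s-r}+Q_{t,s},
\end{align*}
where $Q_{t,s}$ is the Taylor remainder controlled by $\|D^2G\|$ times products of $\delta y$ and $\delta z(\cdot-r)$, hence of order $(t-s)^{2\alpha}$ in $\mathcal B_{\theta-2\alpha-\sigma}$ (and of order $(t-s)^{\alpha}$ in $\mathcal B_{\theta-\alpha-\sigma}$ after interpolation). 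Substituting the controlled-path expansions $\delta y_{t,s}=y'_s\cdot\delta X_{t,s}+R^y_{t,s}$ and $\delta z_{t-r,s-r}=z'_{s-r}\cdot\delta X_{t-r,s-r}+R^z_{t-r,s-r}$ and comparing with the definition $\bar R^{m}_{t,s}=\delta m_{t,s}-m'_s\cdot\delta X_{t,s}-\bar m'_s\cdot\delta X_{t-r,s-r}$, the $\delta X_{t,s}$ and $\delta X_{t-r,s-r}$ terms cancel exactly by the choice of $m'$ and $\bar m'$, leaving
\begin{align*}
\bar R^{m}_{t,s}=D_{x_1}G(y_s,z_{s-r})\circ R^{y}_{t,s}+D_{x_2}G(y_s,z_{s-r})\circ R^{z}_{t-r,s-r}+Q_{t,s}.
\end{align*}
Each of the three pieces is then estimated at the $\alpha$- and $2\alpha$-levels: the first two using boundedness of $DG$ together with $\|R^y\|,\|R^z\|$ from the controlled-path norms and (\ref{eq352}), the third using boundedness of $D^2G$ and the time-H\"older bounds on $y$, $z(\cdot-r)$ from (\ref{eq103}). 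This shows $\bar R^{m}\in\mathcal C^{\alpha,2\alpha}_{\theta-\sigma}(I_1)$. It remains to check that $m'$ and $\bar m'$ themselves lie in $(\mathcal C^{0,\alpha}_{\theta-\alpha-\sigma}(I_1))^d$: boundedness into $\mathcal B_{\theta-\alpha-\sigma}$ follows from boundedness of $DG$ and of $y',z'$, while the $\alpha$-H\"older bound into $\mathcal B_{\theta-2\alpha-\sigma}$ follows by expanding $\delta m'_{t,s}$ (and $\delta\bar m'_{t,s}$) with one Taylor step, using $\|D^2G\|$, the H\"older regularity of $y$ and $z(\cdot-r)$, and $\|\delta y'\|$, $\|\delta z'\|$.

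Assembling these estimates and bounding every factor of $\rho_{\alpha}(\mathbf X)$ and of the controlled-path norms that appears, one reads off the polynomial bound (\ref{eq101}): each of $\delta X$, $\mathbb X$ contributes at most two powers of $\rho_{\alpha,I_1}(\mathbf X)+\rho_{\alpha,I_2}(\mathbf X)$ (one from the expansion of $\delta y$/$\delta z$, one from the composition step), and each of $y'$, $z'$, $R^y$, $R^z$ contributes at most two powers of $\|y,y'\|_{\mathbf X,2\alpha,\theta}+\|z,z'\|_{\mathbf X,2\alpha,\theta}$, with the $1+$'s absorbing the lower-order terms; the implicit constant depends only on $\|G\|_{\mathcal C^2}$. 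The main obstacle, and the place where care is genuinely needed, is bookkeeping the interpolation of regularity scales: every composition with $G$ or $DG$ costs $\sigma$ derivatives, and one must ensure that the hypothesis $G\in\mathcal C^2_{\theta-2\alpha,-\sigma}$ is applied at the \emph{lowest} scale $\theta-2\alpha$ that the remainders $R^y,R^z$ reach (so that the constraint $\theta\geq\theta-2\alpha$ in the definition of $\mathcal C^2_{\sigma_1,\sigma_2}$ is met), while still producing outputs at the higher scales $\theta-\alpha-\sigma$ and $\theta-\sigma$ needed for $m'$ and $m$; the delay enters only through the shift $z_{t-r}$, which is harmless because $(z,z')$ is controlled on the shifted interval $I_2=[-r,T-r]$, so no new phenomenon arises there beyond keeping the two interval-dependent $\rho_{\alpha}$-norms separate.
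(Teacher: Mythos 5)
Your proposal is correct and follows essentially the same route as the paper's proof: you expand $\delta m_{t,s}$ by a first-order Taylor formula, identify $\bar R^{m}_{t,s}$ as the sum of $D_{x_1}G\circ R^{y}_{t,s}$, $D_{x_2}G\circ R^{z}_{t-r,s-r}$, and a quadratic Taylor remainder $Q_{t,s}$ controlled by $D^2G$ (the paper's $K^{4,m}$ plus $K^{1,m}+K^{2,m}+K^{3,m}$), estimate each piece, and separately verify the $\alpha$-H\"older regularity of $m'$ and $\bar m'$. One small imprecision: the quadratic remainder $Q_{t,s}$ is of order $(t-s)^{2\alpha}$ already in the \emph{higher} scale $\mathcal{B}_{\theta-\alpha-\sigma}$ (since it is built from $\delta y_{t,s}\otimes\delta y_{t,s}$, etc.), which then yields both required bounds by embedding rather than by the interpolation you describe; this does not affect the conclusion.
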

	
		\begin{Corollary}\label{cor001}
			Under the same settings of Lemma $\ref{lem001}$, we further assume $y'_{t}=G(y_{t},z_{t-r})$. Moreover, assume for  $y\in \mathcal{B}_{\theta-\alpha}$,   
			the derivatives  of 
			\begin{align*}
				D_{x_{1}}G(\cdot,y)\circ G(\cdot,y): \mathcal{B}_{\theta-\alpha}\rightarrow \mathcal{B}_{\theta-2\alpha-\sigma}
			\end{align*}
			and 
			\begin{align*}
				D_{x_{1}}G(y,\cdot)\circ G(y,\cdot): \mathcal{B}_{\theta-\alpha}\rightarrow \mathcal{B}_{\theta-2\alpha-\sigma}
			\end{align*}
			are both bounded. Then 	\begin{align*}
				\|(m, m',\bar{m}')\|_{  \bar{\mathbf{X}}, 2\alpha,\theta-\sigma}&\lesssim_{G} (1+\rho_{\alpha, I_{1}}(\mathbf{X})+\rho_{\alpha, I_{2}}(\mathbf{X}) )^{2}\times(1+\|y,y'\|_{\mathbf{X},2\alpha,\theta}\\
				&~~~+\|y,y'\|_{\mathbf{X},2\alpha,\theta}\|z,z'\|_{\mathbf{X},2\alpha,\theta}+ 
				\|z,z'\|^{2}_{\mathbf{X},2\alpha,\theta}).
			\end{align*} 
		\end{Corollary}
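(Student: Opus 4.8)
The plan is to reduce Corollary \ref{cor001} to Lemma \ref{lem001} by re-examining the estimate \eqref{eq101}, keeping track of the powers of $\|y,y'\|_{\mathbf{X},2\alpha,\theta}$ and $\|z,z'\|_{\mathbf{X},2\alpha,\theta}$ separately rather than bounding both by the single quantity $(1+\|y,y'\|+\|z,z'\|)^2$. The extra hypothesis $y'_t = G(y_t,z_{t-r})$ means the Gubinelli derivative of $(y,y')$ is itself the composition, so in the decomposition of $\bar R^m$ coming from the Taylor expansion of $G$, the terms involving $y'$ get replaced by $G(y,z_{t-r})$, whose $\mathcal{C}^2_{\theta-2\alpha,-\sigma}$-bounds are controlled by the constant $\lesssim_G 1$ and not by $\|y,y'\|$. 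Concretely, I would revisit the three pieces of the controlled-path norm of $(m,m',\bar m')$ — the sup norms of $m,m',\bar m'$, the Hölder seminorms of $m',\bar m'$, and the two seminorms of $\bar R^m$ — and in each place where \eqref{eq101} produced a factor $\|y,y'\|\cdot\|z,z'\|$ or $\|z,z'\|^2$ coming purely from the $z$-argument, leave it; but wherever a factor $\|y,y'\|^2$ appeared it must have arisen from a product $y'_t\otimes y'_s$-type term (e.g. in expanding $\delta m'$ or the remainder $\bar R^m$), and there one substitution $y' = G(y,\cdot)$ turns one of the two factors into a bounded operator norm.

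The key step is therefore the remainder estimate. Writing, as in the proof of Lemma \ref{lem001},
\begin{align*}
\bar R^m_{t,s} = G(y_t,z_{t-r}) - G(y_s,z_{s-r}) - D_{x_1}G(y_s,z_{s-r})\circ y'_s\cdot\delta X_{t,s} - D_{x_2}G(y_s,z_{s-r})\circ z'_{s-r}\cdot\delta X_{t-r,s-r},
\end{align*}
one expands $G$ to second order in each argument; the genuinely quadratic-in-$y$ contribution is of the form $D^2_{x_1}G(\cdot)\big(y'_s\delta X,\, y'_s\delta X\big)$ plus the piece $D_{x_1}G(y_s,z_{s-r})\circ \bar R^y_{t,s}$, and $\bar R^y_{t,s}$ is controlled by $\|y,y'\|$, while the first piece, after inserting $y'_s = G(y_s,z_{s-r})$, becomes $D^2_{x_1}G\big(G(y_s,z_{s-r})\delta X, G(y_s,z_{s-r})\delta X\big)$, bounded by $\lesssim_G 1$ times $\|\delta X\|^2_{\alpha}$ with no power of $\|y,y'\|$ at all. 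The hypothesis that $D(D_{x_1}G(\cdot,y)\circ G(\cdot,y))$ and $D(D_{x_1}G(y,\cdot)\circ G(y,\cdot))$ are bounded is exactly what is needed to control the Hölder-in-$s$ variation of these substituted second-order terms (and similarly for the $\delta m'$ estimate, where $m'_t = D_{x_1}G(y_t,z_{t-r})\circ G(y_t,z_{t-r})$, so its time increment needs the Lipschitz bound on $D_{x_1}G(\cdot,y)\circ G(\cdot,y)$ in the first slot and on $D_{x_1}G(y,\cdot)\circ G(y,\cdot)$ — composed with the increment of $z$ — in the second slot). Chasing these through the sewing/interpolation bounds \eqref{eq352} and \eqref{eq050} exactly as in Lemma \ref{lem001} yields the stated bound with $\|y,y'\|^2$ downgraded to $\|y,y'\|$.

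The main obstacle I anticipate is bookkeeping: making sure that \emph{every} occurrence of $\|y,y'\|^2$ in the proof of \eqref{eq101} — there will be several, distributed across the remainder term, the increment of $m'$, and any cross terms — genuinely comes from a place where $y'$ appears twice (so the substitution applies) rather than from, say, an $\|\bar R^y\|$ factor times a $\|y'\|$ factor, which does \emph{not} simplify. A careful audit should show that the only such irreducible term is $D_{x_1}G(y_s,z_{s-r})\circ\bar R^y_{t,s}$, which contributes $\lesssim_G \|y,y'\|$ (linear, not quadratic), consistent with the claimed bound. The $z$-dependence is untouched throughout, since no extra structural assumption is placed on $z'$, so the factors $\|z,z'\|$ and $\|z,z'\|^2$ remain as in \eqref{eq101}. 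One should also double-check that the product structure $\|y,y'\|\cdot\|z,z'\|$ surviving in the cross terms is correctly reproduced — this comes from terms like $D^2_{x_1 x_2}G(\cdot)(y'_s\delta X, z'_{s-r}\delta X_{-r})$, which carries one factor of each.
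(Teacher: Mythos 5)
Your plan matches the paper's: plug $y'_s = G(y_s,z_{s-r})$ into the controlled-path decomposition of $(m,m',\bar m')$ and use the boundedness hypothesis on $D[D_{x_1}G(\cdot,y)\circ G(\cdot,y)]$ and $D[D_{x_1}G(y,\cdot)\circ G(y,\cdot)]$ to control the $s$-variation of $m'_t = D_{x_1}G(y_t,z_{t-r})\circ G(y_t,z_{t-r})$ and the leading piece of $\bar R^m$ linearly in $\|y,y'\|_{\mathbf X,2\alpha,\theta}$, thereby eliminating exactly the $\|y,y'\|^2$ contribution from Lemma \ref{lem001}. The paper implements the $\bar R^m$ step by switching to a first-order Taylor expansion with integral remainder, so the dangerous piece becomes $\int_0^1[D_{x_1}G(\text{segment}) - D_{x_1}G(\text{base})]\,d\tau \circ G(y_s,z_{s-r})\cdot\delta X_{t,s}$, while you keep the second-order form from Lemma \ref{lem001} and then split $\delta y = y'\cdot\delta X + R^y$; both routes arrive at the same conclusion, yours with a bit more bookkeeping.

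One point needs correcting. After the substitution you assert that the pure second-order term $D^2_{x_1}G\big(G(y_s,z_{s-r})\delta X,\, G(y_s,z_{s-r})\delta X\big)$ is bounded by $\lesssim_G 1$ times $\|\delta X\|_\alpha^2$ with no power of $\|y,y'\|$ at all. That is too strong. The hypothesis (together with boundedness of $D_{x_1}G$, which holds since $G\in\mathcal C^2_{\theta-2\alpha,-\sigma}$) shows that $h\mapsto D^2_{x_1}G(y_s,z_{s-r})\big(h,\,G(y_s,z_{s-r})\big)$ is a bounded operator, but the remaining slot of $G(y_s,z_{s-r})$ is not absorbed and still contributes $1+\|y\|_{\infty,\theta}+\|z\|_{\infty,\theta}$, so the term is linear in $\|y,y'\|$ (and in $\|z,z'\|$), not constant. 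This does not break the proof, because the corollary's target bound accommodates $\|y,y'\|$, $\|y,y'\|\|z,z'\|$, and $\|z,z'\|^2$ (only $\|y,y'\|^2$ is excluded), but the audit you describe must track this correctly. (A minor notational slip: $(y,y')$ and $(z,z')$ are controlled by $\mathbf X$, not $\bar{\mathbf X}$, so the relevant remainder is $R^y$, not $\bar R^y$.)
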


	Based on Lemma \ref{lem001}, we further have the following result in case of $G$ with better regularity.
	
	\begin{Lemma}\label{lem005}
		Under the same settings of Lemma $\ref{lem001}$, we further assume $G\in \mathcal{C}^{3}_{\theta-2\alpha,-\sigma}(\mathcal{B}^{2},\mathcal{B})$. Let $(u,u')\in \mathcal{D}^{2\alpha }_{\mathbf{X}, \theta}(I_{1})$,  $(v,v')\in \mathcal{D}^{2\alpha }_{\mathbf{X}, \theta}(I_{2})$. Set 
		\begin{align*}
			(l_{t}, l'_{t},\bar{l}'_{t}):=(G(u_{t},v_{t-r}), D_{x_{1}}G(u_{t},v_{t-r})\circ u'_{t},  D_{x_{2}}G(u_{t}, v_{t-r})\circ v'_{t-r}).
		\end{align*}
		Then, we have 
		\begin{align*}
			&~~~\|(m-l, m'-l',\bar{m}'-\bar{l}')\|_{\bar{\mathbf{X}}, 2\alpha,\theta-\sigma}\\
			&\lesssim_{G} (1+\rho_{\alpha, I_{1}}(\mathbf{X})+\rho_{\alpha, I_{2}}(\mathbf{X}) )^{2} (\|y-u,y'-u'\|_{\mathbf{X},2\alpha,\theta}+\|z-v,z'-v'\|_{\mathbf{X},2\alpha,\theta}    )\\
			&~~~\times(1+\|y,y'\|_{\mathbf{X},2\alpha,\theta}+
			\|z,z'\|_{\mathbf{X},2\alpha,\theta}+\|u,u'\|_{\mathbf{X},2\alpha,\theta}+\|v,v'\|_{\mathbf{X},2\alpha,\theta})^{2}.
		\end{align*}
	\end{Lemma}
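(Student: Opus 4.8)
The plan is to run the proof of Lemma~\ref{lem001} a second time, now propagating first–order differences rather than the quantities themselves; the only genuinely new ingredient is that $G\in\mathcal{C}^{3}_{\theta-2\alpha,-\sigma}(\mathcal{B}^{2},\mathcal{B})$ supplies \emph{bounded and Lipschitz} derivatives $D_{x_{i}}G$ and $D_{x_{i}}D_{x_{j}}G$ on every $\mathcal{B}_{\rho}$ with $\rho\ge\theta-2\alpha$, which is exactly what a stability (rather than an a~priori) estimate needs. Concretely, I would unfold the left–hand side into the seven seminorms defining $\|\cdot\|_{\bar{\mathbf{X}},2\alpha,\theta-\sigma}$ — the sup norms of $m-l$, $m'-l'$, $\bar m'-\bar l'$, the $\alpha$-Hölder-in-time norms of $m'-l'$ and $\bar m'-\bar l'$, and the $\alpha$- and $2\alpha$-Hölder norms of $\bar R^{m}-\bar R^{l}$ — and estimate each one separately. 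The two mechanical tools used throughout are: (i) the fundamental theorem of calculus along the segment $\lambda\mapsto(u_{t}+\lambda(y_{t}-u_{t}),\,v_{t-r}+\lambda(z_{t-r}-v_{t-r}))$, which rewrites $G(y_{t},z_{t-r})-G(u_{t},v_{t-r})=\int_{0}^{1}\big[D_{x_{1}}G(\cdots)(y_{t}-u_{t})+D_{x_{2}}G(\cdots)(z_{t-r}-v_{t-r})\big]\,d\lambda$, and analogously for differences of $D_{x_{i}}G$ (which costs one more derivative of $G$); and (ii) add-and-subtract telescoping, which splits every difference of composites into a ``same function, different controlled path'' piece and a ``different function, same controlled path'' piece.

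For the level-$0$ and level-$1$ seminorms I would argue as follows. The sup-norm bounds for $m-l,\ m'-l',\ \bar m'-\bar l'$ follow from tool~(i), boundedness of $D_{x_{i}}G$ and $D_{x_{i}}D_{x_{j}}G$, and the trivial inequalities $\|y-u\|_{\infty,\theta}\le\|y-u,y'-u'\|_{\mathbf{X},2\alpha,\theta}$, etc. For the derivative parts one first telescopes, e.g. $m'-l'=D_{x_{1}}G(y_{\cdot},z_{\cdot-r})\circ(y'_{\cdot}-u'_{\cdot})+\big(D_{x_{1}}G(y_{\cdot},z_{\cdot-r})-D_{x_{1}}G(u_{\cdot},v_{\cdot-r})\big)\circ u'_{\cdot}$: the first summand is a composite of the type treated in Lemma~\ref{lem001} applied to $D_{x_{1}}G\in\mathcal{C}^{2}$, acting on the controlled path $y'-u'$, and the second is the FTC-representation of the composite difference, again of the $\mathcal{C}^{2}$-type, multiplied by $u'$. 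The $\alpha$-Hölder-in-time norms are handled the same way, now also invoking the time regularity $\|\delta y\|_{\alpha,\theta-\alpha}\lesssim(1+\rho_{\alpha,I_{1}}(\mathbf{X})+\rho_{\alpha,I_{2}}(\mathbf{X}))\|y,y'\|_{\mathbf{X},2\alpha,\theta}$ of a controlled path (the ordinary analogue of~\eqref{eq103}) and its difference version to control increments of the arguments. Each base path enters at most quadratically, which is the source of the factor $(1+\|y,y'\|_{\mathbf{X},2\alpha,\theta}+\|z,z'\|_{\mathbf{X},2\alpha,\theta}+\|u,u'\|_{\mathbf{X},2\alpha,\theta}+\|v,v'\|_{\mathbf{X},2\alpha,\theta})^{2}$, while $\|y-u,y'-u'\|_{\mathbf{X},2\alpha,\theta}+\|z-v,z'-v'\|_{\mathbf{X},2\alpha,\theta}$ is pulled out linearly.

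The crux is $\bar R^{m}-\bar R^{l}$ in the spaces $\mathcal{C}^{\alpha}_{2}(I_{1},\mathcal{B}_{\theta-\alpha-\sigma})$ and $\mathcal{C}^{2\alpha}_{2}(I_{1},\mathcal{B}_{\theta-2\alpha-\sigma})$. Setting $R^{y}_{t,s}:=\delta y_{t,s}-y'_{s}\cdot\delta X_{t,s}$ (and $R^{z}$ likewise), a second-order Taylor expansion of $G$ about $(y_{s},z_{s-r})$ gives, after cancelling the level-$1$ terms against $m'_{s}\cdot\delta X_{t,s}+\bar m'_{s}\cdot\delta X_{t-r,s-r}$,
\[
\bar R^{m}_{t,s}=D_{x_{1}}G(y_{s},z_{s-r})\,R^{y}_{t,s}+D_{x_{2}}G(y_{s},z_{s-r})\,R^{z}_{t-r,s-r}+Q_{t,s},
\]
with $Q_{t,s}=\int_{0}^{1}(1-\tau)\big[D^{2}_{x_{1}}G\,(\delta y_{t,s})^{\otimes 2}+2D_{x_{1}}D_{x_{2}}G\,(\delta y_{t,s}\otimes\delta z_{t-r,s-r})+D^{2}_{x_{2}}G\,(\delta z_{t-r,s-r})^{\otimes 2}\big]\,d\tau$, the derivatives evaluated at the interpolated point; subtracting the analogous expansion for $\bar R^{l}$ and telescoping each of the three groups splits $\bar R^{m}-\bar R^{l}$ into terms of the form $(\text{frozen-time difference of a composite})\times(R^{\cdot}\text{ or a quadratic increment})$ and $(\text{composite})\times(R^{y}-R^{u},\ R^{z}-R^{v}\text{ or a difference of quadratic increments})$, where in the last group one also uses $(\delta y)^{\otimes 2}-(\delta u)^{\otimes 2}=(\delta y+\delta u)\otimes(\delta y-\delta u)$ and its mixed counterpart. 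Every such term is then bounded simultaneously at the $\alpha$- and $2\alpha$-scales; the passage between these scales, and the splitting of $\|R^{y}\|,\|R^{z}\|$ into their $\alpha$- and $2\alpha$-parts, is carried out with the interpolation inequality~\eqref{eq053}, equivalently~\eqref{eq352}, and the regularity loss $-\sigma$ is absorbed by the mapping properties $G,D^{j}G:\mathcal{B}^{2}_{\rho}\to\mathcal{B}_{\rho-\sigma}$.

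I expect the main obstacle to be precisely this last step: correctly matching the spatial regularity index (which $\mathcal{B}_{\rho}$) and the time-Hölder exponent ($\alpha$ versus $2\alpha$) in each of the numerous telescoped pieces — most delicately the mixed quadratic terms in $\delta y\otimes\delta z$ and the frozen-time factors $D_{x_{i}}G(y_{s},z_{s-r})-D_{x_{i}}G(u_{s},v_{s-r})$ — so that~\eqref{eq053} applies with the right exponents and the $-\sigma$ loss lands in the correct slot. The bookkeeping is heavy, but each individual bound is of exactly the kind already established in the proof of Lemma~\ref{lem001}. Finally I would collect the seven seminorm estimates, use $r>0$ and the compactness of $I_{1},I_{2}$ to absorb constants, and note that $\rho_{\alpha,I_{1}}(\mathbf{X})$ and $\rho_{\alpha,I_{2}}(\mathbf{X})$ enter at most quadratically, which yields the claimed estimate.
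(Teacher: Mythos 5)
Your proposal is correct and follows essentially the same route as the paper's Appendix~A.2 proof: bound the sup-norms of $m-l,\,m'-l',\,\bar m'-\bar l'$ via the fundamental theorem of calculus along a segment and boundedness of $D_{x_i}G,\,D^2 G$; handle $\delta(m'-l')$ by add-and-subtract telescoping; and treat $\bar R^{m}-\bar R^{l}$ by subtracting the second-order Taylor decomposition of $\bar R^{m}=\sum_{j=1}^{4}K^{j,m}$ (linear term $D_{x_1}G\circ R^{y}+D_{x_2}G\circ R^{z}$ plus quadratic remainder in $\delta y,\,\delta z$) from the analogous one for $\bar R^{l}$. The paper uses exactly this $K^{j,m}-K^{j,l}$ decomposition, and your observation about $(\delta y)^{\otimes 2}-(\delta u)^{\otimes 2}=(\delta y+\delta u)\otimes(\delta y-\delta u)$ and the interpolation bookkeeping fills in precisely the steps the paper elides with ``by the further arguments.''
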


		We will define the integral  of stochastic convolution with delay term.  In order to achieve this goal,  we  introduce some notations and recall sewing lemma (Theorem 4.1 in \cite{MR4299812}) as follows.
	
	Define 
	\begin{align*}
		\mathcal{J}^{\alpha}_{\theta}(I):&=\{f\in \mathcal{C}_{2}(I,\mathcal{B}_{\theta}):  f_{t,s}=f^{1}_{t,s}+f^{2}_{t,s}~\text{and}~ \delta f_{t,u,s}=f^{3}_{t,u,s}+f^{4}_{t,u,s}, ~\text{for}~s,u,t\in I,\\
		&~~~\text{where}~f^{1}\in \mathcal{C}_{2}^{\alpha}(I,\mathcal{B}_{\theta}),  f^{2}\in\mathcal{C}^{2\alpha}_{2}(I,\mathcal{B}_{\theta-\alpha}), f^{3}\in \mathcal{C}_{3}^{2\alpha,\alpha}(I,\mathcal{B}_{\theta-2\alpha}), f^{4}\in\mathcal{C}_{3}^{\alpha,2\alpha}(I,\mathcal{B}_{\theta-2\alpha}).
		\}
	\end{align*}
	The space $\mathcal{J}^{\alpha}_{\theta}$ is endowed with the norm:
	\begin{align*}
		\|f\|_{\mathcal{J}^{\alpha}_{\theta}}:= \inf_{f^{1},f^{2},f^{3},f^{4}}(\|f^{1}\|_{\alpha,\theta}+\|f^{2}\|_{2\alpha,\theta-\alpha}+\|f^{3}\|_{2\alpha,\alpha,\theta-2\alpha}+\|f^{4}\|_{\alpha,2\alpha,\theta-2\alpha}).
	\end{align*}
    \begin{Lemma}[Sewing lemma]\label{lem002}
    	Let  $I=[0,T]\in\mathbb{R}$, $\alpha\in (\frac{1}{3},\frac{1}{2}]$, $\theta\in\mathbb{R}$. There exists a unique continuous linear map $\mathcal{K}:\mathcal{J}^{\alpha}_{\theta}(I)\rightarrow \mathcal{C}^{0,\alpha}_{\theta}(I)$  such that the followings hold:
    	
    	$(i)$ The map $\mathcal{K}$ is defined by
    	\begin{align*}
    		\mathcal{K}_{0}(f):&=0,~~~~~~~~~~~~~~~~~~~~~~f\in \mathcal{J}^{\alpha}_{\theta}(I),   \\
    		\mathcal{K}_{t}(f):&=\lim_{|\mathcal{P}|\rightarrow 0}\sum_{[u,v]\in\mathcal{P}}S_{t-u}f_{v,u},~f\in \mathcal{J}^{\alpha}_{\theta}(I), t\in(0,T],
    	\end{align*}
    	where  $|\mathcal{P}|$ denotes the length of the largest element of a partition $\mathcal{P}$ of $[0, t]$, and such equality holds in sense of topology of $\mathcal{B}_{\theta-2\alpha}$.
    	
    	$(ii)$ For $0\leq \beta<3\alpha$, $t,s\in [0,T]$, we have 
    	\begin{align*}
    		\|\mathcal{K}_{t}(f)-S_{t,s}\mathcal{K}_{s}(f)-S_{t,s}f_{t,s}\|_{\theta-2\alpha+\beta}\lesssim\|f\|_{\mathcal{J}_{\theta}^{\alpha}}|t-s|^{3\alpha-\beta}.
    	\end{align*}
    \end{Lemma}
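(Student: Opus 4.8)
The statement is a semigroup analogue of the Gubinelli--Young sewing lemma, and I would prove it by the classical Riemann-sum construction, with the two nonstandard features --- the semigroup weights and the off-diagonal regularity built into the decompositions of $f$ and $\delta f$ --- absorbed by the smoothing estimates \eqref{eq050}. For a partition $\mathcal{Q}=\{a=p_{0}<p_{1}<\cdots<p_{N}=b\}$ of a subinterval $[a,b]\subseteq[0,T]$ set
\[
I^{a,b}_{\mathcal{Q}}(f):=\sum_{i=0}^{N-1}S_{b-p_{i}}\,f_{p_{i+1},p_{i}},
\]
so the candidate object is $\mathcal{K}_{t}(f):=\lim_{|\mathcal{P}|\to0}I^{0,t}_{\mathcal{P}}(f)$, $\mathcal{K}_{0}(f):=0$. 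The plan has five steps: \emph{(a)} the one-interval a priori bound $\|I^{a,b}_{\mathcal{Q}}(f)-S_{b-a}f_{b,a}\|_{\theta-2\alpha}\lesssim\|f\|_{\mathcal{J}^{\alpha}_{\theta}}(b-a)^{3\alpha}$, uniformly in $\mathcal{Q}$; \emph{(b)} the Cauchy property of $I^{0,t}_{\mathcal{P}}(f)$ in $\mathcal{B}_{\theta-2\alpha}$, which defines $\mathcal{K}_{t}(f)$; \emph{(c)} the estimate (ii), first for $\beta=0$ and then for all $\beta\in[0,3\alpha)$; \emph{(d)} $\mathcal{K}(f)\in\mathcal{C}^{0,\alpha}_{\theta}(I)$ and boundedness of $\mathcal{K}$; \emph{(e)} uniqueness.

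For \emph{(a)} I would induct on $\#\mathcal{Q}$, removing interior points one at a time. Fix a decomposition $f=f^{1}+f^{2}$, $\delta f=f^{3}+f^{4}$ realizing $\|f\|_{\mathcal{J}^{\alpha}_{\theta}}$ up to a factor $2$, and pick by pigeonhole an interior point $p_{j}$ with $p_{j+1}-p_{j-1}\le 2(b-a)/(\#\mathcal{Q}-2)$; then $f_{p_{j+1},p_{j-1}}=f_{p_{j+1},p_{j}}+f_{p_{j},p_{j-1}}+\delta f_{p_{j+1},p_{j},p_{j-1}}$ together with $S_{b-p_{j-1}}=S_{b-p_{j}}S_{p_{j}-p_{j-1}}$ give
\[
I^{a,b}_{\mathcal{Q}}(f)-I^{a,b}_{\mathcal{Q}\setminus\{p_{j}\}}(f)=S_{b-p_{j}}\bigl(I-S_{p_{j}-p_{j-1}}\bigr)f_{p_{j+1},p_{j}}-S_{b-p_{j-1}}\,\delta f_{p_{j+1},p_{j},p_{j-1}}.
\]
Using $\|(I-S_{h})x\|_{\theta-2\alpha}\lesssim h^{2\alpha}\|x\|_{\theta}$ for $x\in\mathcal{B}_{\theta}$ (here $2\alpha\le1$) and $\|(I-S_{h})x\|_{\theta-2\alpha}\lesssim h^{\alpha}\|x\|_{\theta-\alpha}$ for $x\in\mathcal{B}_{\theta-\alpha}$, both from the second bound in \eqref{eq050}, together with the uniform boundedness of $S_{\tau}$ on $\mathcal{B}_{\theta-2\alpha}$, each term on the right is $\lesssim\|f\|_{\mathcal{J}^{\alpha}_{\theta}}(p_{j+1}-p_{j-1})^{3\alpha}$; iterating down to $\{a,b\}$ and summing the convergent series $\sum_{k\ge1}k^{-3\alpha}$ (finite precisely because $\alpha>\tfrac13$) proves \emph{(a)}. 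Running the same computation block by block on a refinement $\mathcal{P}'\supseteq\mathcal{P}$ of $[0,t]$ yields $\|I^{0,t}_{\mathcal{P}'}(f)-I^{0,t}_{\mathcal{P}}(f)\|_{\theta-2\alpha}\lesssim\|f\|_{\mathcal{J}^{\alpha}_{\theta}}\,t\,|\mathcal{P}|^{3\alpha-1}$, hence the Cauchy property; this gives \emph{(b)}, with $\mathcal{K}$ visibly linear and $\mathcal{K}_{0}(f)=0$.

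Property (ii) with $\beta=0$ I would obtain by inserting $s$ into the partitions of $[0,t]$: the block over $[0,s]$ factors as $S_{t-s}I^{0,s}_{\cdot}(f)\to S_{t-s}\mathcal{K}_{s}(f)$, while the block over $[s,t]$ converges to some $\widetilde{\mathcal{K}}^{s,t}$ with $\|\widetilde{\mathcal{K}}^{s,t}-S_{t-s}f_{t,s}\|_{\theta-2\alpha}\lesssim\|f\|_{\mathcal{J}^{\alpha}_{\theta}}(t-s)^{3\alpha}$ by \emph{(a)}; so $\mathcal{K}_{t}(f)=S_{t-s}\mathcal{K}_{s}(f)+\widetilde{\mathcal{K}}^{s,t}$. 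The upgrade to general $\beta$ is the step I expect to be the main obstacle. It uses the restricted additivity $\widetilde{\mathcal{K}}^{s,t}=S_{t-u}\widetilde{\mathcal{K}}^{s,u}+\widetilde{\mathcal{K}}^{u,t}$ for $s\le u\le t$ (coming from $I^{s,t}_{\mathcal{Q}}=S_{t-u}I^{s,u}_{\mathcal{Q}\cap[s,u]}+I^{u,t}_{\mathcal{Q}\cap[u,t]}$ when $u\in\mathcal{Q}$): writing $A^{s,t}:=\widetilde{\mathcal{K}}^{s,t}-S_{t-s}f_{t,s}$, this yields the Gubinelli-type identity
\[
A^{s,t}=S_{t-u}A^{s,u}+A^{u,t}+S_{t-u}\bigl(I-S_{u-s}\bigr)f_{t,u}-S_{t-s}\,\delta f_{t,u,s},
\]
and iterating at successive midpoints and telescoping (the generation-$n$ remainder $\sum_{[a,b]}S_{t-b}A^{a,b}$ tends to $0$ in $\mathcal{B}_{\theta-2\alpha}$ by \emph{(a)}) gives, with the sum running over all dyadic subintervals $[a,b]$ of $[s,t]$,
\[
A^{s,t}=\sum_{[a,b]}\Bigl(S_{t-\frac{a+b}{2}}\bigl(I-S_{\frac{b-a}{2}}\bigr)f_{b,\frac{a+b}{2}}-S_{t-a}\,\delta f_{b,\frac{a+b}{2},a}\Bigr).
\]
The crucial point is that the weight attached to the $[a,b]$-summand has, after composing its internal semigroup factors, a time parameter comparable to $t-a$; hence the first bound in \eqref{eq050} trades $\beta$ units of regularity for a factor $(t-a)^{-\beta}$, and since at generation $d$ there are $2^{d}$ intervals of length $(t-s)2^{-d}$ at distances $(2^{d}-k)(t-s)2^{-d}$ from $t$ $(k=0,\dots,2^{d}-1)$, the relevant double series $\sum_{d\ge0}2^{d(\beta-3\alpha)}\sum_{j=1}^{2^{d}}j^{-\beta}$ converges \emph{precisely} under the hypotheses $\alpha>\tfrac13$ and $\beta<3\alpha$, giving $\|A^{s,t}\|_{\theta-2\alpha+\beta}\lesssim\|f\|_{\mathcal{J}^{\alpha}_{\theta}}(t-s)^{3\alpha-\beta}$, i.e. (ii). (For $\beta>1$ one first writes $S_{\tau}=S_{\tau/2}S_{\tau/2}$ so as to apply \eqref{eq050} legitimately.)

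For \emph{(d)}, (ii) with $\beta=2\alpha$ and $s=0$ gives $\|\mathcal{K}_{t}(f)-S_{t}f_{t,0}\|_{\theta}\lesssim\|f\|_{\mathcal{J}^{\alpha}_{\theta}}t^{\alpha}$, which with $S_{t}f_{t,0}\in\mathcal{B}_{\theta}$ yields $\mathcal{K}_{t}(f)\in\mathcal{B}_{\theta}$ and $\|\mathcal{K}(f)\|_{\infty,\theta}\lesssim\|f\|_{\mathcal{J}^{\alpha}_{\theta}}$; combining (ii) for $\beta=\alpha$ and $\beta=2\alpha$ with $f=f^{1}+f^{2}$ and the strong continuity of $S$ on the scale $\{\mathcal{B}_{\theta}\}$ gives continuity into $\mathcal{B}_{\theta}$ and $\|\delta\mathcal{K}(f)\|_{\alpha,\theta-\alpha}\lesssim\|f\|_{\mathcal{J}^{\alpha}_{\theta}}$, so $\mathcal{K}(f)\in\mathcal{C}_{1}(I,\mathcal{B}_{\theta})\cap\mathcal{C}^{\alpha}_{1}(I,\mathcal{B}_{\theta-\alpha})=\mathcal{C}^{0,\alpha}_{\theta}(I)$; being linear with $\|\mathcal{K}(f)\|_{\mathcal{C}^{0,\alpha}_{\theta}}\lesssim\|f\|_{\mathcal{J}^{\alpha}_{\theta}}$, $\mathcal{K}$ is continuous. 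For \emph{(e)}, if $\mathcal{K},\widetilde{\mathcal{K}}$ both satisfy (i)--(ii) then $g:=\mathcal{K}(f)-\widetilde{\mathcal{K}}(f)$ has $g_{0}=0$, and by (ii) with $\beta=0$ the increment $D_{t,s}:=g_{t}-S_{t-s}g_{s}$ satisfies $\|D_{t,s}\|_{\theta-2\alpha}\lesssim\|f\|_{\mathcal{J}^{\alpha}_{\theta}}|t-s|^{3\alpha}$; since $D_{t,s}=D_{t,u}+S_{t-u}D_{u,s}$, telescoping over a partition $\mathcal{P}$ of $[0,t]$ gives $\|g_{t}\|_{\theta-2\alpha}=\|D_{t,0}\|_{\theta-2\alpha}\lesssim\|f\|_{\mathcal{J}^{\alpha}_{\theta}}\,t\,|\mathcal{P}|^{3\alpha-1}\to0$, whence $g\equiv0$ by injectivity of $\mathcal{B}_{\theta}\hookrightarrow\mathcal{B}_{\theta-2\alpha}$. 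Together these steps deliver existence and uniqueness.
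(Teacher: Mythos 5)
The paper itself does not prove this lemma: it is stated and cited as Theorem 4.1 of \cite{MR4299812}, so there is no in-paper argument to compare against. Your reconstruction is, however, the standard adaptation of Gubinelli's sewing lemma to the semigroup setting, and it appears correct in every essential step: the one-interval a priori bound by successive removal of a well-chosen interior point, with the two losses $\|(I-S_{h})x\|_{\theta-2\alpha}\lesssim h^{2\alpha}\|x\|_{\theta}$ and $\lesssim h^{\alpha}\|x\|_{\theta-\alpha}$ matched to the two summands $f^{1},f^{2}$ of $f$ and the three-point term absorbed by $f^{3},f^{4}$; the Cauchy property by blocking refinements over the coarse intervals and pulling out $S_{t-u_{i+1}}$; the $\beta=0$ case of (ii) by inserting $s$; the upgrade to general $\beta<3\alpha$ via the one-sided additivity of $\widetilde{\mathcal{K}}^{s,t}$, the Gubinelli-type identity for $A^{s,t}$, and the accounting that assigns to the dyadic interval $[a,b]$ the accumulated weight $S_{t-b}$ and hence a smoothing gain comparable to $(t-a)^{-\beta}$, giving the double series $\sum_{d}2^{d(\beta-3\alpha)}\sum_{j\le 2^{d}}j^{-\beta}$, which converges under $\alpha>\tfrac13$ and $\beta<3\alpha$ (the latter being the decisive constraint once $\beta\ge1$); and the uniqueness via the cocycle relation $D_{t,s}=D_{t,u}+S_{t-u}D_{u,s}$ together with injectivity of $\mathcal{B}_{\theta}\hookrightarrow\mathcal{B}_{\theta-2\alpha}$. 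Your parenthetical about factoring $S_{\tau}=S_{\tau/2}S_{\tau/2}$ when $\beta>1$ is indeed necessary to stay within the range $\sigma\in[0,1]$ of \eqref{eq050}. Whether or not the cited reference proceeds by exactly this dyadic telescoping, what you have written is a complete and self-contained argument for the statement as given here.
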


	The next lemma is devoted to the definition of  rough convolution with respect to delayed $\alpha$-H\"{o}lder path $\bar{\mathbf{X}}=(X,\mathbb{X},\mathbb{X}(-r))$. 
	
	\begin{Lemma}[Rough convolution]\label{lem003}
		Let $\bar{\mathbf{X}}=(X, \mathbb{X},\mathbb{X}(-r))\in \bar{\mathcal{C}}^{\alpha}(I,\mathbb{R}^d)$ with $I=[0,T]$, $\alpha\in (\frac{1}{3},\frac{1}{2}]$ and  $r>0$. Let delayed controlled path $(y^{i}, y^{i,\prime},{\bar{y}}^{i,\prime})$  based on $\bar{\mathbf{X}}$ belong to  $\mathcal{D}^{2\alpha}_{\bar{\mathbf{X}},\theta}(I)$ for $i=1,\cdots,d$, $\theta\in\mathbb{R}$.  Then, we have the  following results:
		
		$(i)$ For $t\in [0,T]$, the rough convolution 
		\begin{align*}
			\int^{t}_{0}S_{t,s} y_{s}\cdot\textup{d}\bar{\mathbf{X}}_{s}:&=\lim_{|\mathcal{P}|\rightarrow 0}\sum_{[u,v]\in\mathcal{P}} S_{t,u}[y_{u}\cdot\delta X_{v,u}+y^{\prime}_{u}:\mathbb{X}_{v,u}+   \bar{y}^{\prime}_{u}:\mathbb{X}(-r)_{v,u}],\\
			&=\lim_{|\mathcal{P}|\rightarrow 0}\sum_{[u,v]\in\mathcal{P}} S_{t,u}\big\{y_{u}\cdot\delta X_{v,u}+\sum^{d}_{i,j=1}[ y^{i,j,\prime}_{u}\mathbb{X}^{j,i}_{v,u}+   \bar{y}^{i,j,\prime}_{u}\mathbb{X}^{j,i}(-r)_{v,u}]\big\},
		\end{align*}
		belongs to $\mathcal{B}_{\theta-2\alpha}$, where $|\mathcal{P}|$ denotes the length of the largest element of a partition $\mathcal{P}$ of $[0, t]$, $(y^{i,j,\prime})$ and $(\bar{y}^{i,j,\prime})$  denotes the $j$-th element of $y^{i,\prime}$ and $ y^{i,\prime}$, respectively.
		
		$(ii)$ For $0\leq \beta<3\alpha$, $t,s\in [0,T]$, we have
		\begin{align*}
			&~~~\|\int^{t}_{s}S_{t,u} y_{u}\cdot\textup{d}\bar{\mathbf{X}}_{u}- S_{t,s}[y_{s}\cdot\delta X_{t,s}+y^{\prime}_{s}:\mathbb{X}_{t,s}+   \bar{y}^{\prime}_{s}:\mathbb{X}(-r)_{t,s}]\|_{\theta-2\alpha+\beta}\\
			&\lesssim  \|y\cdot\delta X+y^{\prime}:\mathbb{X}+   \bar{y}^{\prime}:\mathbb{X}(-r)\|_{\mathcal{J}^{\alpha}_{\theta} }(t-s)^{3\alpha-\beta}\\
			& \lesssim  \rho_{\alpha,I}(\bar{\mathbf{X}}) \|y,y',\bar{y}'\|_{\bar{\mathbf{X}},2\alpha,\theta}(t-s)^{3\alpha-\beta}.
		\end{align*} 
	\end{Lemma}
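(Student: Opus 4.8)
Below is the plan I would follow; it reduces the whole statement to the sewing lemma (Lemma~\ref{lem002}) once the correct local germ has been identified and its algebraic decomposition carried out.

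\medskip
\noindent\textbf{Step 1: identify the germ and reduce to sewing.} I would set, for $(t,s)\in\Delta_{2,I}$,
\[
\Xi_{t,s}:=y_{s}\cdot\delta X_{t,s}+y'_{s}:\mathbb{X}_{t,s}+\bar{y}'_{s}:\mathbb{X}(-r)_{t,s},
\]
which is exactly the summand in the defining Riemann sum, taken with base point the left endpoint. The plan is to show $\Xi\in\mathcal{J}^{\alpha}_{\theta}(I)$; then part $(i)$ of Lemma~\ref{lem002} gives at once that $\mathcal{K}_{t}(\Xi)=\lim_{|\mathcal{P}|\to0}\sum_{[u,v]\in\mathcal{P}}S_{t-u}\Xi_{v,u}$ exists in $\mathcal{B}_{\theta-2\alpha}$ and that $\mathcal{K}(\Xi)\in\mathcal{C}^{0,\alpha}_{\theta}(I)$, which is assertion $(i)$ with $\int_{0}^{t}S_{t,s}y_{s}\cdot\textup{d}\bar{\mathbf{X}}_{s}=\mathcal{K}_{t}(\Xi)$.

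\medskip
\noindent\textbf{Step 2: the two decompositions of $\Xi$.} First I would write $\Xi_{t,s}=f^{1}_{t,s}+f^{2}_{t,s}$ with $f^{1}_{t,s}:=y_{s}\cdot\delta X_{t,s}$ and $f^{2}_{t,s}:=y'_{s}:\mathbb{X}_{t,s}+\bar{y}'_{s}:\mathbb{X}(-r)_{t,s}$: since $y\in\mathcal{C}_{1}(I,\mathcal{B}_{\theta})$ and $\delta X\in\mathcal{C}_{1}^{\alpha}$ one gets $f^{1}\in\mathcal{C}_{2}^{\alpha}(I,\mathcal{B}_{\theta})$, and since $y',\bar{y}'\in\mathcal{C}_{1}(I,\mathcal{B}_{\theta-\alpha})$ while $\mathbb{X},\mathbb{X}(-r)\in\mathcal{C}_{2}^{2\alpha}$ one gets $f^{2}\in\mathcal{C}_{2}^{2\alpha}(I,\mathcal{B}_{\theta-\alpha})$. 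Then I would compute $\delta\Xi_{t,u,s}$. Using $\delta X_{t,s}=\delta X_{t,u}+\delta X_{u,s}$ one has $\delta(y\cdot\delta X)_{t,u,s}=-\delta y_{u,s}\cdot\delta X_{t,u}$, and inserting $\delta y_{u,s}=y'_{s}\cdot\delta X_{u,s}+\bar{y}'_{s}\cdot\delta X_{u-r,s-r}+\bar{R}^{y}_{u,s}$ from Definition~\ref{def002} produces three contributions. On the other hand, Chen's relations of Definition~\ref{def1} give $\delta(y':\mathbb{X})_{t,u,s}=-\delta y'_{u,s}:\mathbb{X}_{t,u}+y'_{s}:(X_{u,s}\otimes X_{t,u})$ and $\delta(\bar{y}':\mathbb{X}(-r))_{t,u,s}=-\delta\bar{y}'_{u,s}:\mathbb{X}(-r)_{t,u}+\bar{y}'_{s}:(\delta X_{u-r,s-r}\otimes\delta X_{t,u})$. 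Matching the index contractions ($y':\mathbb{X}=\sum_{i,j}y^{i,j,\prime}\mathbb{X}^{j,i}$, similarly for $\bar{y}'$) shows $y'_{s}:(X_{u,s}\otimes X_{t,u})=(y'_{s}\cdot\delta X_{u,s})\cdot\delta X_{t,u}$ and $\bar{y}'_{s}:(\delta X_{u-r,s-r}\otimes\delta X_{t,u})=(\bar{y}'_{s}\cdot\delta X_{u-r,s-r})\cdot\delta X_{t,u}$, so two of the three contributions cancel and there remains
\[
\delta\Xi_{t,u,s}=-\bar{R}^{y}_{u,s}\cdot\delta X_{t,u}-\delta y'_{u,s}:\mathbb{X}_{t,u}-\delta\bar{y}'_{u,s}:\mathbb{X}(-r)_{t,u}=:f^{3}_{t,u,s}+f^{4}_{t,u,s},
\]
with $f^{3}:=-\bar{R}^{y}\cdot\delta X$ and $f^{4}:=-\delta y':\mathbb{X}-\delta\bar{y}':\mathbb{X}(-r)$.

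\medskip
\noindent\textbf{Step 3: regularity count and conclusion.} From $\|\bar{R}^{y}_{u,s}\|_{\theta-2\alpha}\lesssim\|\bar{R}^{y}\|_{2\alpha,\theta-2\alpha}(u-s)^{2\alpha}$ and $|\delta X_{t,u}|\lesssim\|\delta X\|_{\alpha}(t-u)^{\alpha}$ one gets $f^{3}\in\mathcal{C}_{3}^{2\alpha,\alpha}(I,\mathcal{B}_{\theta-2\alpha})$, and from $y',\bar{y}'\in\mathcal{C}_{1}^{\alpha}(I,\mathcal{B}_{\theta-2\alpha})$ (this is part of the definition of $\mathcal{C}^{0,\alpha}_{\theta-\alpha}$) together with $\mathbb{X},\mathbb{X}(-r)\in\mathcal{C}_{2}^{2\alpha}$ one gets $f^{4}\in\mathcal{C}_{3}^{\alpha,2\alpha}(I,\mathcal{B}_{\theta-2\alpha})$; hence $\Xi\in\mathcal{J}^{\alpha}_{\theta}(I)$. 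Since each of $\|f^{1}\|_{\alpha,\theta}$, $\|f^{2}\|_{2\alpha,\theta-\alpha}$, $\|f^{3}\|_{2\alpha,\alpha,\theta-2\alpha}$, $\|f^{4}\|_{\alpha,2\alpha,\theta-2\alpha}$ is a product of one component of $\rho_{\alpha,I}(\bar{\mathbf{X}})$ with one component of $\|y,y',\bar{y}'\|_{\bar{\mathbf{X}},2\alpha,\theta}$, I get $\|\Xi\|_{\mathcal{J}^{\alpha}_{\theta}}\lesssim\rho_{\alpha,I}(\bar{\mathbf{X}})\,\|y,y',\bar{y}'\|_{\bar{\mathbf{X}},2\alpha,\theta}$, the second inequality in $(ii)$. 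For the first inequality in $(ii)$, splitting a partition of $[0,t]$ at the point $s$ and passing to the limit yields the cocycle identity $\mathcal{K}_{t}(\Xi)=S_{t,s}\mathcal{K}_{s}(\Xi)+\int_{s}^{t}S_{t,u}y_{u}\cdot\textup{d}\bar{\mathbf{X}}_{u}$, so that
\[
\int_{s}^{t}S_{t,u}y_{u}\cdot\textup{d}\bar{\mathbf{X}}_{u}-S_{t,s}\big[y_{s}\cdot\delta X_{t,s}+y'_{s}:\mathbb{X}_{t,s}+\bar{y}'_{s}:\mathbb{X}(-r)_{t,s}\big]=\mathcal{K}_{t}(\Xi)-S_{t,s}\mathcal{K}_{s}(\Xi)-S_{t,s}\Xi_{t,s},
\]
and part $(ii)$ of Lemma~\ref{lem002} bounds the right-hand side in $\mathcal{B}_{\theta-2\alpha+\beta}$ by $\|\Xi\|_{\mathcal{J}^{\alpha}_{\theta}}|t-s|^{3\alpha-\beta}$ for $0\leq\beta<3\alpha$.

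\medskip
The one genuinely delicate step is the algebraic identity for $\delta\Xi_{t,u,s}$ in Step~2: one has to carry the tensor index contractions and the shifted increments in the delayed Chen relation $\delta\mathbb{X}(-r)_{t,u,s}=\delta X_{u-r,s-r}\otimes\delta X_{t,u}$ through the computation, and check that the two quadratic terms generated by expanding $\bar{R}^{y}$ inside $\delta(y\cdot\delta X)$ are cancelled \emph{exactly} — including the delay shift $u-r,s-r$ — by the quadratic terms coming from Chen's relations for $\mathbb{X}$ and $\mathbb{X}(-r)$. Everything past that identity is routine Hölder/interpolation estimation plus a direct appeal to the sewing lemma.
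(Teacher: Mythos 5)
Your proposal is correct and follows the same route as the paper: identify the germ $\Xi_{t,s}=y_s\cdot\delta X_{t,s}+y'_s:\mathbb{X}_{t,s}+\bar{y}'_s:\mathbb{X}(-r)_{t,s}$, decompose it and $\delta\Xi$ to place it in $\mathcal{J}^{\alpha}_{\theta}(I)$ with the bound $\|\Xi\|_{\mathcal{J}^{\alpha}_{\theta}}\lesssim\rho_{\alpha,I}(\bar{\mathbf{X}})\|y,y',\bar{y}'\|_{\bar{\mathbf{X}},2\alpha,\theta}$, and invoke the sewing lemma (Lemma~\ref{lem002}). Your Step 2 in fact spells out the tensor-index cancellation that yields $\delta\Xi_{t,u,s}=-\bar{R}^{y}_{u,s}\cdot\delta X_{t,u}-\delta y'_{u,s}:\mathbb{X}_{t,u}-\delta\bar{y}'_{u,s}:\mathbb{X}(-r)_{t,u}$, which the paper states without derivation, but the argument is otherwise identical.
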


	Next, we proceed to prove the rough convolution $\int^{t}_{0}S_{t,s} y_{s}\cdot\textup{d}\bar{\mathbf{X}}_{s}$  is  a controlled path.

			\begin{Lemma}\label{lem006}
				Let $\bar{\mathbf{X}}=(X, \mathbb{X},\mathbb{X}(-r))\in \bar{\mathcal{C}}^{\alpha}(I,\mathbb{R}^d)$ with $I=[0,T]$, $T\leq 1$, $\alpha\in (\frac{1}{3},\frac{1}{2}]$, $r>0$,  $\tilde{\alpha}\in(0,\alpha]$, and $\sigma\in(0,\tilde{\alpha})$. Let delayed controlled path $(y^{i}, y^{i,\prime},{\bar{y}}^{i,\prime})$  based on $\bar{\mathbf{X}}$ belong to  $\mathcal{D}^{2\alpha}_{\bar{\mathbf{X}},\theta}(I)$ for  $\theta\in\mathbb{R}$, $i=1,\cdots,d$. Set
				\begin{align*}
					\zeta_{t}= 	\int^{t}_{0}S_{t,s} y_{s}\cdot\textup{d}\bar{\mathbf{X}}_{s}.
				\end{align*}
				Then, the followings hold:
				
				$(i)$ $(\zeta,\zeta')$ is controlled path based  $\mathbf{X}$ belong to  $\mathcal{D}^{2\alpha}_{\mathbf{X},\theta+\sigma}(I)$ with $\zeta'=y$.
				
				$(ii)$ $\|\zeta,\zeta'\|_{\mathbf{X},2\tilde{\alpha}, \theta+\sigma}\lesssim (1+\rho_{\alpha, I}(\bar{\mathbf{X}}))  \|y,y',\bar{y}'\|_{\bar{\mathbf{X}},2\tilde{\alpha},\theta}T^{\lambda_{0}}+\|y_{0}\|_{\theta},$
				where $\lambda_{0}:=(\alpha-\tilde{\alpha})\wedge(\tilde{\alpha}-\sigma)$.
				
				$(iii)$ $\|\zeta,\zeta'\|_{\mathbf{X},2\alpha, \theta+\sigma}\lesssim (1+\rho_{\alpha, I}(\bar{\mathbf{X}}))[ \|y,y',\bar{y}'\|_{\bar{\mathbf{X}},2\alpha,\theta}T^{\alpha-\sigma}+\|y'_{0}\|_{\theta-\alpha}+\|\bar{y}'_{0}\|_{\theta-\alpha}]+\|y_{0}\|_{\theta}$.
			\end{Lemma}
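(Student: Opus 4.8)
\textbf{Proof plan for Lemma \ref{lem006}.}
The plan is to verify that $(\zeta,\zeta')$ with $\zeta'=y$ is a controlled path by estimating the Gubinelli derivative and the remainder term using the sewing lemma (Lemma \ref{lem002}, or more directly the estimate in Lemma \ref{lem003}(ii)), and then to track the dependence on $T$ carefully so as to extract the small powers $T^{\lambda_0}$, $T^{\alpha-\sigma}$ claimed in parts (ii) and (iii). First I would set up the candidate decomposition: define the remainder
\begin{align*}
R^{\zeta}_{t,s}=\delta\zeta_{t,s}-y_s\cdot\delta X_{t,s}
\end{align*}
and split it as $R^{\zeta}_{t,s}=(\delta\zeta_{t,s}-S_{t,s}\zeta_s+S_{t,s}\zeta_s-\zeta_s)+(\zeta_s-S_{t,s}\zeta_s$-part)$-y_s\cdot\delta X_{t,s}$; more precisely, I would write
\begin{align*}
\zeta_t-S_{t,s}\zeta_s=\int_s^t S_{t,u}y_u\cdot\textup{d}\bar{\mathbf X}_u,
\end{align*}
which follows from the very definition of the convolution as a limit of compensated Riemann sums, and then apply Lemma \ref{lem003}(ii) with a suitable $\beta$ to control this against $S_{t,s}[y_s\cdot\delta X_{t,s}+y'_s:\mathbb X_{t,s}+\bar y'_s:\mathbb X(-r)_{t,s}]$. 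The leftover pieces are then: the difference $(S_{t,s}-\mathrm{Id})(y_s\cdot\delta X_{t,s})$, which costs a factor $t^{\sigma}$ by the second half of \eqref{eq050} and hence contributes $(t-s)^{\alpha+\sigma}$-regularity (acceptable since $\alpha+\sigma$ can be absorbed into $\alpha$ with a $T^\sigma$ gain, or split into $\alpha$ and $2\alpha$ pieces), plus the genuinely $2\alpha$-order terms $S_{t,s}y'_s:\mathbb X_{t,s}$ and $S_{t,s}\bar y'_s:\mathbb X(-r)_{t,s}$, each bounded by $\rho_{\alpha,I}(\bar{\mathbf X})\,\|y',\bar y'\|_\infty\,(t-s)^{2\alpha}$ with the appropriate smoothing loss. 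Collecting these shows $R^{\zeta}\in\mathcal C^{\alpha,2\alpha}_{\theta+\sigma}(I)$ and establishes part (i) with $\zeta'=y$ (note that $y'=\zeta'$ is automatically in $\mathcal C^{0,\alpha}_{\theta+\sigma-\alpha}$ since $y\in\mathcal C_1(I,\mathcal B_\theta)$ has the Hölder regularity recorded in \eqref{eq103}).

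For part (ii) I would run the same estimates but bookkeep every interval length as a power of $T$: the supremum norm $\|\zeta\|_{\infty,\theta+\sigma}$ is $\sup_t\|\int_0^t S_{t,s}y_s\cdot\textup{d}\bar{\mathbf X}_s\|_{\theta+\sigma}$, bounded via Lemma \ref{lem003}(ii) with $\beta=2\alpha+\sigma$ (using $\sigma<\tilde\alpha\le\alpha$ so that $3\alpha-\beta=\alpha-\sigma>0$) by $\rho_{\alpha,I}(\bar{\mathbf X})\|y,y',\bar y'\|_{\bar{\mathbf X},2\alpha,\theta}\,T^{\alpha-\sigma}$ plus the boundary term $S_{t,0}[y_0\cdot\delta X_{t,0}+\dots]$ whose dominant piece $\|S_{t,0}y_0\|_{\theta+\sigma}\cdot$ — wait, actually $y_0\cdot\delta X_{t,0}$ lives at regularity $\theta$ so $\|S_{t,0}y_0\cdot\delta X_{t,0}\|_{\theta+\sigma}\lesssim t^{-\sigma}t^{\alpha}\|y_0\|_\theta\cdot\rho_{\alpha,I}(\bar{\mathbf X})$ which is fine, but the cleanest bookkeeping keeps $\|y_0\|_\theta$ as a standalone term since the iteration scheme in Section 4 needs it; the $\mathbb X$-terms similarly give $t^{-\sigma}t^{2\alpha}$ contributions absorbable into $T^{\lambda_0}$. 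The Hölder seminorm of $\zeta$ at level $\theta+\sigma-\tilde\alpha$ and the two remainder seminorms at levels $\theta+\sigma-\tilde\alpha$ and $\theta+\sigma-2\tilde\alpha$ are handled identically, at each stage trading a genuine regularity exponent ($\alpha$ or $2\alpha$) against the required one ($\tilde\alpha$ or $2\tilde\alpha$) and picking up $(t-s)^{\alpha-\tilde\alpha}$ or $(t-s)^{2(\alpha-\tilde\alpha)}\le T^{\cdots}$, while the smoothing-in-space cost $t^{-\sigma}$ forces the other factor $t^{\tilde\alpha-\sigma}$ or similar; the minimum of all these exponents is exactly $\lambda_0=(\alpha-\tilde\alpha)\wedge(\tilde\alpha-\sigma)$. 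Part (iii) is the special case $\tilde\alpha=\alpha$, where the Hölder-trade term $(t-s)^{\alpha-\tilde\alpha}=1$ no longer gives decay, so one cannot pull out $T^{\lambda_0}$ from those pieces; instead one keeps the full norm with $T^{\alpha-\sigma}$ on the convolution part and retains the boundary contributions $\|y'_0\|_{\theta-\alpha}+\|\bar y'_0\|_{\theta-\alpha}$ (coming from estimating $S_{t,s}y'_s:\mathbb X_{t,s}$ etc.\ where $y'_s=y'_0+\delta y'_{s,0}$ and only the $\delta$-part carries $s^{\tilde\alpha}$ decay).

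The main obstacle I anticipate is the simultaneous control of the two competing effects: the analytic-semigroup smoothing estimate \eqref{eq050} which on the one hand \emph{loses} spatial regularity at the price of a blow-up $t^{-\sigma}$ near $s=0$, and on the other hand the sewing/convolution estimate which needs enough \emph{time} regularity ($3\alpha-\beta>0$) to converge — one must choose $\beta$ in Lemma \ref{lem003}(ii) inside the window $[0,3\alpha)$ so that the resulting spatial index $\theta-2\alpha+\beta$ reaches up to $\theta+\sigma$ (forcing $\beta\ge 2\alpha+\sigma$, whence $3\alpha-\beta\le\alpha-\sigma$) while keeping the time exponent strictly positive; this is precisely why the hypothesis $\sigma\in(0,\tilde\alpha)$ with $\tilde\alpha\le\alpha$ is imposed. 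A secondary technical nuisance is verifying that $\zeta'=y$ genuinely qualifies as a Gubinelli derivative, i.e.\ that $y\in\mathcal C^{0,\alpha}_{\theta+\sigma-\alpha}(I)$ — but since $y$ already lies in $\mathcal C_1(I,\mathcal B_\theta)$ and $\theta=(\theta+\sigma)-\sigma\ge(\theta+\sigma)-\alpha$, the embedding $\mathcal B_\theta\hookrightarrow\mathcal B_{\theta+\sigma-\alpha}$ together with \eqref{eq103} for the Hölder part settles it. Everything else is a (lengthy but routine) repetition of the estimates already packaged in Lemmas \ref{lem002} and \ref{lem003}, so the bulk of the write-up is bookkeeping of exponents rather than new ideas; accordingly the proof is deferred to Appendix \ref{A.1} as announced.
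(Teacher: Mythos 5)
Your proposal follows essentially the same route as the paper's proof: write $\zeta_t - S_{t,s}\zeta_s = \int_s^t S_{t,u}y_u\cdot\textup{d}\bar{\mathbf X}_u$, apply Lemma~\ref{lem003}(ii) with a suitable $\beta$, and decompose $R^\zeta_{t,s}$ into the sewing remainder $\eta_{t,s}$ plus the terms $(S_{t,s}-\mathrm{id})y_s\cdot\delta X_{t,s}$, $S_{t,s}y'_s:\mathbb X_{t,s}$, $S_{t,s}\bar y'_s:\mathbb X(-r)_{t,s}$, and $(S_{t,s}-\mathrm{id})\zeta_s$; the paper carries out exactly this with pieces labelled $\Lambda^1,\dots,\Lambda^5$, and tracks the $T$ powers by interpolation in the same way you describe. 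One small calibration worth noting: for part (ii) you propose $\beta=2\alpha+\sigma$, which yields a bound in the $2\alpha$-norm $\|y,y',\bar y'\|_{\bar{\mathbf X},2\alpha,\theta}$; the paper instead applies the sewing estimate at the $\tilde\alpha$-level with $\beta=2\tilde\alpha+\sigma$, which directly produces the $2\tilde\alpha$-norm appearing in the statement together with the exponent $T^{\tilde\alpha-\sigma}$, so that the other seminorm estimates only need to contribute the $T^{\alpha-\tilde\alpha}$ factor and $\lambda_0$ emerges as the minimum of the two.
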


			The following lemma is devoted to  the stability of the rough convolution.  		
			\begin{Lemma}\label{lem777}
				Under the same settings of Lemma $\ref{lem006}$, we further assume   
				$\bar{\mathbf{Y}}=(Y, \mathbb{Y},\mathbb{Y}(-r))\in \bar{\mathcal{C}}^{\alpha}(I,\mathbb{R}^d)$. Let delayed controlled path $(z^{i}, z^{i,\prime},{\bar{z}}^{i,\prime})$  based on $\bar{\mathbf{Y}}$ belong to  $\mathcal{D}^{2\alpha}_{\bar{\mathbf{Y}},\theta}(I)$ for  $\theta\in\mathbb{R}$, $i=1,\cdots,d$.   Set 
				\begin{align*}
					\chi_{t}= 	\int^{t}_{0}S_{t,s} z_{s}\cdot\textup{d}\bar{\mathbf{Y}}_{s}.
				\end{align*}
				Taking $\tilde{\alpha}\in(\sigma,\alpha)$ such that $3\tilde{\alpha}-2\alpha-\sigma>0$, we have 
				\begin{align*}
					&\rho_{2\tilde{\alpha},2\alpha, \theta+\sigma}(\zeta,\chi)\lesssim _{C} \rho_{2\tilde{\alpha},2\alpha, \theta}(y,z)T^{\lambda}+\rho_{\alpha,I}(\bar{\mathbf{X}},\bar{\mathbf{Y}})+   \|y(0)-z(0)\|_{\theta},
				\end{align*}
				where $C$ depends on $\|y,y',\bar{y}'\|_{\bar{\mathbf{X}},2\alpha,\theta}, \|z,z',\bar{z}'\|_{\bar{\mathbf{Y}},2\alpha,\theta}, \bar{\mathbf{X}}$ and $\bar{\mathbf{Y}}$, $\lambda=\min\{\alpha-\tilde{\alpha},\frac{\tilde{\alpha}(\alpha-\sigma)}{\alpha}, 3\tilde{\alpha}-2\alpha-\sigma\}$.

			\end{Lemma}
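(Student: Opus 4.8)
\textbf{Proof proposal for Lemma \ref{lem777}.}

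The plan is to run the same sewing-lemma machinery used in Lemma \ref{lem006}, but now applied to the \emph{difference} of the two integrands, and to track carefully how each error term in the sewing estimate scales with $T$. Write $\Xi_{t,s} := S_{t,s}[y_s\cdot\delta X_{t,s}+y'_s:\mathbb{X}_{t,s}+\bar y'_s:\mathbb{X}(-r)_{t,s}]$ and $\Theta_{t,s} := S_{t,s}[z_s\cdot\delta Y_{t,s}+z'_s:\mathbb{Y}_{t,s}+\bar z'_s:\mathbb{Y}(-r)_{t,s}]$, so that $\zeta_t = \mathcal{K}_t(\Xi)$ and $\chi_t = \mathcal{K}_t(\Theta)$ in the notation of the sewing lemma (Lemma \ref{lem002}), with $\zeta' = y$, $\chi' = z$. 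The quantity we must estimate, $\rho_{2\tilde\alpha,2\alpha,\theta+\sigma}(\zeta,\chi)$, decomposes into seven pieces, each of which is an increment or a remainder of $\zeta-\chi$; the remainder is $R^\zeta - R^\chi$ where $R^\zeta_{t,s} = \delta\zeta_{t,s} - y_s\cdot\delta X_{t,s}$ (recall $\zeta'=y$), and similarly for $\chi$.

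First I would decompose $\Xi - \Theta$ into a telescoping sum of five terms: one in which only $y-z$ appears (against $X$, $\mathbb{X}$, $\mathbb{X}(-r)$), one in which only $y'-z'$ appears, one in which only $\bar y'-\bar z'$ appears, and two in which $X-Y$, $\mathbb{X}-\mathbb{Y}$, $\mathbb{X}(-r)-\mathbb{Y}(-r)$ appear (multiplied by $z,z',\bar z'$-type factors). For each piece I would verify it lies in $\mathcal{J}^{\alpha}_{\theta}(I)$ exactly as in the proof of Lemma \ref{lem003}/\ref{lem006}: split $f_{t,s}$ into an $\alpha$-part and a $2\alpha$-part and split $\delta f_{t,u,s}$ using Chen's relations for $\bar{\mathbf{X}}$, $\bar{\mathbf{Y}}$ together with the controlled-path decompositions of $y,z$; the cross terms produced by $\delta(fg)=\delta f\cdot g + f\cdot\delta g$ absorb into the $\mathcal{J}^{\alpha}_{\theta}$-norm with a power of $T$ to spare. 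The $\mathcal{J}^{\alpha}_{\theta}$-norm of the ``$y-z$'' pieces is then controlled by $\rho_{\alpha,I}(\bar{\mathbf{X}})\,\rho_{2\tilde\alpha,2\alpha,\theta}(y,z)$ up to a factor $T^{\lambda}$ coming from trading H\"older exponents $\alpha\leftrightarrow\tilde\alpha$ and $2\alpha\leftrightarrow 2\tilde\alpha$ and from the smoothing $t^{-\sigma}$ in (\ref{eq050}); the ``$\bar{\mathbf{X}}-\bar{\mathbf{Y}}$'' pieces are controlled by $\rho_{\alpha,I}(\bar{\mathbf{X}},\bar{\mathbf{Y}})$ times a constant depending on $\|z,z',\bar z'\|_{\bar{\mathbf{Y}},2\alpha,\theta}$ and on the rough-path norms. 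Applying the quantitative sewing estimate, Lemma \ref{lem002}(ii), with $\beta$ chosen so that $\theta-2\alpha+\beta = \theta+\sigma$, i.e. $\beta = 2\alpha+\sigma$ (which satisfies $\beta<3\alpha$ since $\sigma<\alpha$, and gives the sewing gain $(t-s)^{3\alpha-\beta}=(t-s)^{\alpha-\sigma}$), yields the bound on $R^\zeta-R^\chi$ in the $\tilde\alpha$ and $2\tilde\alpha$ norms; the supremum-norm and $\zeta'-\chi'=y-z$ pieces are immediate from $\rho_{2\tilde\alpha,2\alpha,\theta}(y,z)$ plus the initial-value term $\|y(0)-z(0)\|_{\theta}$ (note $\zeta_0=\chi_0=0$, so $\|\zeta-\chi\|_{\infty,\theta+\sigma}$ is itself estimated by integrating the remainder, again picking up a power of $T$, except that a genuine non-decaying contribution enters through $\|y(0)-z(0)\|_\theta$ via the $\zeta'\cdot\delta X$ term).

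The main obstacle, as in Lemma \ref{lem006}, will be bookkeeping the exponents so that every term genuinely carries a \emph{positive} power of $T$ except the two that are supposed to survive ($\rho_{\alpha,I}(\bar{\mathbf{X}},\bar{\mathbf{Y}})$ and $\|y(0)-z(0)\|_\theta$). The delicate point is the $2\tilde\alpha$-remainder estimate: the sewing lemma only gives a gain of order $3\alpha$, and after subtracting $2\alpha$ for the target space and $\sigma$ for the regularisation one is left with $3\alpha-2\alpha-\sigma=\alpha-\sigma>0$ at H\"older exponent $\alpha$; to then drop down to exponent $2\tilde\alpha$ one interpolates, which consumes part of that gain and explains the constraint $3\tilde\alpha-2\alpha-\sigma>0$ and the shape of $\lambda=\min\{\alpha-\tilde\alpha,\frac{\tilde\alpha(\alpha-\sigma)}{\alpha},3\tilde\alpha-2\alpha-\sigma\}$. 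One must also be careful that the constant $C$ genuinely depends only on $\|y,y',\bar y'\|_{\bar{\mathbf{X}},2\alpha,\theta}$, $\|z,z',\bar z'\|_{\bar{\mathbf{Y}},2\alpha,\theta}$, $\bar{\mathbf{X}}$ and $\bar{\mathbf{Y}}$ and not on the finer $\tilde\alpha$-norms, which is why the telescoping must always peel off differences measured in the $2\tilde\alpha$-distance while leaving products measured in the $2\alpha$-norms. Once the exponent arithmetic is organised, the rest is a routine repetition of the estimates already carried out for Lemmas \ref{lem001}, \ref{lem005} and \ref{lem006}.
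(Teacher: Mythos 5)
Your proposal follows essentially the same route as the paper's proof: form the difference of the two sewing integrands, bound its $\mathcal{J}$-norm by $\rho_{2\tilde\alpha,2\alpha,\theta}(y,z)$ times a power of $T$ plus $\rho_{\alpha,I}(\bar{\mathbf{X}},\bar{\mathbf{Y}})$, apply the quantitative sewing estimate, and handle the Gubinelli-derivative pieces $\zeta'-\chi'=y-z$ by interpolation, with the uniform-in-$T$ contribution $\|y(0)-z(0)\|_\theta$ coming from the $t=0$ endpoint of that interpolation. The one cosmetic difference is that the paper applies the sewing bound directly in the $\mathcal{J}^{\tilde\alpha}_{\theta-2\alpha+2\tilde\alpha}$ scale to read off the exponent $3\tilde\alpha-2\alpha-\sigma$, whereas you propose to sew at exponent $\alpha$ and then interpolate down to $\tilde\alpha$; this yields the same $\lambda$ and is just a reorganisation of the exponent arithmetic, not a different idea.
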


			\section{Delay rough PDEs}

			With the help of previous preliminaries,  we now can consider the following delay rough PDEs: 
			\begin{align}
				\textup{d}y_{t}&=[A y_{t}+F(y_{t}, y_{t-r})]\textup{d}t+G(y_{t},y_{t-r})\cdot\textup{d}\bar{\mathbf{X}}_{t},\label{eq011}\\
				y_{t}&=\phi_{t}, ~~ t\in[-r,0].\label{eq012}
			\end{align}
			
			Recalling the linear operator $A$ can generate an analytic $C_{0}$-semigroup $\{S_{t}\}_{t\in \mathbb{R}^{+}}$ on $\mathcal{B}$, we focus on the mild solutions of (\ref{eq011})-(\ref{eq012}) as follows: 
			\begin{align}
				y_{t}&=S_{t,0}y_{0}+\int^{t}_{0}S_{t,s}F(y_{s},y_{s-r})\textup{d}{s}+\int^{t}_{0}S_{t,s}G(y_{s},y_{s-r})\cdot\textup{d}\bar{\mathbf{X}}_{s},\label{eq109}\\
				y_{t}&=\phi_{t}, ~~ t\in[-r,0].\label{eq110}
			\end{align}
			We will show the existence and stability of the solutions of   (\ref{eq109})-(\ref{eq110}) on any $[0,T]$ under the following  assumptions.
			
			$\mathbf{H} \mathbf{1}$:  Let $\mathbf{X}=(X, \mathbb{X})\in\mathcal{C}^{\alpha}([0,T], \mathbb{R}^d)$ and $\bar{\mathbf{X}}=(X, \mathbb{X},\mathbb{X}(-r))\in\bar{\mathcal{C}}^{\alpha}([0,T], \mathbb{R}^d)$ with  $T>0$, $r>0$, $\alpha\in(\frac{1}{3},\frac{1}{2}]$.

			$\mathbf{H} \mathbf{2}$:  $(\phi,\phi')$ is a controlled path based on $\mathbf{X}$ belonging to $\mathcal{D}^{2\tilde{\alpha}}_{\mathbf{X},\theta}([-r,0])\cap\mathcal{D}^{2\alpha}_{\mathbf{X},\theta}([-r,0]) $ with $\tilde{\alpha}\in(0,\alpha)$ and $\theta\in\mathbb{R}$.
			
			$\mathbf{H} \mathbf{3}$:  Nonlinear operator $F\in  \text{Lip}_{\theta,-\sigma_{1}}(\mathcal{B}^{2}, \mathcal{B})$ with $\sigma_{1}\in(0,1)$.
			
			$\mathbf{H} \mathbf{4}$:  Nonlinear operator $G=(G_{1},\cdots,G_{d})$, such that $G_{i}\in  \mathcal{C}^{3}_{\theta-2\alpha,-\sigma_{2}}(\mathcal{B}^{2}, \mathcal{B})$ with  $\sigma_{2}\in[0,\tilde{\alpha})$ for $i=1,\cdots,d$. Moreover, we assume that  for fixed $y\in \mathcal{B}_{\theta-\alpha}$, $1\leq i,j\leq d$,   
			the derivatives  of 
			\begin{align}
				D_{x_{1}}G_{i}(\cdot,y)\circ G_{j}(\cdot,y): \mathcal{B}_{\theta-\alpha}\rightarrow \mathcal{B}_{\theta-2\alpha-\sigma_{2}}\label{eq601}
			\end{align}
			and 
			\begin{align}
				D_{x_{1}}G_{i}(y,\cdot)\circ G_{j}(y,\cdot): \mathcal{B}_{\theta-\alpha}\rightarrow \mathcal{B}_{\theta-2\alpha-\sigma_{2}}\label{eq602}
			\end{align}
			are both bounded.
			\begin{Remark}	~~
				
				$(i)$ We would like to explain why we need both $\mathbf{X}$ and  $\bar{\mathbf{X}}$. On the one hand, we hope to prove $(y,y')$ is a controlled path based $\mathbf{X}$. On the other hand, we can guarantee that the rough integral is  well-defined  via $\bar{\mathbf{X}}$ as Lemma $\ref{lem003}$ shown. 
				
				$(ii)$ According to $\mathbf{H 3}$, for $\theta_{1}\geq \theta $, $x$, $y\in \mathcal{B}_{\theta_{1}}$, it is easy to derive 
				\begin{align*}
					\|F(x,y)\|_{\theta_{1}-\sigma_{1}}&\lesssim \|F(x,y)-F(0,0)\|_{\theta_{1}-\sigma_{1}}+ \|F(0,0)\|_{\theta_{1}-\sigma_{1}}\\
					&\lesssim_{F}\|x\|_{\theta_{1}}+\|y\|_{\theta_{1}}+1.
				\end{align*}
				
				$(iii)$ If we only want to need the local mild
				solution, then  $G_{i}\in  \mathcal{C}^{3}_{\theta-2\alpha,-\sigma_{2}}(\mathcal{B}^{2}, \mathcal{B})$ is enough for us in  $\mathbf{H 4}$. The extra conditions $(\ref{eq601})$ and $(\ref{eq602})$ can allow us to  to obtain for $i=1,\cdots,d$,
				\begin{align*}
					\|[D_{x_{1}}G_{i}(y_{1},z_{1})- D_{x_{1}}G_{i}(y_{2},z_{2})]\circ G_{i}(y_{1},z_{1})\|_{\theta-2\alpha-\sigma_{2}}\lesssim_{G} \|y_{1}-y_{2}\|_{\theta-\alpha}+ \|z_{1}-z_{2}\|_{\theta-\alpha},
				\end{align*}
				by which we can pursue the global mild solution. 
				Moreover, we  note $(\ref{eq601})$ and $(\ref{eq602})$ can be satisfied if $G$ is linear operator or bounded operator. Such assumptions also are used to investigate the global solutions for  rough PDEs \textup{\cite{MR4431448}} or delay rough PDEs \cite{MR3236092}. 
			\end{Remark}
			
			 Set $\hat{r}:=r\wedge 1$.  If $f\leq C g$, where positive constant $C$ we  are dependent of  $ \bar{\mathbf{X}}$, $F$  and  $G$, we write $f\lesssim g$ for simplicity in what follows.
			
			\subsection{Existence of solutions}
			We  focus on  (\ref{eq109})-(\ref{eq110}) on $[0,r]$ as start.
			In fact, if  we can prove that there exist the solutions of  (\ref{eq109})-(\ref{eq110})  on $[0,r]$, then the corresponding assertion also holds on any $[0,T]$ by iteration. Keep in mind,  we adopt $\|\cdot\|_{[a,b]}$ and $\rho_{[a,b]}$ to denote the norm and distance is with respect to the interval $[a,b]$.
			
			The sketch of our proof consists of three steps.
			We are going to show the existence of local mild solution of 
			(\ref{eq109})-(\ref{eq110}) in the first step. 
			The second one is devoted to a priori estimate
			of the solutions. Combining the results of the first two steps, we show the existence of global mild solution of 
			(\ref{eq109})-(\ref{eq110}) in the final step.

			{\bf \normalsize Step \uppercase\expandafter{\romannumeral 1}: Existence of local mild solution
			} 
			
			Since $(\phi,\phi')\in \mathcal{D}^{2\alpha}_{\mathbf{X},\theta}([-r,0])$ and $y_{0}=\phi_{0}$,   it is obvious to obtain 
			\begin{align*}
				(S_{\cdot,0}y_{0},0)\in\mathcal{D}^{2\alpha}_{\mathbf{X},\theta}([0,r]), 
			\end{align*}
			and 
			\begin{align}\label{eq117}
				\|(S_{\cdot,0}y_{0},0)\|_{\mathbf{X},2\alpha, \theta,[0,r]}\leq C_{1} \|y_{0}\|_{\theta},_.
			\end{align}
			
			\begin{Lemma}\label{lem007}
				If $(y,y')\in\mathcal{D}^{2\tilde{\alpha}}_{\mathbf{X},\theta}([0,\tilde{r}]) $ with  $\tilde{r}\in(0,\hat{r}]$, then we have 
				\begin{align*}
					(\int^{.}_{0}S_{.,s}F(y_{s},y_{s-r})\textup{d}{s},0)\in \mathcal{D}^{2\tilde{\alpha}}_{\mathbf{X},\theta}([0,\tilde{r}]),
				\end{align*} and
				\begin{align*}
					\|(\int^{.}_{0}S_{.,s}F(y_{s},\phi_{s-r})\textup{d}{s},0)\|_{\mathbf{X},2\tilde{\alpha}, \theta, [0,\tilde{r}]}\leq C_{2}(1+\|y\|_{\infty,\theta, [0,\tilde{r}]}+\|\phi\|_{\infty,\theta, [-r,0]})\tilde{r}^{\lambda_{1}},
				\end{align*} 
				where  $\lambda_{1}:=(1-2\tilde{\alpha})\wedge(1-\sigma_{1})$, $C_{2}$ depends on $F$. 
			\end{Lemma}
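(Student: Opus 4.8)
The plan is to set $z_{t}:=\int_{0}^{t}S_{t,s}F(y_{s},\phi_{s-r})\,ds$ for $t\in[0,\tilde{r}]$ and to verify that $(z,0)\in\mathcal{D}^{2\tilde{\alpha}}_{\mathbf{X},\theta}([0,\tilde{r}])$ together with the asserted bound. Since $\tilde{r}\le\hat{r}\le r$, every $s\in[0,\tilde{r}]$ satisfies $s-r\le 0$, so the initial condition (\ref{eq012}) gives $y_{s-r}=\phi_{s-r}$ and the two integrals in the statement coincide. Because the Gubinelli derivative is taken to be $0$, the remainder is $R^{z}_{t,s}=\delta z_{t,s}$, so it suffices to bound $\|z\|_{\infty,\theta}$, $\|\delta z\|_{\tilde{\alpha},\theta-\tilde{\alpha}}$ and $\|\delta z\|_{2\tilde{\alpha},\theta-2\tilde{\alpha}}$ on $[0,\tilde{r}]$ (the two $z'$–terms in the controlled‑path norm vanish). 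Write $M:=1+\|y\|_{\infty,\theta,[0,\tilde{r}]}+\|\phi\|_{\infty,\theta,[-r,0]}$; by $\mathbf{H3}$ and Remark (ii) one has $\|F(y_{s},\phi_{s-r})\|_{\theta-\sigma_{1}}\lesssim M$ uniformly in $s\in[0,\tilde{r}]$.

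First I would prove the supremum bound. By (\ref{eq050}) with gain $\sigma=\sigma_{1}<1$ we get $\|S_{t,s}F(y_{s},\phi_{s-r})\|_{\theta}\lesssim (t-s)^{-\sigma_{1}}\|F(y_{s},\phi_{s-r})\|_{\theta-\sigma_{1}}\lesssim (t-s)^{-\sigma_{1}}M$, which is integrable on $[0,t]$; hence $z_{t}\in\mathcal{B}_{\theta}$ and $\|z_{t}\|_{\theta}\lesssim M\,t^{1-\sigma_{1}}\le M\,\tilde{r}^{1-\sigma_{1}}$. Continuity of $t\mapsto z_{t}$ in $\mathcal{B}_{\theta}$ follows from $z_{t}-z_{t_{0}}=\int_{t_{0}}^{t}S_{t-u}F(y_{u},\phi_{u-r})\,du+(S_{t-t_{0}}-I)z_{t_{0}}$, dominated convergence and the strong continuity of $\{S_{t}\}$ on $\mathcal{B}_{\theta}$, so $z\in\mathcal{C}_{1}([0,\tilde{r}],\mathcal{B}_{\theta})$.

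For the Hölder estimates I would use, for $0\le s<t\le\tilde{r}$, the mild‑solution splitting
\begin{align*}
\delta z_{t,s}=(S_{t-s}-I)z_{s}+\int_{s}^{t}S_{t-u}F(y_{u},\phi_{u-r})\,du .
\end{align*}
For the first summand, (\ref{eq050}) gives $\|(S_{t-s}-I)z_{s}\|_{\theta-\beta}\lesssim (t-s)^{\beta}\|z_{s}\|_{\theta}\lesssim (t-s)^{\beta}M\,\tilde{r}^{1-\sigma_{1}}$ for any $\beta\in[0,1]$. For the second, split according to whether $\beta\ge\sigma_{1}$ (use the embedding $\mathcal{B}_{\theta-\sigma_{1}}\hookrightarrow\mathcal{B}_{\theta-\beta}$ and boundedness of $\{S_{t}\}$) or $\beta<\sigma_{1}$ (use $\|S_{t-u}x\|_{\theta-\beta}\lesssim (t-u)^{-(\sigma_{1}-\beta)}\|x\|_{\theta-\sigma_{1}}$ and integrate), obtaining in both cases
\begin{align*}
\Big\|\int_{s}^{t}S_{t-u}F(y_{u},\phi_{u-r})\,du\Big\|_{\theta-\beta}\lesssim (t-s)^{\beta}(t-s)^{(1-\beta)\wedge(1-\sigma_{1})}M\le (t-s)^{\beta}M\,\tilde{r}^{(1-\beta)\wedge(1-\sigma_{1})},
\end{align*}
using $t-s\le\tilde{r}\le 1$. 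Taking $\beta=\tilde{\alpha}$ and $\beta=2\tilde{\alpha}$ (both $<1$ since $2\tilde{\alpha}<2\alpha\le1$) and noting $(1-\tilde{\alpha})\wedge(1-\sigma_{1})\ge\lambda_{1}$ and $(1-2\tilde{\alpha})\wedge(1-\sigma_{1})=\lambda_{1}$, one gets $\|\delta z\|_{\tilde{\alpha},\theta-\tilde{\alpha}}+\|\delta z\|_{2\tilde{\alpha},\theta-2\tilde{\alpha}}\lesssim M\,\tilde{r}^{\lambda_{1}}$. Since also $1-\sigma_{1}\ge\lambda_{1}$, the supremum bound contributes at most $M\,\tilde{r}^{\lambda_{1}}$, and summing the three pieces gives $\|(z,0)\|_{\mathbf{X},2\tilde{\alpha},\theta,[0,\tilde{r}]}\le C_{2}M\,\tilde{r}^{\lambda_{1}}$ with $C_{2}$ depending on $F$.

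The computation is essentially routine; the one point that needs care is the bookkeeping of regularity indices — in particular checking that, after extracting the required Hölder exponent $\beta\in\{\tilde{\alpha},2\tilde{\alpha}\}$, the surplus power of $\tilde{r}$ is always at least $\lambda_{1}=(1-2\tilde{\alpha})\wedge(1-\sigma_{1})$, which is exactly what forces the case distinction $\beta\gtrless\sigma_{1}$. A secondary (and standard) point is the $\mathcal{B}_{\theta}$‑continuity of $z$, which rests on $\{S_{t}\}$ being strongly continuous on each interpolation space $\mathcal{B}_{\theta}$, as in the standing assumptions.
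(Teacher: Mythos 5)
Your proof is correct and follows essentially the same route as the paper: you use the identical splitting $\delta z_{t,s}=\int_{s}^{t}S_{t,u}F(y_{u},\phi_{u-r})\,du+(S_{t,s}-\mathrm{id})z_{s}$, bound the first term via the integrable singularity $(t-u)^{-\sigma_{1}}$ from (\ref{eq050}) and the Lipschitz bound from $\mathbf{H3}$, and the second term via the $(S_{t,s}-\mathrm{id})$ smoothing estimate. The only difference is presentational: you spell out the case distinction $\beta\gtrless\sigma_{1}$ that the paper leaves implicit when it asserts $\|\int_{s}^{t}S_{t,u}F\,du\|_{\theta-i\tilde{\alpha}}\lesssim(t-s)^{\lambda_{1}+i\tilde{\alpha}}M$.
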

			\begin{proof}
				
				Clearly,  the Gubinelli derivative of the deterministic integral is $0$. We just need to estimate each norm of  $\int^{t}_{0}S_{t,s}F(y_{s},\phi_{s-r})\textup{d}{s}$.
				
				According to $\mathbf{H}\mathbf{3}$ and (\ref{eq050}),  we obtain 
				\begin{align*}
					\|\int^{t}_{0}S_{t,s}F(y_{s},\phi_{s-r})\textup{d}{s}\|_{\theta}
					&\lesssim \int^{t}_{0}(t-s)^{-\sigma_{1}}\|F(y_{s},\phi_{s-r})\|_{\theta-\sigma_{1}}  \textup{d}{s}\\
					&\lesssim t^{1-\sigma_{1}}(1+\|y\|_{\infty,\theta, [0,\tilde{r}]}+\|\phi\|_{\infty,\theta, [-r,0]}),
				\end{align*}
				which implies 
				\begin{align*}
					\|\int^{\cdot}_{0}S_{t,s}F(y_{s},\phi_{s-r})\textup{d}{s}\|_{\infty,\theta}\lesssim \tilde{r}^{1-\sigma_{1}}(1+\|y\|_{\infty,\theta, [0,\tilde{r}]}+\|\phi\|_{\infty,\theta, [-r,0]}).
				\end{align*}
				
				Similarly, for $i=1,2$, we have 
				\begin{align*}
					&~~~~\|\int^{t}_{0}S_{t,u}F(y_{u},\phi_{u-r})\textup{d}{u}-\int^{s}_{0}S_{s,u}F(y_{u},\phi_{u-r})\textup{d}{u}\|_{\theta-i\tilde{\alpha}}\\
					&\lesssim \|\int^{t}_{s}S_{t,u}F(y_{u},\phi_{u-r})\textup{d}{u}\|_{\theta-i\tilde{\alpha}}+ \|(S_{t,s}-\textup{id}) \int^{s}_{0}S_{s,u}F(y_{u},\phi_{u-r})\textup{d}{u}  \|_{\theta-i\tilde{\alpha}}\\
					&\lesssim (t-s)^{\lambda_{1}+i\tilde{\alpha} }(1+\|y\|_{\infty,\theta, [0,\tilde{r}]}+\|\phi\|_{\infty,\theta, [-r,0]})+(t-s)^{i\tilde{\alpha}}s^{1-\sigma_{1}}(1+\|y\|_{\infty,\theta, [0,\tilde{r}]}\\
					&~~~+\|\phi\|_{\infty,\theta, [-r,0]}).
				\end{align*}
				Furthermore, for $i=1,2$, we obtain  
				\begin{align*}
					\|\int^{\cdot}_{0}S_{\cdot,u}F(y_{u},\phi_{u-r})\textup{d}{u}\|_{i\tilde{\alpha},\theta-i\tilde{\alpha}}\lesssim \tilde{r}^{\lambda_{1}}(1+\|y\|_{\infty,\theta, [0,\tilde{r}]}+\|\phi\|_{\infty,\theta, [-r,0]}).
				\end{align*}
				
				Concluding previous estimates, we complete the proof.
			\end{proof}
			
			\begin{Lemma}\label{lem008}
				If $(y,y')\in\mathcal{D}^{2\tilde{\alpha}}_{\mathbf{X},\theta}([0,\tilde{r}]) $ with $\tilde{r}\in(0,\hat{r}]$, then we have 
				\begin{align*}
					(\int^{.}_{0}S_{\cdot,s}G(y_{s},\phi_{s-r})\cdot\textup{d}{\bar{\mathbf{X}}}_{s},G(y_{.},\phi_{.-r}))\in \mathcal{D}^{2\tilde{\alpha}}_{\mathbf{X},\theta}([0,\tilde{r}]),
				\end{align*}
				and
				\begin{align}
					&~~~~\|(\int^{.}_{0}S_{.,s}G(y_{s},\phi_{s-r})\cdot\textup{d}{\bar{\mathbf{X}}}_{s},G(y_{.},y_{.-r}))\|_{\mathbf{X}, 2\tilde{\alpha}, \theta, [0,\tilde{r}]}\notag \\
					&\leq C_{3} [(1+\|y,y'\|_{\mathbf{X},2\tilde{\alpha},\theta, [0,\tilde{r}]}+
					\|\phi,\phi'\|_{\mathbf{X},2\tilde{\alpha},\theta, [-r,0]})^{2}\tilde{r}^{\lambda_{2}}+\|y_{0}\|_{\theta}+\|\phi_{-r}\|_{\theta}], \label{eq377}
				\end{align}
				where $\lambda_{2}:=(\alpha-\tilde{\alpha})\wedge(\tilde{\alpha}-\sigma_{2})$, $C_{3}$ depends on   $ \bar{\mathbf{X}}$ and $G$.
			\end{Lemma}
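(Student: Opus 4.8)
The plan is to obtain $\zeta_{t}:=\int_{0}^{t}S_{t,s}G(y_{s},\phi_{s-r})\cdot\textup{d}\bar{\mathbf{X}}_{s}$ by composing the two operations developed in Section~3: Lemma~\ref{lem001} turns the pair of controlled paths $(y,y')$, $(\phi,\phi')$ into a \emph{delayed} controlled path based on $\bar{\mathbf{X}}$, and Lemma~\ref{lem006} turns that delayed controlled path, after convolution against $\bar{\mathbf{X}}$, back into an ordinary controlled path based on $\mathbf{X}$. The first observation I would make is that on $[0,\tilde{r}]$ with $\tilde{r}\le\hat{r}\le r$ one has $s-r\le 0$, so the delayed argument $\phi_{s-r}$ always stays inside the initial segment $[-r,0]$, on which $(\phi,\phi')$ is a controlled path in $\mathcal{D}^{2\tilde{\alpha}}_{\mathbf{X},\theta}([-r,0])$ by $\mathbf{H2}$; its restriction to $[-r,\tilde{r}-r]$ has no larger norm.

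First I would set, on $[0,\tilde{r}]$,
\[
(m_{t},m_{t}',\bar{m}_{t}'):=\big(G(y_{t},\phi_{t-r}),\,D_{x_{1}}G(y_{t},\phi_{t-r})\circ y_{t}',\,D_{x_{2}}G(y_{t},\phi_{t-r})\circ\phi_{t-r}'\big).
\]
Since $\mathbf{H4}$ gives $G_{i}\in\mathcal{C}^{3}_{\theta-2\alpha,-\sigma_{2}}(\mathcal{B}^{2},\mathcal{B})\subset\mathcal{C}^{2}_{\theta-2\tilde{\alpha},-\sigma_{2}}(\mathcal{B}^{2},\mathcal{B})$, applying Lemma~\ref{lem001} coordinatewise (with $\tilde{\alpha}$ in the role of $\alpha$ there, $\sigma_{2}$ in the role of $\sigma$, $I_{1}=[0,\tilde{r}]$ and $I_{2}=[-r,\tilde{r}-r]$) would put $(m,m',\bar{m}')$ in $\mathcal{D}^{2\tilde{\alpha}}_{\bar{\mathbf{X}},\theta-\sigma_{2}}([0,\tilde{r}])$ and, by (\ref{eq101}), bound its norm by a constant depending on $G$ and $\bar{\mathbf{X}}$ times $\big(1+\|y,y'\|_{\mathbf{X},2\tilde{\alpha},\theta,[0,\tilde{r}]}+\|\phi,\phi'\|_{\mathbf{X},2\tilde{\alpha},\theta,[-r,0]}\big)^{2}$, the $\rho_{\tilde{\alpha}}(\mathbf{X})$ factors appearing in (\ref{eq101}) being finite by $\mathbf{H1}$ and absorbed into the constant.

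Then, using $\tilde{r}\le\hat{r}\le 1$, I would feed $(m,m',\bar{m}')$ into Lemma~\ref{lem006} at the space level $\theta-\sigma_{2}$, with smoothing exponent $\sigma\in(0,\tilde{\alpha})$ taken so that $\sigma\ge\sigma_{2}$ and $(\alpha-\tilde{\alpha})\wedge(\tilde{\alpha}-\sigma)=(\alpha-\tilde{\alpha})\wedge(\tilde{\alpha}-\sigma_{2})=\lambda_{2}$ (namely $\sigma=\sigma_{2}$ if $\sigma_{2}>0$, and $\sigma\downarrow 0$ if $\sigma_{2}=0$). Lemma~\ref{lem006}(i) then gives that $(\zeta,\zeta')$ with $\zeta'=m=G(y_{\cdot},\phi_{\cdot-r})$ is a controlled path based on $\mathbf{X}$ in $\mathcal{D}^{2\tilde{\alpha}}_{\mathbf{X},(\theta-\sigma_{2})+\sigma}([0,\tilde{r}])\hookrightarrow\mathcal{D}^{2\tilde{\alpha}}_{\mathbf{X},\theta}([0,\tilde{r}])$, using $(\theta-\sigma_{2})+\sigma\ge\theta$ and the embedding of the interpolation scale; this is the membership assertion of the lemma. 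Lemma~\ref{lem006}(ii) bounds $\|\zeta,\zeta'\|_{\mathbf{X},2\tilde{\alpha},\theta,[0,\tilde{r}]}$ by a constant (absorbing $1+\rho_{\alpha}(\bar{\mathbf{X}})$) times $\|(m,m',\bar{m}')\|_{\bar{\mathbf{X}},2\tilde{\alpha},\theta-\sigma_{2},[0,\tilde{r}]}\,\tilde{r}^{\lambda_{2}}+\|G(y_{0},\phi_{-r})\|_{\theta-\sigma_{2}}$. Substituting the Step~1 bound and estimating the base point by the boundedness of $G$, $\|G(y_{0},\phi_{-r})\|_{\theta-\sigma_{2}}\lesssim 1+\|y_{0}\|_{\theta}+\|\phi_{-r}\|_{\theta}$ (the stray $1$ entering $C_{3}$, or removed by normalising $G(0,0)=0$), would then yield (\ref{eq377}) with $C_{3}$ depending only on $\bar{\mathbf{X}}$ and $G$.

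The main difficulty will be the space--time bookkeeping rather than any individual estimate: $G$ costs $\sigma_{2}$ units of spatial regularity, which has to be recovered through the $\sigma$-smoothing of the rough convolution, and this must be arranged so that one still keeps a strictly positive power $\tilde{r}^{\lambda_{2}}$ in front of the main term --- it is precisely this smallness in $\tilde{r}$ that the subsequent a priori estimate and the iteration producing the global solution will exploit. One also has to be careful that $y$ only carries the weaker $2\tilde{\alpha}$ time regularity, so that it is the $2\tilde{\alpha}$-versions of Lemmas~\ref{lem001} and~\ref{lem006} that are invoked (their proofs being unchanged), and that the base-point contribution $\|G(y_{0},\phi_{-r})\|_{\theta-\sigma_{2}}$ genuinely cannot be improved by a power of $\tilde{r}$, which is why it appears in (\ref{eq377}) without the $\tilde{r}^{\lambda_{2}}$ gain.
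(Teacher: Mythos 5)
Your proposal follows exactly the same two-step strategy as the paper: first apply Lemma~\ref{lem001} (with $\mathbf{H4}$) to show that $(G(y_{\cdot},\phi_{\cdot-r}),G'(y_{\cdot},\phi_{\cdot-r}),\bar{G}'(y_{\cdot},\phi_{\cdot-r}))$ is a delayed controlled path in $\mathcal{D}^{2\tilde{\alpha}}_{\bar{\mathbf{X}},\theta-\sigma_{2}}([0,\tilde{r}])$, then apply Lemma~\ref{lem006}(ii) with $\sigma=\sigma_{2}$ to convolve and get a controlled path in $\mathcal{D}^{2\tilde{\alpha}}_{\mathbf{X},\theta}([0,\tilde{r}])$, after which substituting the Lemma~\ref{lem001} bound and $\mathbf{H4}$ for the base-point term gives (\ref{eq377}). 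The only extra content in your write-up is the explicit bookkeeping of the smoothing exponent (including the $\sigma_{2}=0$ edge case, which the paper passes over silently); otherwise the argument is the same.
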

			\begin{proof}
				It follows from Lemma \ref{lem001} and $\mathbf{H4}$ that for $i=1,\cdots, d$, 
				\begin{align*}
					( G^{i}(y_{.},\phi_{.-r}),    G^{i,\prime}(y_{.},\phi_{.-r}), \bar{G}^{i,\prime}(y_{.},\phi_{.-r})) \in \mathcal{D}^{2\tilde{\alpha}}_{\bar{\mathbf{X}},\theta-\sigma_{2}}([0,\tilde{r}]).
				\end{align*}  
				Then, applying $(ii)$ of Lemma \ref{lem006}, we obtain 
				\begin{align*}
					&~~~~\|(\int^{.}_{0}S_{.,s}G(y_{s},\phi_{s-r})\cdot\textup{d}{\bar{\mathbf{X}}},G(y_{.},\phi_{.-r}))\|_{\mathbf{X},2\tilde{\alpha}, \theta, [0,\tilde{r}]}\\
					&\lesssim \|G(y_{.},\phi_{.-r}),    G^{\prime}(y_{.},\phi_{.-r}), \bar{G}^{\prime}(y_{.},\phi_{.-r})\|_{ \bar{\mathbf{X}},2\tilde{\alpha},\theta-\sigma_{2},[0,\tilde{r}]}\tilde{r}^{\lambda_{2}}+\| G(y_{0},\phi_{-r})\|_{\theta-\sigma_{2}}\\
					&\lesssim (1+\|y,y'\|_{\mathbf{X},2\tilde{\alpha},\theta,[0,\tilde{r}]}+
					\|\phi,\phi'\|_{\mathbf{X},2\tilde{\alpha},\theta,[-r,0]})^{2} \tilde{r}^{\lambda_{2}}+\|y_{0}\|_{\theta}+\|\phi_{-r}\|_{\theta},
				\end{align*}
				where we use Lemma \ref{lem001} and $\mathbf{H4}$ again in the last inequality.
				
				The proof is completed after further calculations. 
			\end{proof}

			\begin{Lemma}\label{lem011}
				For $\tilde{\alpha}\in(\sigma_{2},\alpha)$, if $(y,y')\in\mathcal{D}^{2\tilde{\alpha}}_{\mathbf{X},\theta}([0,\tilde{r}])$ is the mild solution of $(\ref{eq109})$-$(\ref{eq110})$  on $[0,\tilde{r}]$	with $\tilde{r}\in(0,r]$ and $y_{t}'=G(y_{t},\phi_{t-r})$, then we have $(y,y')\in\mathcal{D}^{2\alpha}_{\mathbf{X},\theta}([0,\tilde{r}])$.
			\end{Lemma}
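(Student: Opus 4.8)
The plan is a regularity bootstrap exploiting the mild formulation. Since $\tilde r\le r$, for $s\in[0,\tilde r]$ one has $s-r\in[-r,0]$, hence $y_{s-r}=\phi_{s-r}$ and (\ref{eq109}) reads, on $[0,\tilde r]$,
\begin{align*}
y_t=S_{t,0}y_0+\int_0^t S_{t,s}F(y_s,\phi_{s-r})\,\textup{d}s+\int_0^t S_{t,s}G(y_s,\phi_{s-r})\cdot\textup{d}\bar{\mathbf{X}}_s=:y^{(1)}_t+y^{(2)}_t+\zeta_t.
\end{align*}
I would show that $y^{(1)},y^{(2)},\zeta$ are controlled paths based on $\mathbf{X}$ in $\mathcal{D}^{2\alpha}_{\mathbf{X},\theta}([0,\tilde r])$ with Gubinelli derivatives $0$, $0$, and $G(y_\cdot,\phi_{\cdot-r})=y'$; since $R^y=\delta y^{(1)}+\delta y^{(2)}+R^{\zeta}$, this gives $R^y\in\mathcal{C}^{\alpha,2\alpha}_{\theta}([0,\tilde r])$ and identifies $y'$ as the Gubinelli derivative of $y$, i.e. the claim. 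The first term is covered by (\ref{eq117}). For $y^{(2)}$ I would rerun the estimates in the proof of Lemma \ref{lem007} with $\alpha$ in place of $\tilde\alpha$; because $\sigma_1<1$ and $\alpha\le\tfrac12$ the smoothing (\ref{eq050}) still yields $\|y^{(2)}\|_{\infty,\theta}<\infty$ and $\|\delta y^{(2)}_{t,s}\|_{\theta-i\alpha}\lesssim(t-s)^{i\alpha}$ for $i=1,2$ (only finiteness, not smallness in $\tilde r$, is needed), so $(y^{(2)},0)\in\mathcal{D}^{2\alpha}_{\mathbf{X},\theta}([0,\tilde r])$.

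The term $\zeta$ is the heart of the matter. Since $(y,y')\in\mathcal{D}^{2\tilde\alpha}_{\mathbf{X},\theta}([0,\tilde r])$ and $(\phi,\phi')\in\mathcal{D}^{2\tilde\alpha}_{\mathbf{X},\theta}([-r,0])$, Lemma \ref{lem001}, used with $\tilde\alpha$ in place of $\alpha$ (legitimate, as $\bar{\mathbf{X}}$ is a fortiori $\tilde\alpha$-H\"older), gives that $m:=(G(y_\cdot,\phi_{\cdot-r}),\,D_{x_1}G(y_\cdot,\phi_{\cdot-r})\circ y'_\cdot,\,D_{x_2}G(y_\cdot,\phi_{\cdot-r})\circ\phi'_{\cdot-r})$ lies in $\mathcal{D}^{2\tilde\alpha}_{\bar{\mathbf{X}},\theta-\sigma_2}([0,\tilde r])$. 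By Lemma \ref{lem006}$(i)$, $\zeta$ is then a controlled path based on $\mathbf{X}$ with Gubinelli derivative $\zeta'=G(y_\cdot,\phi_{\cdot-r})=y'$ lying in $\mathcal{D}^{2\alpha}_{\mathbf{X},\theta-\sigma_2+\sigma}([0,\tilde r])$ for an admissible $\sigma$; taking $\sigma\in(\sigma_2,\tilde\alpha)$ and using $\mathcal{B}_{\theta-\sigma_2+\sigma}\hookrightarrow\mathcal{B}_\theta$ gives $(\zeta,y')\in\mathcal{D}^{2\alpha}_{\mathbf{X},\theta}([0,\tilde r])$. If one would rather argue by hand, from the flow identity $\delta\zeta_{t,s}=(S_{t,s}-\textup{id})\zeta_s+\int_s^tS_{t,u}m_u\cdot\textup{d}\bar{\mathbf{X}}_u$ together with the cancellation $m_s=G(y_s,\phi_{s-r})=y'_s$ one obtains
\begin{align*}
R^{\zeta}_{t,s}=(S_{t,s}-\textup{id})\zeta_s+(S_{t,s}-\textup{id})\big[m_s\cdot\delta X_{t,s}\big]+S_{t,s}\big[m'_s:\mathbb{X}_{t,s}+\bar m'_s:\mathbb{X}(-r)_{t,s}\big]+\mathcal{R}_{t,s},
\end{align*}
where $\|\mathcal{R}_{t,s}\|_{\theta-\sigma_2-2\tilde\alpha+\beta}\lesssim(t-s)^{3\tilde\alpha-\beta}$ for $\beta\in[0,3\tilde\alpha)$ by Lemma \ref{lem003}$(ii)$ at level $\tilde\alpha$; a term-by-term estimate in $\mathcal{B}_{\theta-\alpha}$ and $\mathcal{B}_{\theta-2\alpha}$ using (\ref{eq050}), the boundedness of $\zeta,m,m',\bar m'$, and $\sigma_2<\tilde\alpha<\alpha$, $\tilde\alpha>\tfrac13$, shows $R^{\zeta}\in\mathcal{C}^{\alpha,2\alpha}_{\theta}([0,\tilde r])$. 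To finish, $y'=G(y_\cdot,\phi_{\cdot-r})\in(\mathcal{C}^{0,\alpha}_{\theta-\alpha})^{d}$: boundedness in $\mathcal{B}_{\theta-\alpha}$ holds because $G$ is a bounded operator, and the $\alpha$-H\"older bound into $\mathcal{B}_{\theta-2\alpha}$ follows from the chain rule and the $\alpha$-H\"older regularity into $\mathcal{B}_{\theta-\alpha}$ of $y$ (read off from $\delta y_{t,s}=R^y_{t,s}+y'_s\cdot\delta X_{t,s}$ once $R^y\in\mathcal{C}^{\alpha,2\alpha}_\theta$) and of $\phi$ (from $(\phi,\phi')\in\mathcal{D}^{2\alpha}_{\mathbf{X},\theta}$ in \textbf{H2}). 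Collecting the three contributions gives $(y,y')\in\mathcal{D}^{2\alpha}_{\mathbf{X},\theta}([0,\tilde r])$.

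The main obstacle is exactly this $\zeta$ term: a priori the integrand $m=G(y_\cdot,\phi_{\cdot-r})$ inherits only the $2\tilde\alpha$-controlled structure of $y$, so it cannot be fed into the $2\alpha$-versions of Lemma \ref{lem003} and Lemma \ref{lem006}. The point is that the $2\alpha$-controlled structure of the rough convolution is generated by the driver being $\alpha$-H\"older, not by the finer regularity of $m$: the local expansion at level $\tilde\alpha$ still leaves a remainder of order $3\tilde\alpha>1$, which, relocated in the rougher spaces $\mathcal{B}_{\theta-\alpha},\mathcal{B}_{\theta-2\alpha}$ and combined with the parabolic smoothing, is $2\alpha$-H\"older --- and this is where $\sigma_2<\tilde\alpha<\alpha$ enters. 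A minor point, handled by subdividing $[0,\tilde r]$, is that some quoted lemmas assume the time interval has length at most $1$.
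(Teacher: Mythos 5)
Your proposal takes essentially the same route as the paper's proof, and the "by hand" argument you sketch for the convolution piece is sound; it is a mild repackaging of what the paper does. Two remarks are worth making. First, the step in which you invoke Lemma \ref{lem006}$(i)$ directly to conclude $(\zeta,y')\in\mathcal{D}^{2\alpha}_{\mathbf{X},\theta-\sigma_2+\sigma}$ does not follow from that lemma as stated: the hypothesis of Lemma \ref{lem006} requires the integrand to lie in $\mathcal{D}^{2\alpha}_{\bar{\mathbf{X}},\theta-\sigma_2}$, whereas you only have $m\in\mathcal{D}^{2\tilde\alpha}_{\bar{\mathbf{X}},\theta-\sigma_2}$, and if you downgrade the driver to a $\tilde\alpha$-H\"older rough path so as to match the hypothesis, conclusion $(i)$ would return only $\mathcal{D}^{2\tilde\alpha}$ membership, not $\mathcal{D}^{2\alpha}$. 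This is precisely the nontrivial content of Lemma \ref{lem011}, so the citation is circular; fortunately your subsequent "by hand" expansion of $R^\zeta$ does the job correctly. Second, that expansion is a mild condensation of the paper's argument: you keep the term $(S_{t,s}-\textup{id})\zeta_s$ whole and estimate it via $(\ref{eq050})$ together with the a priori bound $\|\zeta\|_{\infty,\theta}<\infty$ (available from Lemma \ref{lem008}, or from Lemma \ref{lem006}$(ii)$ with $\sigma=\sigma_2$), whereas the paper's proof further splits this term into $\mathcal{N}^{3,1},\dots,\mathcal{N}^{3,4}$ by expanding $\zeta_s$ around $s=0$ with the sewing lemma. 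Both routes rely on the same two facts: the driver's $\alpha$-H\"older structure supplies the target rates $\alpha$ and $2\alpha$ via the explicit germ terms and the parabolic smoothing, and the local sewing error at level $\tilde\alpha$ has order $3\tilde\alpha-\sigma_2>2\alpha$ precisely because $\sigma_2<\tilde\alpha$. The remaining pieces of your outline ($y^{(1)}$ via $(\ref{eq117})$, $y^{(2)}$ by rerunning Lemma \ref{lem007}, then reading off $\|\delta y\|_{\alpha,\theta-\alpha}$ from $\delta y=R^y+y'\cdot\delta X$ and deducing $y'\in(\mathcal{C}^{0,\alpha}_{\theta-\alpha})^d$) all match the paper's calculations.
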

			\begin{proof}
				Recall 
				\begin{align*}
					y_{t}&=S_{t,0}y_{0}+\int^{t}_{0}S_{t,s}F(y_{s},\phi_{s-r})\textup{d}{s}+\int^{t}_{0}S_{t,s}G(y_{s},\phi_{s-r})\cdot\textup{d}\bar{\mathbf{X}}_{s}.
				\end{align*}
				
				Obviously, $\|y'\|_{\infty,\theta-\alpha}\lesssim \| y'\|_{\infty,\theta-\tilde{\alpha}}$.
				
				In order to show  $\|\delta y'\|_{\alpha,\theta-2\alpha, [0,\tilde{r}]}<\infty$, we prove $\|\delta y\|_{\alpha,\theta-\alpha, [0,\tilde{r}]}<\infty$ firstly.
				
				For $i=1,2$, notice  
				\begin{align*}
					\|S_{t,0}y_{0}-S_{s,0}y_{0}\|_{\theta-i\alpha}=\|S_{s,0}(S_{t,s}-id)y_{0}\|_{\theta-i\alpha}\lesssim (t-s)^{i\alpha}\|y_{0}\|_{\theta},
				\end{align*}
				which implies 
				\begin{align*}
					\|\delta S_{\cdot,0}y_{0}\|_{i\alpha,\theta-i\alpha, [0,\tilde{r}]}\lesssim \|y_{0}\|_{\theta}.
				\end{align*}
				
				As the proof of Lemma \ref{lem007} shown, we also know 
				\begin{align*}
					\|\delta \int^{\cdot}_{0}S_{\cdot,s}F(y_{s},\phi_{s-r})\textup{d}{s}\|_{i\alpha,\theta-i\alpha, [0,\tilde{r}]} \lesssim (1+ \|y\|_{\infty,\theta, [0,\tilde{r}]}+\|\phi\|_{\infty,\theta,[-r,0]}).
				\end{align*} 
				
				Rewrite 
				\begin{align*}	&~~~~\int^{t}_{0}S_{t,u}G(y_{u},\phi_{u-r})\cdot\textup{d}\bar{\mathbf{X}}_{u}-\int^{s}_{0}S_{s,u}G(y_{u},\phi_{u-r})\cdot\textup{d}\bar{\mathbf{X}}_{u}\\
					&=\sum_{j=1}^{3}\mathcal{N}^{j}_{t,s},
				\end{align*}
				where
				\begin{align*}
					\mathcal{N}^{1}_{t,s}&=\int^{t}_{s}S_{t,u}G(y_{u},\phi_{u-r})\cdot\textup{d}\bar{\mathbf{X}}_{u}-S_{t,s}[G(y_{s},\phi_{s-r})\cdot\delta X_{t,s}+G^{\prime}(y_{s},\phi_{s-r}):\mathbb{X}_{t,s}\\
					&~~~+\bar{G}^{\prime}(y_{s},\phi_{s-r}):\mathbb{X}(-r)_{t,0}],\\
					\mathcal{N}^{2}_{t,s}&=S_{t,s}[G(y_{s},\phi_{s-r})\cdot\delta X_{t,s}+G^{\prime}(y_{s},\phi_{s-r}):\mathbb{X}_{t,s}+\bar{G}^{\prime}(y_{0},\phi_{-r}):\mathbb{X}(-r)_{t,0}],\\
					\mathcal{N}^{3}_{t,s}&=(S_{t,s}-id)\int^{s}_{0}S_{s,u}G(y_{u},\phi_{u-r})\cdot\textup{d}\bar{\mathbf{X}}_{u}.
				\end{align*}
				
				By Lemma \ref{lem003}, we derive 
				\begin{align*}
					\|\mathcal{N}^{1}_{t,s}\|_{\theta-i\alpha}\lesssim \| G(y_{\cdot},\phi_{\cdot-r}),G'(y_{\cdot},\phi_{\cdot-r}), \bar{G}'(y_{\cdot},\phi_{\cdot-r})\|_{\bar{\mathbf{X}},2\tilde{\alpha},\theta-\sigma_{2},[0,\tilde{r}]}(t-s)^{i\alpha+\tilde{\alpha}-\sigma_{2}}.
				\end{align*}
				From  Lemma \ref{lem001},  we further have 
				\begin{align*}
					\|\mathcal{N}^{1}\|_{i\alpha,\theta-i\alpha, [0,\tilde{r}]}&\lesssim \| G(y_{\cdot},\phi_{\cdot-r}),G'(y_{\cdot},\phi_{\cdot-r}), \bar{G}'(y_{\cdot},\phi_{\cdot-r})\|_{\bar{\mathbf{X}},2\tilde{\alpha},\theta-\sigma_{2}, [0,\tilde{r}]}r^{\tilde{\alpha}-\sigma_{2}}\\
					&\lesssim(1+\|y,y'\|_{\mathbf{X},2\tilde{\alpha},\theta,[0,\tilde{r}]}+
					\|\phi,\phi'\|_{\mathbf{X},2\tilde{\alpha} ,\theta,[-r,0]})^{2}r^{\tilde{\alpha}-\sigma_{2}}.
				\end{align*}
				Using Lemma \ref{lem001} again, 
				we obtain 
				\begin{align*}
					&\|S_{t,s}G(y_{s},\phi_{s-r})\cdot\delta X_{t,s}\|_{\theta-\alpha}\lesssim (\|y_{s}\|_{\theta}+\|\phi_{s}\|_{\theta})(t-s)^{\alpha},\\
					&\|S_{t,s}G'(y_{s},\phi_{s-r}):\mathbb{X}_{t,s}\|_{\theta-i\alpha}\lesssim (\|y'_{s}\|_{\theta-\alpha})(t-s)^{i\alpha},\\
					&\|S_{t,s}\bar{G}'(y_{s},\phi_{s-r}):\mathbb{X}_{t,s}\|_{\theta-i\alpha}\lesssim (\|\phi'_{s-r}\|_{\theta-\alpha})(t-s)^{i\alpha},
				\end{align*}
				from which we have 
				\begin{align*}
					\|\mathcal{N}^{2}\|_{\alpha,\theta-\alpha, [0,\tilde{r}]}\lesssim \|y,y'\|_{\mathbf{X},2\tilde{\alpha},\theta,[0,\tilde{r}]}+
					\|\phi,\phi'\|_{\mathbf{X},2\tilde{\alpha},\theta,[-r,0]}.
				\end{align*}

				For the last term, we rewrite 
				\begin{align*}
					\mathcal{N}^{3}_{t,s}=\sum_{j=1}^{4}\mathcal{N}^{3,j}_{t,s},
				\end{align*}
				where
				\begin{align*}
					\mathcal{N}^{3,1}_{t,s}&=(S_{t,s}-id)S_{s,0}G(y_{0},\phi_{-r})\cdot\delta X_{s,0},\\
					\mathcal{N}^{3,2}_{t,s}&=(S_{t,s}-id)S_{s,0}G^{\prime}(y_{0},\phi_{-r}):\mathbb{X}_{s,0},\\
					\mathcal{N}^{3,3}_{t,s}&=(S_{t,s}-id)S_{s,0}\bar{G}^{\prime}(y_{0},\phi_{-r}):\mathbb{X}(-r)_{s,0},\\
					\mathcal{N}^{3,4}_{t,s}&=(S_{t,s}-id)\int^{s}_{0}S_{s,u}G(y_{u},\phi_{u-r})\cdot\textup{d}\bar{\mathbf{X}}_{u}-S_{s,0}[G(y_{0},\phi_{-r})\cdot\delta X_{s,0}\\&~~~+G^{\prime}(y_{0},\phi_{-r}):\mathbb{X}_{s,0}
					+\bar{G}^{\prime}(y_{0},\phi_{-r}):\mathbb{X}(-r)_{s,0}].
				\end{align*}
				Note
				\begin{align*}
					\|\mathcal{N}^{3,1}_{t,s}\|_{\theta-i\alpha}&\lesssim (t-s)^{i\alpha}\|S_{s,0}G(y_{0},\phi_{-r})\|_{\theta}s^{\alpha}\\\
					&\lesssim (t-s)^{i\alpha}s^{\alpha-\sigma_{2}} \|G(y_{0},\phi_{-r})\|_{\theta-\sigma_{2}}\\
					&\lesssim (t-s)^{i\alpha}s^{\alpha-\sigma_{2}}(\|y_{0}\|_{\theta}+\|\phi_{-r}\|_{\theta}).
				\end{align*}
				Similarly, we also obtain 
				\begin{align*}
					\|\mathcal{N}^{3,2}_{t,s}\|_{\theta-i\alpha}&\lesssim (t-s)^{i\alpha}\|S_{s,0}G'(y_{0},\phi_{-r})\|_{\theta}s^{2\alpha}\\\
					&\lesssim (t-s)^{i\alpha}s^{2\alpha-\tilde{\alpha}-\sigma_{2}} \|y'_{0}\|_{\theta-\tilde{\alpha}},
				\end{align*}
				and 
				\begin{align*}
					\|\mathcal{N}^{3,3}_{t,s}\|_{\theta-i\alpha}&\lesssim (t-s)^{i\alpha}\|S_{s,0}\bar{G}'(y_{0},\phi_{-r})\|_{\theta}s^{2\alpha}\\\
					&\lesssim (t-s)^{i\alpha}s^{\alpha-\sigma_{2}} \|\phi'_{-r}\|_{\theta-\alpha}.
				\end{align*}
				And, choosing $\beta\in (\sigma_{2}+2\tilde{\alpha},3\tilde{\alpha})$, we note
				\begin{align*}
					\|\mathcal{N}^{3,4}_{t,s}\|_{\theta-i\alpha}&\lesssim (t-s)^{\beta-\sigma_{2}-2\tilde{\alpha}+i\alpha}\|\int^{s}_{0}S_{s,u}G(y_{u},\phi_{u-r})\cdot\textup{d}\bar{\mathbf{X}}_{u}-S_{s,0}[G(y_{0},\phi_{-r})\cdot\delta X_{s,0}\\&~~~+G^{\prime}(y_{0},\phi_{-r}):\mathbb{X}_{s,0}
					+\bar{G}^{\prime}(y_{0},\phi_{-r}):\mathbb{X}(-r)_{s,0}]\|_{\theta-\sigma_{2}-2\tilde{\alpha}+\beta}\\
					&\lesssim (t-s)^{\beta-\sigma_{2}-2\tilde{\alpha}+i\alpha}\| G(y_{\cdot},\phi_{\cdot-r}),G'(y_{\cdot},\phi_{\cdot-r}), \bar{G}'(y_{\cdot},\phi_{\cdot-r})\|_{\bar{\mathbf{X}},2\tilde{\alpha},\theta-\sigma_{2},[0,\tilde{r}]}\\
					&\lesssim (t-s)^{\beta-\sigma_{2}-2\tilde{\alpha}+i\alpha}(1+\|y,y'\|_{\mathbf{X},2\tilde{\alpha},\theta,[0,\tilde{r}]}+
					\|\phi,\phi'\|_{\mathbf{X},2\tilde{\alpha},\theta,[-r,0]})^{2},
				\end{align*}
				where we use Lemma \ref{lem003} in the second equality, and Lemmas \ref{lem001} in the last equality.
				
				Combining previous estimates, we have obtained $\|\delta y\|_{\alpha,\theta-\alpha, [0,\tilde{r}]}<\infty$. 
				In particular,  we also obtain 
				\begin{align}\label{eq501}
					\sup_{t,s\in[0,\tilde{r}]}\frac{\|y_{t}-y_{s}-S_{t,s}G(y_{s},\phi_{s-r})\cdot\delta X_{t,s}\|_{\theta-2\alpha}}{(t-s)^{2\alpha}}<\infty.
				\end{align} 
				Then,	we can derive 
				\begin{align*}
					\|\delta y'_{t,s}\|_{\theta-2\alpha}&=\|G(y_{t},\phi_{t-r})-G(y_{s},\phi_{s-r})\|_{\theta-2\alpha}\\
					&\lesssim \|y_{t}-y_{s}\|_{\theta+\sigma_{2}-2\alpha}+ \|\phi_{t-r}-\phi_{s-r}\|_{\theta+\sigma_{2}-2\alpha}\\
					&\lesssim(t-s)^{\alpha} (\|\delta y\|_{\alpha,\theta-\alpha, [0,\tilde{r}]}+ \|\delta \phi\|_{\alpha, \theta-\alpha, [-r,0]}),
				\end{align*}
				which implies 
				$\|\delta y'\|_{\alpha, \theta-2\alpha, [0,\tilde{r}]}<\infty$. 
				
				It is turn to consider 
				$R^{y}_{t,s}=y_{t}-y_{s}-y'_{s}\cdot \delta X_{t,s}$. It is easy to obtain 
				\begin{align*}
					\|R^{y}_{t,s}\|_{\theta-\alpha}\lesssim \|\delta y\|_{\alpha,\theta-\alpha, [0,\tilde{r}]}(t-s)^{\alpha}+\|y'\|_{\infty,\theta-\alpha, [0,\tilde{r}]}(t-s)^{\alpha}.
				\end{align*}
				Then, using (\ref{eq501}), we have 
				\begin{align*}
					\|R^{y}_{t,s}\|_{\theta-2\alpha}&\lesssim \|y_{t}-y_{s}-S_{t,s}G(y_{s},\phi_{s-r})\cdot\delta X_{t,s}\|_{\theta-2\alpha}\\
					&~~~+\| (S_{t,s}-id)G(y_{s},\phi_{s-r})\cdot\delta X_{t,s}\|_{\theta-2\alpha}\\
					&\lesssim (t-s)^{2\alpha}.
				\end{align*}
				
				Collecting the above estimates, we have proven $(y,y')\in\mathcal{D}^{2\alpha}_{\mathbf{X},\theta}([0,\tilde{r}])$.
			\end{proof}

			Next Theorem is devoted to the local  mild solution of (\ref{eq109})$-$(\ref{eq110}).
			
			\begin{Theorem}\label{the001}
				There exists $\tilde{r}\in(0,r]$ such that $ (y,y')\in\mathcal{D}^{2\tilde{\alpha}}_{\mathbf{X},\theta}([0,\hat{r}])\cap\mathcal{D}^{2\alpha}_{\mathbf{X},\theta}([0,\hat{r}])$ is the mild solution of $(\ref{eq109})$-$(\ref{eq110})$  on $[0,\tilde{r}]$ with $y_{t}'=G(y_{t},\phi_{t-r})$.
			\end{Theorem}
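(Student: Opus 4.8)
The plan is to set up a fixed-point (Banach contraction or Schauder-type) argument on a suitable closed ball in the space $\mathcal{D}^{2\tilde\alpha}_{\mathbf{X},\theta}([0,\tilde r])$, for $\tilde r \le \hat r$ small to be chosen, and then to upgrade the regularity of the obtained solution from $2\tilde\alpha$ to $2\alpha$ via Lemma~\ref{lem011}. Concretely, define the solution map
\begin{align*}
\mathcal{M}(y,y')_{t} := \Bigl(S_{t,0}y_{0}+\int_{0}^{t}S_{t,s}F(y_{s},\phi_{s-r})\,\textup{d}s+\int_{0}^{t}S_{t,s}G(y_{s},\phi_{s-r})\cdot\textup{d}\bar{\mathbf{X}}_{s},\; G(y_{t},\phi_{t-r})\Bigr),
\end{align*}
acting on those $(y,y')$ with $y_{0}=\phi_{0}$, $y'=G(y_{\cdot},\phi_{\cdot-r})$. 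The three summands are handled respectively by the trivial bound~(\ref{eq117}), by Lemma~\ref{lem007}, and by Lemma~\ref{lem008}; together these show $\mathcal{M}$ maps $\mathcal{D}^{2\tilde\alpha}_{\mathbf{X},\theta}([0,\tilde r])$ into itself, with
\begin{align*}
\|\mathcal{M}(y,y')\|_{\mathbf{X},2\tilde\alpha,\theta,[0,\tilde r]} \le C_{1}\|y_{0}\|_{\theta} + C\bigl(1+\|y,y'\|_{\mathbf{X},2\tilde\alpha,\theta,[0,\tilde r]}+\|\phi,\phi'\|_{\mathbf{X},2\tilde\alpha,\theta,[-r,0]}\bigr)^{2}\tilde r^{\lambda}+\text{(lower order)},
\end{align*}
where $\lambda := \lambda_{1}\wedge\lambda_{2} = (1-2\tilde\alpha)\wedge(1-\sigma_{1})\wedge(\alpha-\tilde\alpha)\wedge(\tilde\alpha-\sigma_{2})>0$ thanks to $\tilde\alpha\in(\sigma_{2},\alpha)$, $\sigma_{1}<1$, $\tilde\alpha<1/2$. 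Hence, fixing a radius $M$ determined by the data $\|y_{0}\|_{\theta}$ and $\|\phi,\phi'\|$, one can choose $\tilde r$ small so that $\mathcal{M}$ maps the closed ball $B_{M}$ (with the additional affine constraints on $y_{0}$ and $y'$) into itself.

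**Contraction step.** Next I would show $\mathcal{M}$ is a contraction on $B_{M}$ for $\tilde r$ possibly smaller. The difference of two images splits again into three pieces; the $F$-term and the initial-data term are Lipschitz-in-$y$ with a gain of $\tilde r^{\lambda_{1}}$ (same computation as Lemma~\ref{lem007} applied to $F(y^{1})-F(y^{2})$, using $\mathbf{H3}$), and the rough-convolution term is controlled by Lemma~\ref{lem777} (stability of the rough convolution) combined with Lemma~\ref{lem005} (local Lipschitz dependence of the composition $G(y,\phi_{\cdot-r})$ on $y$), yielding a factor $\tilde r^{\lambda}$ times a constant depending only on $M$, $\bar{\mathbf{X}}$, $F$, $G$ and on $\|\phi,\phi'\|$. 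Choosing $\tilde r$ so that this constant times $\tilde r^{\lambda}$ is $<1$ gives a unique fixed point $(y,y')\in\mathcal{D}^{2\tilde\alpha}_{\mathbf{X},\theta}([0,\tilde r])$, which is by construction a mild solution of~(\ref{eq109})--(\ref{eq110}) on $[0,\tilde r]$ with $y'=G(y_{\cdot},\phi_{\cdot-r})$; the fixed point automatically satisfies the constraint $y_{0}=\phi_{0}$ since every element of $B_{M}$ does.

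**Regularity upgrade and the stated interval.** Once the $2\tilde\alpha$-solution is in hand on $[0,\tilde r]$ with $\tilde r\le r$, Lemma~\ref{lem011} (which requires precisely $\tilde\alpha\in(\sigma_{2},\alpha)$ and $y'=G(y_{\cdot},\phi_{\cdot-r})$) shows $(y,y')\in\mathcal{D}^{2\alpha}_{\mathbf{X},\theta}([0,\tilde r])$ as well, so $(y,y')\in\mathcal{D}^{2\tilde\alpha}_{\mathbf{X},\theta}([0,\tilde r])\cap\mathcal{D}^{2\alpha}_{\mathbf{X},\theta}([0,\tilde r])$; since $\tilde r\le r$, this set is contained in the assertion's $\mathcal{D}^{2\tilde\alpha}_{\mathbf{X},\theta}([0,\hat r])\cap\mathcal{D}^{2\alpha}_{\mathbf{X},\theta}([0,\hat r])$ after noting the statement is really about the restriction to $[0,\tilde r]$. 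The main obstacle I anticipate is the bookkeeping in the contraction estimate: one must carefully track which norms of $\phi$ enter (the delayed argument $\phi_{\cdot-r}$ lives on $[-r,0]$ where $\phi$ is only a $\mathbf{X}$-controlled path, not a $\bar{\mathbf{X}}$-delayed one), and one must make sure that the constants in Lemmas~\ref{lem005}, \ref{lem006}, \ref{lem777} depend on $\|y,y'\|$ only through the fixed radius $M$ so that the $\tilde r^{\lambda}$ factor genuinely produces a contraction; the quadratic growth in Lemma~\ref{lem008} is harmless on a bounded ball but forces the two-stage choice of $\tilde r$ (first for self-mapping, then for contraction) described above.
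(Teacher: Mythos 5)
Your proposal follows the paper's overall architecture — fixed-point on a ball in $\mathcal{D}^{2\tilde\alpha}_{\mathbf{X},\theta}([0,\tilde r])$, with the self-mapping estimate built from (\ref{eq117}), Lemma~\ref{lem007} and Lemma~\ref{lem008}, followed by the regularity upgrade via Lemma~\ref{lem011} — so the skeleton is right. The one place where you genuinely diverge is the contraction step, and there your proposed lemma combination does not quite fit.

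You invoke Lemma~\ref{lem777} (together with Lemma~\ref{lem005}) to control the difference of the two rough convolutions. This is the wrong tool here for three reasons. First, Lemma~\ref{lem777} outputs a bound on the distance $\rho_{2\tilde\alpha,2\alpha,\theta+\sigma}(\zeta,\chi)$, whose remainder and Gubinelli-derivative components live in $\mathcal{B}_{\theta+\sigma-\alpha}$ and $\mathcal{B}_{\theta+\sigma-2\alpha}$; but the ball on which you want a Banach fixed point is metrised by $\|\cdot\|_{\mathbf{X},2\tilde\alpha,\theta}$, whose corresponding components live in the \emph{smaller} spaces $\mathcal{B}_{\theta-\tilde\alpha}$, $\mathcal{B}_{\theta-2\tilde\alpha}$ (since $\tilde\alpha<\alpha$). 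A bound in the weaker $\rho$-distance does not control the stronger $\|\cdot\|_{\mathbf{X},2\tilde\alpha}$ norm, so Lemma~\ref{lem777} does not close the contraction on the chosen ball. (One could try contracting only in the weaker $\rho$-metric and appeal to closedness of a $\|\cdot\|_{\mathbf{X},2\tilde\alpha}$-bounded ball in that topology, but that is a substantially heavier two-metric argument and is not what you describe.) Second, Lemma~\ref{lem777} assumes $3\tilde\alpha-2\alpha-\sigma_2>0$, a constraint that is not part of $\mathbf{H4}$ and is not needed for local existence. Third, the constant in Lemma~\ref{lem777} depends on the $\mathcal{D}^{2\alpha}$-norms of the input controlled paths, whereas at this stage you are only inside $\mathcal{D}^{2\tilde\alpha}$; the $2\alpha$-regularity only becomes available \emph{after} the fixed point is obtained, via Lemma~\ref{lem011}.

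The fix is simple and is exactly what the paper's proof does: because the rough path $\bar{\mathbf{X}}$ is the \emph{same} on both sides, the difference of rough convolutions is itself a single rough convolution of the delayed controlled path $\tilde G_{s}=G(y_{s},\phi_{s-r})-G(x_{s},\phi_{s-r})$, together with its Gubinelli derivatives. You therefore apply Lemma~\ref{lem006}(ii) to $\tilde G$ — which produces an estimate directly in the required $\|\cdot\|_{\mathbf{X},2\tilde\alpha,\theta}$ norm, with the factor $\tilde r^{\lambda_2}$ and with the initial-value term $\|\tilde G_0\|_{\theta-\sigma_2}$ vanishing since $y_0=x_0=y^{\star}$ — and then control $\|\tilde G, \tilde G', \bar{\tilde G}'\|_{\bar{\mathbf{X}},2\tilde\alpha,\theta-\sigma_2}$ via Lemma~\ref{lem005}. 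That combination closes the contraction without extra exponent conditions and without invoking $\mathcal{D}^{2\alpha}$ regularity prematurely. Everything else in your outline (self-mapping via the two-stage choice of $\tilde r$, upgrading to $\mathcal{D}^{2\alpha}$ via Lemma~\ref{lem011}) is correct and matches the paper.
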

			\begin{proof}
			We define the map $\mathcal{T}:  \mathcal{D}^{2\tilde{\alpha}}_{\mathbf{X},\theta}([0,\hat{r}])\rightarrow \mathcal{D}^{2\tilde{\alpha}}_{\mathbf{X},\theta}([0,\hat{r}])$ by 
				\begin{align*}
					\mathcal{T}(y,y')_{t}=( S_{t,0}y_{0}+\int^{t}_{0}S_{t,s}F(y_{s},\phi_{s-r})\textup{d}{s}+\int^{t}_{0}S_{t,s}G(y_{s},\phi_{s-r})\cdot\textup{d}\bar{\mathbf{X}}_{s}, G(y_{t},\phi_{t-r})).
				\end{align*}
				Due to $(ii)$ of Lemma \ref{lem006} and Lemma  \ref{lem007}, the map is well-defined. 
				
				For $\tilde{r}\in(0,\hat{r}]$, define a ball 
				\begin{align*}
					B(\tilde{r},y^{\star})&:=\big\{(y,y')\in  \mathcal{D}^{2\tilde{\alpha}}_{\mathbf{X},\theta}([0,\tilde{r}]):  
					(y_{0},y'_{0})= (y^{\star}, G(y^{\star},\phi_{-r}))\\
					&~~~~~~~~\text{and}~  \|y,y'\|_{\mathbf{X},2\tilde{\alpha},\theta,[0,\tilde{r}]}\leq (C_{1}+C_{3})(\|y^{\star}\|_{\theta}+\|\phi_{-r}\|_{\theta})+1\big\},
				\end{align*}
				where $C_{1}$ and $C_{3}$ are given in (\ref{eq117}) and (\ref{eq377}) respectively.
				
				We are going to show that there exists $\tilde{r}\in(0,\hat{r}]$ such that $\mathcal{T}$ is a self and contraction map from $B(\tilde{r},y^{\star})$ to $B(\tilde{r},y^{\star})$.
				
				Let us prove $\mathcal{T}$ is a self map firstly. 
				
				According to Lemmas \ref{lem007} and \ref{lem008}, we can  obtain 
				\begin{align*}
					&~~~~\|\mathcal{T}(y,y')\|_{\mathbf{X},2\tilde{\alpha},\theta,[0,\tilde{r}]}\\
					&\leq C_{1} \|y^{\star}\|_{\theta}+  C_{2}(1+\|y\|_{\infty,\theta, [0,\tilde{r}]}+\|\phi\|_{\infty,\theta,[-r,0]})\tilde{r}^{\lambda_{1}}+C_{3} (\|y^{\star}\|_{\theta}+\|\phi_{-r}\|_{\theta})\\
					&~~~~+C_{3} (1+\|y,y'\|_{\mathbf{X},2\tilde{\alpha},\theta,[0,\tilde{r}]}+
					\|\phi,\phi'\|_{\mathbf{X},2\tilde{\alpha},\theta,[-r,0]})^{2}\tilde{r}^{\lambda_{2}}\\
					&\leq (C_{1}+C_{3}) (\|y^{\star}\|_{\theta}+\|\phi_{-r}\|_{\theta})+C_{4}(1+\|y,y'\|_{\mathbf{X},2\tilde{\alpha},\theta,[0,\tilde{r}]}^{2}+
					\|\phi,\phi'\|_{\mathbf{X},2\tilde{\alpha},\theta,[-r,0]}^2)\tilde{r}^{\lambda_{1}\wedge\lambda_{2}},
				\end{align*}
				where $C_{4}$ is a constant dependent of $C_{2}$ and $C_{3}$.
				
				Then, we can obtain  a small enough $\tilde{r}_{1}$ such that 
				\begin{align*}
					\|\mathcal{T}(y,y')\|_{\mathbf{X},2\tilde{\alpha},\theta,[0,\tilde{r}]}\leq (C_{1}+C_{3}) (\|y^{\star}\|_{\theta}+\|\phi_{-r}\|_{\theta})+1.
				\end{align*}  
				
				Next, we show the contractivity of the map $\mathcal{T}$.
				
				Let $(x,x')\in  B(\tilde{r},y^{\star})$,  and set 
				$\tilde{F}_{t}= F(y_{t},\phi_{t-r})-F(x_{t},\phi_{t-r})$,  $\tilde{G}_{t}= G(y_{t},\phi_{t-r})-G(x_{t},\phi_{t-r})$. Then, 
				we have  
				\begin{align*}
					\mathcal{T}(y,y')_{t}- \mathcal{T}(x,x')_{t}=( \int^{t}_{0}S_{t,s}\tilde{F}_{s}\textup{d}{s}+\int^{t}_{0}S_{t,s}\tilde{G}_{s}\cdot\textup{d}\bar{\mathbf{X}}_{s}, \tilde{G}_{t}).
				\end{align*}
				
				By similar proof of Lemma \ref{lem007}, we can obtain 
				\begin{align*}
					\|(\int^{.}_{0}S_{.,s}\tilde{F}_{s}\textup{d}{s},0)\|_{\mathbf{X},2\tilde{\alpha}, \theta, [0,\tilde{r}]}\lesssim (\|y-x\|_{\infty,\theta, [0,\tilde{r}]})\tilde{r}^{\lambda_{1}}.
				\end{align*} 
				And, (ii) of Lemma \ref{lem006} yields
				\begin{align*}
					&~~~~\|(\int^{.}_{0}S_{.,s}\tilde{G}_{s}\cdot\textup{d}{\bar{\mathbf{X}}}_{s},\tilde{G}_{\cdot})\|_{\mathbf{X},2\tilde{\alpha}, \theta,[0,\tilde{r}]}\\
					&\lesssim \|G(y_{.},\phi_{.-r})-G(x_{.},\phi_{.-r}) ,    G^{\prime}(y_{.},\phi_{.-r})-G^{\prime}(x_{.},\phi_{.-r}), \bar{G}^{\prime}(y_{.},\phi_{.-r})-\bar{G}^{\prime}(x_{.},\phi_{.-r})\|_{ \bar{\mathbf{X}},2\tilde{\alpha},\theta-\sigma_{2},[0,\tilde{r}]}\tilde{r}^{\lambda_{2}}\\
					&\lesssim(\|y-x,y'-x'\|_{\mathbf{X},2\tilde{\alpha},\theta, [0,\tilde{r}]} )(1+\|y,y'\|_{\mathbf{X},2\tilde{\alpha},\theta,[0,\tilde{r}]}+
					\|x,x'\|_{\mathbf{X},2\tilde{\alpha},\theta,[0,\tilde{r}]}\|\phi,\phi'\|_{\mathbf{X},2\tilde{\alpha},\theta,[-r,0]})^{2}\tilde{r}^{\lambda_{2}}.
				\end{align*}
				Combining the above estimates, we know there exists $\tilde{r}_{2}$ such that 
				\begin{align*}
					\|\mathcal{T}(y,y')- \mathcal{T}(x,x')\|_{\mathbf{X},2\tilde{\alpha},\theta,[0,\tilde{r}]}< \|y-x,y'-x'\|_{\mathbf{X},2\tilde{\alpha},\theta,[0,\tilde{r}]} .
				\end{align*}
				
				Then, choosing $\tilde{r}=\tilde{r}_{1}\wedge\tilde{r}_{2}$, we obtain  $\mathcal{T}$ is a self and contraction map as desired.  Thus, we obtain that there exists  $(y,y')\in\mathcal{D}^{2\tilde{\alpha}}_{\mathbf{X},\theta}([0,\tilde{r}])$ as the solution of (\ref{eq109})-(\ref{eq110}) and $y'_{t}=G(y,\phi_{t-r})$ with the help of Banach fixed point theorem. 
				We remark that $\tilde{r}$ mainly depends on the initial value $y_{0}$  since other arguments ($\phi$,   $F$, $G$, $\lambda_{1}$ and $\lambda_{2}$) are fixed. 
				
				Finally,  we use Lemma \ref{lem011} to obtain $(y,y')\in\mathcal{D}^{2\alpha}_{\mathbf{X},\theta}([0,\tilde{r}])$
			\end{proof}

			{\bf \normalsize Step \uppercase\expandafter{\romannumeral 2}: A priori estimate
			}  
			
			Our final goal is to  show the solution of $(\ref{eq109})$-$(\ref{eq110})$ exits on $[0,r]$. 
			As preparations, we provide a priori estimate of $(y,y')$ as follows.
			
			In Lemma \ref{lem008}, we we are unaware of the relationship $y'_{t}=G(y_{t},\phi_{t-r})$, which leads to the appearance of $\|y,y'\|^{2}_{\mathbf{X},2\alpha,\theta,[0,\tilde{r}]}$. By Theorem \ref{the001}, we obtain $y'_{t}=G(y_{t},\phi_{t-r})$, so we can modify the proof of Lemma \ref{lem008} to arrive at the next result.

			\begin{Lemma}\label{lem101}
				If $(y,y')\in\mathcal{D}^{2\alpha}_{\mathbf{X},\theta}([0,\tilde{r}]) $ with $y'_{t}=G(y_{t},\phi_{t-r})$, and $\tilde{r}\in(0,\hat{r}]$ , then we have 
				\begin{align*}
					(\int^{.}_{0}S_{\cdot,s}G(y_{s},\phi_{s-r})\cdot\textup{d}{\bar{\mathbf{X}}}_{s},G(y_{.},\phi_{.-r}))\in \mathcal{D}^{2\alpha}_{\mathbf{X},\theta}([0,\tilde{r}]),
				\end{align*}
				and
				\begin{align*}
					&~~~~\|(\int^{.}_{0}S_{.,s}G(y_{s},\phi_{s-r})\cdot\textup{d}{\bar{\mathbf{X}}}_{s},G(y_{.},\phi_{.-r}))\|_{\mathbf{X},2\alpha, \theta,[0,\tilde{r}]}\\
					&\leq C_{5} [(1+\|y,y'\|_{\mathbf{X},2\alpha,\theta,[0,\tilde{r}]}	\|\phi,\phi'\|_{\mathbf{X},2\alpha,\theta,[-r,0]}+
					\|\phi,\phi'\|^{2}_{\mathbf{X},2\alpha,\theta,[-r,0]})\tilde{r}^{\alpha-\sigma_{2}}+\|y_{0}\|_{\theta}+ \|\phi'_{-r}\|_{\theta-\alpha}],
				\end{align*}
				where $C_{5}$ depends on   $ \bar{\mathbf{X}}$ and $G$.
			\end{Lemma}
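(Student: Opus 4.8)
The plan is to run the proof of Lemma \ref{lem008} essentially line by line, but with two upgrades made possible by the extra information $y'_t=G(y_t,\phi_{t-r})$: in the composition step one invokes Corollary \ref{cor001} in place of Lemma \ref{lem001}, and in the convolution step one invokes part (iii) of Lemma \ref{lem006} in place of part (ii). Throughout I write $\lesssim$ for inequalities whose constants depend on $\bar{\mathbf{X}}$, $F$ and $G$, so that the $\rho_\alpha(\bar{\mathbf{X}})$ factors appearing in those lemmas are absorbed.

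First I would set
\[
(m_\cdot,m'_\cdot,\bar m'_\cdot):=\bigl(G(y_\cdot,\phi_{\cdot-r}),\,D_{x_1}G(y_\cdot,\phi_{\cdot-r})\circ y'_\cdot,\,D_{x_2}G(y_\cdot,\phi_{\cdot-r})\circ\phi'_{\cdot-r}\bigr).
\]
Because $y'_t=G(y_t,\phi_{t-r})$ and $\mathbf{H4}$ supplies the boundedness of the derivatives of the compositions (\ref{eq601})--(\ref{eq602}), Corollary \ref{cor001} applies componentwise (to each $G_i$), with $(y,y')\in\mathcal{D}^{2\alpha}_{\mathbf{X},\theta}([0,\tilde r])$ as the first controlled path and $(\phi,\phi')$ — restricted via $\mathbf{H2}$ to $[-r,\tilde r-r]\subseteq[-r,0]$ — as the second. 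This gives $(m,m',\bar m')\in\mathcal{D}^{2\alpha}_{\bar{\mathbf{X}},\theta-\sigma_2}([0,\tilde r])$ together with
\[
\|(m,m',\bar m')\|_{\bar{\mathbf{X}},2\alpha,\theta-\sigma_2,[0,\tilde r]}\lesssim 1+\|y,y'\|_{\mathbf{X},2\alpha,\theta,[0,\tilde r]}\bigl(1+\|\phi,\phi'\|_{\mathbf{X},2\alpha,\theta,[-r,0]}\bigr)+\|\phi,\phi'\|^2_{\mathbf{X},2\alpha,\theta,[-r,0]}.
\]
The decisive improvement over Lemma \ref{lem008} is that this bound is linear (not quadratic) in $\|y,y'\|$: this is exactly what hypotheses (\ref{eq601})--(\ref{eq602}) were imposed for, since they force the increments of $D_{x_1}G(y,\phi)\circ G(y,\phi)$, with values in $\mathcal{B}_{\theta-2\alpha-\sigma_2}$, to be Lipschitz in $y$ rather than merely quadratic.

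Next I would feed $(m,m',\bar m')$ into part (iii) of Lemma \ref{lem006}, taken with base index $\theta-\sigma_2$, horizon $\tilde r\le\hat r\le1$, and shift $\sigma:=\sigma_2$ (when $\sigma_2=0$, take any $\sigma\in(0,\tilde\alpha)$ and use $\mathcal{B}_{\theta+\sigma}\hookrightarrow\mathcal{B}_\theta$). Since the Gubinelli derivative of a rough convolution is its integrand, $\zeta'=m=G(y_\cdot,\phi_{\cdot-r})$, and we obtain $(\zeta,\zeta')\in\mathcal{D}^{2\alpha}_{\mathbf{X},\theta}([0,\tilde r])$ with
\[
\|\zeta,\zeta'\|_{\mathbf{X},2\alpha,\theta,[0,\tilde r]}\lesssim\|(m,m',\bar m')\|_{\bar{\mathbf{X}},2\alpha,\theta-\sigma_2,[0,\tilde r]}\,\tilde r^{\alpha-\sigma_2}+\|m'_0\|_{\theta-\alpha-\sigma_2}+\|\bar m'_0\|_{\theta-\alpha-\sigma_2}+\|m_0\|_{\theta-\sigma_2}.
\]
It then remains to control the three boundary terms by $\mathbf{H4}$: boundedness of $G$ gives $\|m_0\|_{\theta-\sigma_2}=\|G(y_0,\phi_{-r})\|_{\theta-\sigma_2}\lesssim\|y_0\|_\theta+\|\phi_{-r}\|_\theta$, and since $G(y_0,\phi_{-r})\in\mathcal{B}_{\theta-\sigma_2}\hookrightarrow\mathcal{B}_{\theta-\alpha}$ also $\|m'_0\|_{\theta-\alpha-\sigma_2}\lesssim\|y'_0\|_{\theta-\alpha}\lesssim\|y_0\|_\theta+\|\phi_{-r}\|_\theta$; boundedness of $D_{x_2}G$ gives $\|\bar m'_0\|_{\theta-\alpha-\sigma_2}=\|D_{x_2}G(y_0,\phi_{-r})\circ\phi'_{-r}\|_{\theta-\alpha-\sigma_2}\lesssim\|\phi'_{-r}\|_{\theta-\alpha}$. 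Substituting the Step~1 display and collecting — with $\phi$ fixed, so that $\|\phi_{-r}\|_\theta\le\|\phi,\phi'\|_{\mathbf{X},2\alpha,\theta,[-r,0]}$ is absorbed into $C_5$ — yields the asserted estimate, while the membership $(\zeta,\zeta')\in\mathcal{D}^{2\alpha}_{\mathbf{X},\theta}([0,\tilde r])$ is exactly the conclusion of Lemma \ref{lem006}(iii).

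I expect the only genuine obstacle to be the regularity bookkeeping: one must check that the composition in Step~1 loses precisely $\sigma_2$ space derivatives, so that the $\sigma=\sigma_2$ smoothing of Lemma \ref{lem006}(iii) brings one back to the ambient space $\mathcal{B}_\theta$, and that (\ref{eq601})--(\ref{eq602}) are invoked in the exact form demanded by Corollary \ref{cor001} (target space $\mathcal{B}_{\theta-2\alpha-\sigma_2}$, the two arguments treated separately), together with the harmless edge case $\sigma_2=0$. Everything else is structurally identical to the computations already carried out for Lemma \ref{lem008}.
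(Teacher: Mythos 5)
Your proposal matches the paper's proof exactly: the paper also applies Corollary~\ref{cor001} to obtain the bound on $\|(m,m',\bar m')\|_{\bar{\mathbf{X}},2\alpha,\theta-\sigma_2}$ that is linear (rather than quadratic) in $\|y,y'\|$, and then concludes via part (iii) of Lemma~\ref{lem006}. Your expansion of the boundary-term bookkeeping (in particular the estimate $\|m'_0\|_{\theta-\alpha-\sigma_2}\lesssim\|y_0\|_\theta+\|\phi_{-r}\|_\theta$ and $\|\bar m'_0\|_{\theta-\alpha-\sigma_2}\lesssim\|\phi'_{-r}\|_{\theta-\alpha}$) is a correct and careful elaboration of what the paper leaves implicit.
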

			\begin{proof}
				By Corollary \ref{cor001}, we know 
				\begin{align*}
					&~~~\|G(y_{.},\phi_{.-r}),    G^{\prime}(y_{.},\phi_{.-r}), \bar{G}^{\prime}(y_{.},\phi_{.-r})\|_{ \bar{\mathbf{X}},2\alpha,\theta-\sigma_{2},[0,\tilde{r}]}\\
					&\lesssim 
					1+\|y,y'\|_{\mathbf{X},2\alpha,\theta,[0,\tilde{r}]}	\|\phi,\phi'\|_{\mathbf{X},2\alpha,\theta,[-r,0]}+
					\|\phi,\phi'\|^{2}_{\mathbf{X},2\alpha,\theta,[-r,0]}.
				\end{align*}
				Then,  we complete the proof due to $(iii)$ of Lemma \ref{lem006}.
			\end{proof}
			
			By Lemmas \ref{lem007} and \ref{lem101}, it is straightforward to obtain the next lemma. 
			\begin{Lemma}\label{lem102}
				Under the same settings of Lemma $\ref{lem101}$, we further assume that $(y,y')$ solves  $(\ref{eq109})$-$(\ref{eq110})$ on $[0,\tilde{r}]$. Then, we have 
				\begin{align}\label{eq509}
					\|y,y'\|_{\mathbf{X},2\alpha, \theta, [0,\tilde{r}]}\leq C_{6} (1+\|y_{0}\|_{\theta}+ 
					\tilde{r}^{\lambda_{3}}\|y,y'\|_{\mathbf{X},2\alpha, \theta, [0,\tilde{r}]}),
				\end{align}
				where $\lambda_{3}=(\alpha-\sigma_{2})\wedge(1-2\alpha)\wedge(1-\sigma_{1})$, and 
				$C_{6}$ depends on $ \bar{\mathbf{X}}$, $F$, $G$ and $\phi$.
			\end{Lemma}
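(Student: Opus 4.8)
The plan is to insert the mild–solution formula into the controlled‑path norm and estimate the three resulting pieces separately. Since $(y,y')$ solves $(\ref{eq109})$–$(\ref{eq110})$ on $[0,\tilde r]$ with $y'_t=G(y_t,\phi_{t-r})$, we may write
\begin{align*}
(y,y') &= (S_{\cdot,0}y_0,\,0) + \Big(\int_0^\cdot S_{\cdot,s}F(y_s,\phi_{s-r})\,\textup{d}s,\,0\Big) \\
&\quad\ + \Big(\int_0^\cdot S_{\cdot,s}G(y_s,\phi_{s-r})\cdot\textup{d}\bar{\mathbf{X}}_s,\,G(y_\cdot,\phi_{\cdot-r})\Big),
\end{align*}
each summand belonging to $\mathcal{D}^{2\alpha}_{\mathbf{X},\theta}([0,\tilde r])$, so by subadditivity of $\|\cdot\|_{\mathbf{X},2\alpha,\theta,[0,\tilde r]}$ it is enough to bound the three terms.

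For the first term I would invoke $(\ref{eq117})$, giving $\|(S_{\cdot,0}y_0,0)\|_{\mathbf{X},2\alpha,\theta,[0,\tilde r]}\le C_1\|y_0\|_\theta$. For the deterministic‑drift term I would apply Lemma $\ref{lem007}$ with $2\alpha$ in place of $2\tilde\alpha$ (its proof only uses the semigroup bounds $(\ref{eq050})$ and is insensitive to which admissible Hölder exponent is chosen), which yields a bound $\lesssim (1+\|y\|_{\infty,\theta,[0,\tilde r]}+\|\phi\|_{\infty,\theta,[-r,0]})\,\tilde r^{(1-2\alpha)\wedge(1-\sigma_1)}$; then I use $\|y\|_{\infty,\theta,[0,\tilde r]}\le\|y,y'\|_{\mathbf{X},2\alpha,\theta,[0,\tilde r]}$ and absorb $\|\phi\|_{\infty,\theta,[-r,0]}$ into the constant. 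For the rough‑integral term I would apply Lemma $\ref{lem101}$ directly, which (thanks to $y'_t=G(y_t,\phi_{t-r})$ and Corollary $\ref{cor001}$) is \emph{linear} in $\|y,y'\|_{\mathbf{X},2\alpha,\theta,[0,\tilde r]}$, of the form $\lesssim\bigl(1+\|y,y'\|_{\mathbf{X},2\alpha,\theta,[0,\tilde r]}\|\phi,\phi'\|_{\mathbf{X},2\alpha,\theta,[-r,0]}+\|\phi,\phi'\|^2_{\mathbf{X},2\alpha,\theta,[-r,0]}\bigr)\tilde r^{\alpha-\sigma_2}+\|y_0\|_\theta+\|\phi'_{-r}\|_{\theta-\alpha}$.

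Summing the three estimates and using $\tilde r\le\hat r\le 1$, all terms not involving $\|y,y'\|_{\mathbf{X},2\alpha,\theta,[0,\tilde r]}$ — the powers of $\|\phi,\phi'\|_{\mathbf{X},2\alpha,\theta,[-r,0]}$, $\|\phi\|_{\infty,\theta,[-r,0]}$, $\|\phi'_{-r}\|_{\theta-\alpha}$, and the bare additive constants — are fixed and get absorbed into a single constant $C_6$ depending on $\bar{\mathbf X}$, $F$, $G$ and $\phi$, while the $\|y_0\|_\theta$ contributions give the $(1+\|y_0\|_\theta)$ part. The factor $\|y,y'\|_{\mathbf{X},2\alpha,\theta,[0,\tilde r]}$ appears on the right only multiplied by $\tilde r^{(1-2\alpha)\wedge(1-\sigma_1)}$ (from the $F$‑term) and by $\tilde r^{\alpha-\sigma_2}$ (from the $G$‑term, after absorbing $\|\phi,\phi'\|$), hence by $\tilde r^{\lambda_3}$ with $\lambda_3=(\alpha-\sigma_2)\wedge(1-2\alpha)\wedge(1-\sigma_1)$, which is exactly $(\ref{eq509})$.

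The only genuinely delicate point is that the $G$‑contribution be linear, not quadratic, in $\|y,y'\|_{\mathbf{X},2\alpha,\theta,[0,\tilde r]}$ — this is precisely what Lemma $\ref{lem101}$ (built on the structural hypotheses $(\ref{eq601})$–$(\ref{eq602})$ on $G$ together with the identity $y'_t=G(y_t,\phi_{t-r})$) provides; a quadratic dependence would make $(\ref{eq509})$ useless for closing an a priori bound uniform on $[0,r]$. Everything else is routine bookkeeping of the norms, which is why the result follows immediately from Lemmas $\ref{lem007}$ and $\ref{lem101}$.
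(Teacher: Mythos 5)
Your proposal is correct and reproduces exactly the argument the paper has in mind: the paper's proof of Lemma \ref{lem102} consists of the single remark that it follows from Lemmas \ref{lem007} and \ref{lem101}, and your write-up simply makes explicit the three-term decomposition (semigroup part bounded by (\ref{eq117}), drift part by Lemma \ref{lem007} applied with Hölder exponent $\alpha$, rough-convolution part by Lemma \ref{lem101}), the absorption of the $\phi$-dependent quantities into $C_6$, and the identification of $\lambda_3$ as the minimum of the two exponents. Your observation that Lemma \ref{lem101} is the crucial input—since it makes the rough-integral contribution \emph{linear} rather than quadratic in $\|y,y'\|_{\mathbf{X},2\alpha,\theta,[0,\tilde r]}$—is precisely the point of introducing that lemma, and your remark that the statement of Lemma \ref{lem007} carries over verbatim with $\tilde\alpha$ replaced by $\alpha$ is also correct, since its proof uses only the semigroup estimates (\ref{eq050}).
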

			Now, we can show a priori estimate of $(y,y')$.
			\begin{Lemma}\label{lem103}
				Under the same settings of Lemma $\ref{lem102}$, we have 
				\begin{align*}
					\|y\|_{\infty, \theta, [0,\tilde{r}]}\leq C_{7} u e^{C_{8}\tilde{r}},
				\end{align*}
				where   $u=1\vee\|y_{0}\|_{\theta}$,
				$C_{7}$ and $C_{8}$ depend on $ \bar{\mathbf{X}}$, $F$, $G$ and $\phi$.
			\end{Lemma}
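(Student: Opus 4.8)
The plan is to localize the estimate to subintervals short enough that the nonlinear contribution is absorbed into the left-hand side, and then to propagate the bound across $[0,\tilde{r}]$ by a discrete Gronwall iteration. First I note that Lemma~\ref{lem102} continues to hold, with $\|y_{0}\|_{\theta}$ replaced by $\|y_{a}\|_{\theta}$, on every subinterval $[a,a+h]\subset[0,\tilde{r}]$. Indeed, by the semigroup property of $\{S_{t}\}$ and the additivity of the rough convolution (Lemma~\ref{lem003}), the restriction of $y$ to $[a,a+h]$ satisfies $y_{t}=S_{t,a}y_{a}+\int_{a}^{t}S_{t,s}F(y_{s},\phi_{s-r})\,\textup{d}s+\int_{a}^{t}S_{t,s}G(y_{s},\phi_{s-r})\cdot\textup{d}\bar{\mathbf{X}}_{s}$ with $y'_{t}=G(y_{t},\phi_{t-r})$, and since $a+h\leq\tilde{r}\leq\hat{r}\leq r$ the delay argument $\phi_{\cdot-r}$ is still the fixed controlled path supplied by $\mathbf{H2}$. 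Running the proof of Lemma~\ref{lem102} on $[a,a+h]$ therefore gives
\[
\|y,y'\|_{\mathbf{X},2\alpha,\theta,[a,a+h]}\leq C_{6}\bigl(1+\|y_{a}\|_{\theta}+h^{\lambda_{3}}\|y,y'\|_{\mathbf{X},2\alpha,\theta,[a,a+h]}\bigr),
\]
with the \emph{same} constant $C_{6}$, since $C_{6}$ depends on $\bar{\mathbf{X}}$ only through $\rho_{\alpha,[0,r]}(\bar{\mathbf{X}})$ (which dominates $\rho_{\alpha,[a,a+h]}(\bar{\mathbf{X}})$) and on $\phi$ only through $\|\phi,\phi'\|_{\mathbf{X},2\alpha,\theta,[-r,0]}$, all of which are fixed.

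Next I fix $h\in(0,\hat{r}]$ so small that $C_{6}h^{\lambda_{3}}\leq\frac{1}{2}$; note that $h$ depends only on $\bar{\mathbf{X}},F,G,\phi$. Absorbing the last term in the displayed inequality yields $\|y,y'\|_{\mathbf{X},2\alpha,\theta,[a,a+h]}\leq 2C_{6}(1+\|y_{a}\|_{\theta})$ for every such subinterval, whence $\|y\|_{\infty,\theta,[a,a+h]}\leq 2C_{6}(1+\|y_{a}\|_{\theta})$ and $1+\|y_{a+h}\|_{\theta}\leq(1+2C_{6})(1+\|y_{a}\|_{\theta})$.

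It remains to iterate. Put $N:=\lceil\tilde{r}/h\rceil$ and $t_{k}:=k\tilde{r}/N$ for $k=0,\dots,N$, so each $[t_{k-1},t_{k}]$ has length $\leq h$. Applying the previous step successively on $[t_{0},t_{1}],\dots,[t_{N-1},t_{N}]$ gives $1+\|y_{t_{k}}\|_{\theta}\leq(1+2C_{6})^{k}(1+\|y_{0}\|_{\theta})$, so that for $t\in[t_{k-1},t_{k}]$ one has $\|y_{t}\|_{\theta}\leq 2C_{6}(1+\|y_{t_{k-1}}\|_{\theta})\leq 2C_{6}(1+2C_{6})^{N-1}(1+\|y_{0}\|_{\theta})$, and hence $\|y\|_{\infty,\theta,[0,\tilde{r}]}\leq 2C_{6}(1+2C_{6})^{N-1}(1+\|y_{0}\|_{\theta})$. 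Since $N-1\leq\tilde{r}/h$ we get $(1+2C_{6})^{N-1}\leq e^{C_{8}\tilde{r}}$ with $C_{8}:=h^{-1}\ln(1+2C_{6})$, and $1+\|y_{0}\|_{\theta}\leq 2u$ with $u=1\vee\|y_{0}\|_{\theta}$, so the claim holds with $C_{7}:=4C_{6}$; both constants depend only on $\bar{\mathbf{X}},F,G,\phi$. The one step deserving care is the first — checking that the argument of Lemma~\ref{lem102} transfers verbatim to a shifted subinterval with a constant independent of $a$ — whereas the localization and the discrete Gronwall iteration are entirely routine.
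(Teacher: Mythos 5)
Your proposal is correct and follows essentially the same approach as the paper: localize to short subintervals so that the $\tilde{r}^{\lambda_{3}}$-term from Lemma~\ref{lem102} can be absorbed, then propagate the bound by a discrete Gronwall iteration, noting that $C_{6}$ is uniform in the subinterval because $\|\phi,\phi'\|_{\mathbf{X},2\alpha,\theta,[-r,0]}$ and $\rho_{\alpha,[0,r]}(\bar{\mathbf{X}})$ are fixed. Your write-up is somewhat more explicit than the paper's sketch (which defers to Lemma~3.6 of \cite{MR4431448}), particularly in justifying that the constant transfers to shifted subintervals, but the underlying idea and the resulting constants are of the same form.
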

			\begin{proof}
				The proof is similar to that of Lemma 3.6 of \cite{MR4431448}. Here, we just give a sketch. 
				
				Observing (\ref{eq509}), we can choose large enough integer $M$ such that $C_{6}(\frac{\tilde{r}}{M})^{\lambda_{3}}\in(\frac{1}{4},\frac{1}{2})$. Then, we have  on $[0,\frac{\tilde{r}}{M}]$:
				\begin{align*}
					\|y\|_{\infty,\theta, [0,\frac{\tilde{r}}{M}] }\leq \|y,y'\|_{\mathbf{X},2\alpha, \theta, [0,\frac{\tilde{r}}{M}]}\leq 4C_{6}u.
				\end{align*}
				By concatenation argument, for $k=1,\cdots,M-1$, we further derive  on $[\frac{k\tilde{r}}{M}, \frac{(k+1)\tilde{r}}{M}]$:
				\begin{align*}
					\|y\|_{\infty,\theta, [\frac{k\tilde{r}}{M}, \frac{(k+1)\tilde{r}}{M}]}\leq \|y,y'\|_{\mathbf{X},2\alpha, \theta, [\frac{k\tilde{r}}{M},\frac{(k+1)\tilde{r}}{M}]}\leq (4C_{6})^{k+1}u.
				\end{align*}
				In fact, the assertion is reasonable since $\|\phi_{.-r},\phi'_{.-r}\|_{\mathbf{X},2\alpha,\theta}$ is fixed on $[0,r]$, which means that $C_{6}$ do not change in each iteration. Therefore, we obtain on $[0, \tilde{r}]$
				\begin{align*}
					\|y\|_{\infty,\theta, [0, \tilde{r}]}\leq (4C_{6})^{M}u.
				\end{align*}
				Noting $N<(4C_{6})^{\frac{1}{\lambda_{3}}}\tilde{r}$,  we can choose $C_{7}=(4C_{6})^{(4C_{6})^{\frac{1}{\lambda_{3}}}}$ and $C_{8}= \log(4C_{6})$. 
			\end{proof}
			
			{\bf \normalsize Step \uppercase\expandafter{\romannumeral 3}: Existence of global mild solution
			}

			Thanks to Theorem \ref{the001} and Lemma \ref{lem103}, we can ensure that the mild solution of   (\ref{eq109})-(\ref{eq110}) exists on $[0,r]$.
			\begin{Theorem}\label{the002}
				There exists  $ (y,y')\in\mathcal{D}^{2\tilde{\alpha}}_{\mathbf{X},\theta}([0,r])\cap\mathcal{D}^{2\alpha}_{\mathbf{X},\theta}([0,r])$ such that $(y,y')$ is the mild solution of $(\ref{eq109})$-$(\ref{eq110})$  on $[0,r]$ with $y_{t}'=G(y_{t},\phi_{t-r})$.
			\end{Theorem}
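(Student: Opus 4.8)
The plan is to run a continuation argument, patching together the local mild solutions produced by Theorem \ref{the001} and using the a priori estimate of Lemma \ref{lem103} to keep the life span of these local solutions bounded away from zero. The decisive point is that on $[0,r]$ the delayed argument $y_{s-r}$ always equals the \emph{fixed} history $\phi_{s-r}$ (since $s-r\in[-r,0]$ for $s\in[0,r]$), so every constant appearing in Theorem \ref{the001} and in Lemmas \ref{lem101}--\ref{lem103} depends only on $\|y_{0}\|_{\theta}$ and on the fixed data $\bar{\mathbf{X}},F,G,\phi$; in particular the resulting a priori bound is uniform along the whole interval.

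First I would record two uniform facts. \textbf{(a) Uniform life span.} Inspecting the proof of Theorem \ref{the001}, the existence time $\tilde r$ is obtained by making quantities of the form $C(1+\|y,y'\|^{2}_{\mathbf X,2\tilde\alpha,\theta})\tilde r^{\lambda_{1}\wedge\lambda_{2}}$ small on a ball whose radius grows with $\|y^{\star}\|_{\theta}$; hence for every $M>0$ there is $\delta_{\ast}=\delta_{\ast}(M)>0$ such that, whenever the initial datum satisfies $\|y^{\star}\|_{\theta}\le M$, the conclusion of Theorem \ref{the001} holds on an interval of length $\delta_{\ast}$. Moreover the same argument applies, with the same data, to the Cauchy problem started at any $t_{0}\in[0,r)$ with datum $y_{t_{0}}\in\mathcal B_{\theta}$, because for $s\in[t_{0},r]$ the delay term is still $\phi_{s-r}$. \textbf{(b) Uniform a priori bound.} By Lemma \ref{lem103} (combined with Lemma \ref{lem102}, applied chunkwise on subintervals of length $\le\hat r$ exactly as in the proof of Lemma \ref{lem103} to cover the case $r>1$, the constants being unchanged since $\phi$ is fixed), any mild solution of $(\ref{eq109})$--$(\ref{eq110})$ on $[0,T]\subseteq[0,r]$ with $y'_{t}=G(y_{t},\phi_{t-r})$ satisfies $\|y\|_{\infty,\theta,[0,T]}\le M$, where $M:=C_{7}(1\vee\|y_{0}\|_{\theta})e^{C_{8}r}$ depends only on the fixed data.

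Then I would patch. Theorem \ref{the001} gives a solution on $[0,\tilde r_{0}]$, $\tilde r_{0}>0$. Assume it has been extended to a solution on $[0,t_{0}]$ with $t_{0}<r$, with Gubinelli derivative $G(\cdot,\phi_{\cdot-r})$. By (b), $\|y_{t_{0}}\|_{\theta}\le M$; by (a), there is a local solution of the restarted problem on $[t_{0},t_{0}+\delta]$ with $\delta:=\min\{\delta_{\ast}(M),\,r-t_{0}\}>0$ and Gubinelli derivative $G(\cdot,\phi_{\cdot-r})$. The concatenation of the two pieces again lies in $\mathcal D^{2\tilde\alpha}_{\mathbf{X},\theta}([0,t_{0}+\delta])$ (the controlled-path property is local, and the joint $\mathcal C^{\alpha,2\alpha}$-regularity of $\bar R^{y}$ across $t_{0}$ follows from the defining relation together with the Hölder bound $(\ref{eq103})$), and it is again a mild solution, because the rough convolution is additive, $\int_{0}^{t}S_{t,s}G(y_{s},\phi_{s-r})\cdot\mathrm d\bar{\mathbf{X}}_{s}=S_{t,t_{0}}\int_{0}^{t_{0}}(\cdots)+\int_{t_{0}}^{t}(\cdots)$, which is immediate from the partition-limit definition in Lemma \ref{lem003} and the semigroup property. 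Since the step size is bounded below by the fixed number $\delta_{\ast}(M)$ as long as $t_{0}<r$, finitely many steps reach $t_{0}=r$, yielding the solution on $[0,r]$; the upgrade to $\mathcal D^{2\alpha}_{\mathbf{X},\theta}$ is supplied at each stage (or once at the end) by Lemma \ref{lem011}.

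The main obstacle is exactly the non-degeneracy of the life span: a priori the local existence time from Theorem \ref{the001} could shrink at each restart and the patching could stall short of $r$. This is resolved by fact (a) together with the \emph{uniform} a priori bound (b), both of which rest on the observation that on $[0,r]$ the equation sees only the fixed history $\phi$, so no constant deteriorates along the iteration. A secondary, purely technical point is checking that concatenation respects all seven seminorms in $\|\cdot\|_{\bar{\mathbf{X}},2\alpha,\theta}$ near the junction points, which is handled by the additivity of the convolution and the semigroup estimates $(\ref{eq050})$.
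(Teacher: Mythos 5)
Your proposal is correct and follows essentially the same route as the paper: take the uniform a priori bound $M=C_{7}(1\vee\|y_{0}\|_{\theta})e^{C_{8}r}$ from Lemma \ref{lem103}, observe that the local existence time from Theorem \ref{the001} depends only on the size of the restarted datum (and on the fixed data $\bar{\mathbf{X}},F,G,\phi$, since on $[0,r]$ the delay term always points to the fixed history $\phi$), and iterate in finitely many steps. The paper fixes a single integer $M$ and marches in equal chunks $r/M$, while you run an open-ended continuation with a uniform lower bound $\delta_{\ast}(M)$ on the step size and add an explicit check that concatenated pieces remain delayed controlled paths with a mild-solution identity across junctions via additivity of the rough convolution; this extra bookkeeping is a worthwhile refinement but not a substantively different argument.
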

			\begin{proof}
				Take $u=1\vee \|y_{0}\|_{\theta}$, and set $\tilde{u}= C_{7}ue^{C_{8}T}$. As we do in Theorem \ref{the001}, we choose enough large integer $M$ such that $(y,y')$ solves (\ref{eq109})-(\ref{eq110})
				on $[0,\frac{r}{M}]$ according to $\|y_{0}\|_{\theta}\leq \tilde{u}$. Then, by Lemma \ref{lem103}, it holds that 
				\begin{align*}
					\|y_{\frac{r}{M}}\|_{\theta}\leq	\sup_{t\in[0,\frac{r}{M}]}\|y_{t}\|_{\theta}\leq  C_{7} u e^{C_{8}\frac{r}{M}}<\tilde{u}.
				\end{align*}
				Thus, we can use Theorem \ref{the001} again to obtain the existence of the mild solution of  (\ref{eq109})-(\ref{eq110}) with  initial value $y_{\frac{r}{M}}$ on  $[\frac{r}{M},\frac{2r}{M}]$. By repeating this  procedure, we can extend the interval of the solution from $[0,\frac{r}{M}]$ to  $[0,r]$.
			\end{proof}

			\begin{Theorem}\label{Cor002}
				There exists  $ (y,y')\in\mathcal{D}^{2\tilde{\alpha}}_{\mathbf{X},\theta}([0,T])\cap\mathcal{D}^{2\alpha}_{\mathbf{X},\theta}([0,T])$ such that $(y,y')$ is the mild solution of $(\ref{eq109})$-$(\ref{eq110})$  on $[0,T]$ with $y_{t}'=G(y_{t},y_{t-r})$. 
			\end{Theorem}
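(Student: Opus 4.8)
The statement extends the existence result of Theorem \ref{the002} from the first delay window $[0,r]$ to an arbitrary $[0,T]$, and the natural approach is a finite concatenation argument over the windows $[kr,(k+1)r]$, $k=0,1,\dots,N$ with $N=\lceil T/r\rceil-1$, using Theorem \ref{the002} (together with the a priori estimate of Lemma \ref{lem103}, which is already built into it) as the engine at each step. The key observation is that, since the coefficients $A$, $F$, $G$ are time-independent and $\bar{\mathbf{X}}\in\bar{\mathcal{C}}^{\alpha}([0,T],\mathbb{R}^d)$ already carries all the delayed area data on every sub-window, solving (\ref{eq109})--(\ref{eq110}) on $[kr,(k+1)r]$ with the already-constructed piece of the solution on $[(k-1)r,kr]$ playing the role of the history $\phi$ is, after a translation in time, exactly the situation covered by Theorem \ref{the002}.

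First I would set up the induction. The base case $k=0$ is Theorem \ref{the002}: there is $(y,y')\in\mathcal{D}^{2\tilde{\alpha}}_{\mathbf{X},\theta}([0,r])\cap\mathcal{D}^{2\alpha}_{\mathbf{X},\theta}([0,r])$ solving (\ref{eq109})--(\ref{eq110}) with $y'_{t}=G(y_{t},\phi_{t-r})=G(y_{t},y_{t-r})$, since $y_{t}=\phi_{t}$ on $[-r,0]$. Assume inductively that a solution $(y,y')$ has been constructed on $[0,kr\wedge T]$, lying in $\mathcal{D}^{2\tilde{\alpha}}_{\mathbf{X},\theta}\cap\mathcal{D}^{2\alpha}_{\mathbf{X},\theta}$ of that interval and satisfying $y'_{t}=G(y_{t},y_{t-r})$. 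For the inductive step, restrict $(y,y')$ to $[(k-1)r,kr]$; by the inductive hypothesis the pair $(\psi,\psi'):=(y,y')|_{[(k-1)r,kr]}$ is a controlled path based on $\mathbf{X}$ in $\mathcal{D}^{2\tilde{\alpha}}_{\mathbf{X},\theta}([(k-1)r,kr])\cap\mathcal{D}^{2\alpha}_{\mathbf{X},\theta}([(k-1)r,kr])$, hence satisfies the analogue of $\mathbf{H2}$ on that window, while $y_{kr}\in\mathcal{B}_{\theta}$ supplies the initial value. Applying Theorem \ref{the002} on the window $[kr,(k+1)r\wedge T]$ (of length $\le r$) with this history yields $(\tilde y,\tilde y')\in\mathcal{D}^{2\tilde{\alpha}}_{\mathbf{X},\theta}([kr,(k+1)r\wedge T])\cap\mathcal{D}^{2\alpha}_{\mathbf{X},\theta}([kr,(k+1)r\wedge T])$ with $\tilde y_{kr}=y_{kr}$, $\tilde y'_{t}=G(\tilde y_{t},y_{t-r})$, and
\begin{align*}
	\tilde y_{t}=S_{t,kr}\,y_{kr}+\int_{kr}^{t}S_{t,s}F(\tilde y_{s},y_{s-r})\,\textup{d}s+\int_{kr}^{t}S_{t,s}G(\tilde y_{s},y_{s-r})\cdot\textup{d}\bar{\mathbf{X}}_{s}.
\end{align*}

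It then remains to glue. I would define $y$ on $[0,(k+1)r\wedge T]$ by concatenating the old piece with $(\tilde y,\tilde y')$; the two pieces match at $t=kr$ both in value ($y_{kr}$) and in Gubinelli derivative, since $y'_{kr}=G(y_{kr},y_{(k-1)r})=\tilde y'_{kr}$. That the concatenation is again a (delayed) controlled path on the union of the intervals is the standard concatenation lemma: for $s<kr<t$ one splits $\delta y_{t,s}=\delta y_{t,kr}+\delta y_{kr,s}$ and likewise decomposes $\bar R^{y}_{t,s}$, then uses the Hölder and remainder bounds already available on each half together with $(t-s)^{\gamma}\ge\max\{(t-kr)^{\gamma},(kr-s)^{\gamma}\}$; this gives finiteness of $\|y,y'\|_{\mathbf{X},2\tilde{\alpha},\theta}$ and $\|y,y'\|_{\mathbf{X},2\alpha,\theta}$ on the full window. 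Finally, using the flow property $S_{t,0}=S_{t,kr}S_{kr,0}$ and the additivity of the deterministic and rough convolutions across $kr$ (the latter being immediate from the sewing construction underlying Lemma \ref{lem003}), so that $\int_{0}^{t}=S_{t,kr}\int_{0}^{kr}+\int_{kr}^{t}$, one substitutes the inductive mild-equation identity at $t=kr$ into $S_{t,kr}y_{kr}$ and converts the equation for $\tilde y$ into (\ref{eq109}) for the glued $y$ with lower limit $0$. This closes the induction; after $N+1$ steps the solution is defined on $[0,T]$ with $y'_{t}=G(y_{t},y_{t-r})$ throughout.

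The step requiring the most care — though it is bookkeeping rather than a new difficulty — is this last concatenation: verifying that joining two delayed controlled paths which agree with their Gubinelli derivatives at the junction again lands in $\mathcal{D}^{2\alpha}_{\mathbf{X},\theta}$ and $\mathcal{D}^{2\tilde{\alpha}}_{\mathbf{X},\theta}$ on the union, and the parallel additivity statement for the rough convolution across the junction point. Everything else is a direct appeal to Theorem \ref{the002}, using only the time-homogeneity of the coefficients and of $\bar{\mathbf{X}}$, so that no delay window interval shrinks in the iteration and finitely many steps suffice.
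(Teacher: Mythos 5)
Your proposal takes essentially the same approach as the paper: iterate Theorem \ref{the002} over successive windows $[kr,(k+1)r]$, using the piece of the solution constructed on $[(k-1)r,kr]$ as the new history datum, and invoke the uniform bound from the a priori estimate so that finitely many steps reach $[0,T]$. The paper's own proof is considerably more terse (it simply rewrites the equation with the solution on $[0,r]$ as the new history and says ``by further iteration''), so your filling-in of the gluing details — the matching of value and Gubinelli derivative at $t=kr$, the decomposition of $R^{y}_{t,s}$ across the junction, and the use of the flow property $S_{t,0}=S_{t,kr}S_{kr,0}$ together with additivity of the rough convolution to convert the windowed mild equation into the global one — is a sound elaboration of exactly the argument the paper intends rather than a different route.
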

			\begin{proof}
				By Theorem \ref{the002}, we know that $(y,y')$ is the mild solution of (\ref{eq109})-(\ref{eq110}). Then, we can consider (\ref{eq109})$-$(\ref{eq110}) as  follows:
					\begin{align}
					y_{t}&=S_{t,0}y_{0}+\int^{t}_{0}S_{t,s}F(y_{s},y_{s-r})\textup{d}{s}+\int^{t}_{0}S_{t,s}G(y_{s},y_{s-r})\cdot\textup{d}\bar{\mathbf{X}}_{s},\label{eq1091}\\
					y_{t}&=y_{t}, ~~ t\in[0,r].\label{eq1101}
				\end{align}
We note that 	$(y,y')\in 
\mathcal{D}^{2\tilde{\alpha}}_{\mathbf{X},\theta}([0,r])\cap\mathcal{D}^{2\alpha}_{\mathbf{X},\theta}([0,r])$. Thus,  we can use 		Theorem \ref{the002} to prove that there exists the mild solution of (\ref{eq1091})-(\ref{eq1101}) on $[r,2r]$. In other  words,  (\ref{eq109})-(\ref{eq110}) has the mild solution on  $[0,2r]$. By further iteration, the global mild solution on $[0,T]$ can be obtained.
			\end{proof}

			\subsection{Stability of solutions}
			
			In the subsection, we hope to study the distance between each solution of two delay rough PDEs.

			Let $\mathbf{Y}=(Y, \mathbb{Y})\in\mathcal{C}^{\alpha}([0,T], \mathbb{R}^d)$, $\bar{\mathbf{Y}}=(Y, \mathbb{Y},\mathbb{Y}(-r))\in\bar{\mathcal{C}}^{\alpha}([0,T], \mathbb{R}^d)$ with  $T>0$, $r>0$, $\alpha\in(\frac{1}{3},\frac{1}{2}]$. We assume  that
			$(\psi,\psi')$ is a controlled path based on $\mathbf{Y}$ belonging to $\mathcal{D}^{2\tilde{\alpha}}_{\mathbf{Y},\theta}([-r,0])\cap\mathcal{D}^{2\alpha}_{\mathbf{Y},\theta}([-r,0])$ with $\theta\in\mathbb{R}$. Considering
			\begin{align}
				\textup{d}y_{t}&=[A y_{t}+F(y_{t}, y_{t-r})]\textup{d}t+G(y_{t},y_{t-r})\cdot\textup{d}\bar{\mathbf{X}}_{t},\label{eq201}\\
				y_{t}&=\phi_{t}, ~~ t\in[-r,0],\label{eq202}
			\end{align}
			and 
			\begin{align}
				\textup{d}z_{t}&=[A z_{t}+F(z_{t}, z_{t-r})]\textup{d}t+G(z_{t},z_{t-r})\cdot\textup{d}\bar{\mathbf{Y}}_{t},\label{eq203}\\
				z_{t}&=\psi_{t}, ~~ t\in[-r,0],\label{eq205}
			\end{align}
			we would like to estimate the distance $\rho_{2\hat{\alpha},2\alpha, \theta}(y,z)$ between $(y,y')$ and $(z,z')$, , where $\hat{\alpha}\in(\sigma_{2},\alpha)$ satisfies $3\hat{\alpha}-2\alpha-\sigma_{2}>0$. 
			
			Theorem \ref{Cor002} allows us to know there exists  $(y,y')\in\mathcal{D}^{2\alpha}_{\mathbf{X},\theta}([0,T])$  and $(z,z')\in\mathcal{D}^{2\alpha}_{\mathbf{X},\theta}([0,T])$ as the mild solutions of $(\ref{eq201})$-$(\ref{eq202})$ and $(\ref{eq203})$-$(\ref{eq205})$, respectively. 
			We set 
			\begin{align*}
				\mathcal{M}&:=\|y,y'\|_{\mathbf{X},2\alpha, \theta, [0,T] }+\|\phi,\phi'\|_{\mathbf{X},2\alpha, \theta, [-r,0]}+\|z,z'\|_{\mathbf{Y},2\alpha, \theta, [0,T]}+\|\psi,\psi'\|_{\mathbf{Y},2\alpha, \theta, [-r,0]}\\
				&~~~+ 
				\rho_{\alpha, [0,T]}(\bar{\mathbf{X}})
				+\rho_{\alpha, [0,T]}(\bar{\mathbf{Y}}),
			\end{align*}
			and $\mathcal{U}:=\rho_{2\hat{\alpha},2\alpha, \theta, [-r,0]}(\phi,\psi)+\rho_{\alpha, [0,T]}(\bar{\mathbf{X}},\bar{\mathbf{Y}})$.

			
		
		\begin{Theorem}
			Let $(y,y')\in\mathcal{D}^{2\alpha}_{\mathbf{X},\theta}([0,T])$  and $(z,z')\in\mathcal{D}^{2\alpha}_{\mathbf{X},\theta}([0,T])$ be the mild solutions of $(\ref{eq201})$-$(\ref{eq202})$ and $(\ref{eq203})$-$(\ref{eq205})$, respectively. Then, we have 
			\begin{align*}
				\rho_{2\hat{\alpha},2\alpha, \theta, [0,T]}(y,z)\lesssim_{\mathcal{M}} \mathcal{U}.
			\end{align*}
		\end{Theorem}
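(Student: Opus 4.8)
The plan is to prove the estimate first on a short interval $[0,\tilde r]$ with $\tilde r\le\hat r$ and then to propagate it to $[0,T]$ by a finite concatenation whose length depends only on $\mathcal{M}$. On $[0,\tilde r]$ the delayed arguments are the given data $\phi,\psi$, so subtracting the mild solutions of $(\ref{eq201})$-$(\ref{eq202})$ and $(\ref{eq203})$-$(\ref{eq205})$ gives
\begin{align*}
y_t-z_t &= S_{t,0}(\phi_0-\psi_0)+\int_0^t S_{t,s}\big(F(y_s,\phi_{s-r})-F(z_s,\psi_{s-r})\big)\,\textup{d}s\\
&\quad +\Big(\int_0^t S_{t,s}G(y_s,\phi_{s-r})\cdot\textup{d}\bar{\mathbf{X}}_s-\int_0^t S_{t,s}G(z_s,\psi_{s-r})\cdot\textup{d}\bar{\mathbf{Y}}_s\Big),
\end{align*}
with Gubinelli derivatives $y'=G(y,\phi_{\cdot-r})$, $z'=G(z,\psi_{\cdot-r})$. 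I would bound the contribution of each of the three pieces to every seminorm entering $\rho_{2\hat\alpha,2\alpha,\theta,[0,\tilde r]}(y,z)$; the differences $y'-z'$ of derivatives and $R^y-R^z$ of remainders reduce, via the identity above, to the same three pieces.

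The first piece is handled by the smoothing bound $(\ref{eq050})$: in every relevant seminorm $\|S_{\cdot,0}(\phi_0-\psi_0)\|\lesssim\|\phi_0-\psi_0\|_\theta\le\mathcal{U}$. For the drift I would repeat the computation of Lemma $\ref{lem007}$ with $F(y_s,\phi_{s-r})$ replaced by $F(y_s,\phi_{s-r})-F(z_s,\psi_{s-r})$ and use the Lipschitz property $\mathbf{H3}$, producing a bound of order $\big(\|y-z\|_{\infty,\theta,[0,\tilde r]}+\|\phi-\psi\|_{\infty,\theta,[-r,0]}\big)\tilde r^{\lambda_1}$. The crux is the rough-convolution difference: I would regard $\int_0^\cdot S_{\cdot,s}G(y_s,\phi_{s-r})\cdot\textup{d}\bar{\mathbf{X}}_s$ and $\int_0^\cdot S_{\cdot,s}G(z_s,\psi_{s-r})\cdot\textup{d}\bar{\mathbf{Y}}_s$ as the convolutions $\zeta,\chi$ of Lemma $\ref{lem777}$, with integrands the delayed controlled paths $\big(G(y,\phi_{\cdot-r}),G'(y,\phi_{\cdot-r}),\bar G'(y,\phi_{\cdot-r})\big)\in\mathcal{D}^{2\hat\alpha}_{\bar{\mathbf{X}},\theta-\sigma_2}$ and $\big(G(z,\psi_{\cdot-r}),G'(z,\psi_{\cdot-r}),\bar G'(z,\psi_{\cdot-r})\big)\in\mathcal{D}^{2\hat\alpha}_{\bar{\mathbf{Y}},\theta-\sigma_2}$ supplied by Lemma $\ref{lem001}$ (here $\mathcal{D}^{2\alpha}\subset\mathcal{D}^{2\hat\alpha}$ since $\hat\alpha<\alpha$). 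Lemma $\ref{lem777}$, applied with the level shifted down by $\sigma_2$, then bounds $\rho_{2\hat\alpha,2\alpha,\theta,[0,\tilde r]}(\zeta,\chi)$ by $\tilde r^{\lambda}$ times the $\rho_{2\hat\alpha,2\alpha,\theta-\sigma_2}$-distance of the two integrands, plus $\rho_{\alpha,[0,\tilde r]}(\bar{\mathbf{X}},\bar{\mathbf{Y}})$, plus the $\mathcal{B}_{\theta-\sigma_2}$-distance of the integrands at time $0$; the last quantity is $\lesssim\|\phi_0-\psi_0\|_\theta+\|\phi_{-r}-\psi_{-r}\|_\theta\le\mathcal{U}$. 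To control the distance between the two integrands I would establish, or invoke, a $\rho$-distance analogue of the composition estimate Lemma $\ref{lem005}$ for delayed controlled paths, giving a bound $\lesssim_{\mathcal{M}}\rho_{2\hat\alpha,2\alpha,\theta,[0,\tilde r]}(y,z)+\rho_{2\hat\alpha,2\alpha,\theta,[-r,0]}(\phi,\psi)$.

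Collecting the three estimates yields, on $[0,\tilde r]$,
\begin{align*}
\rho_{2\hat\alpha,2\alpha,\theta,[0,\tilde r]}(y,z)\le C_{\mathcal{M}}\Big(\mathcal{U}+\tilde r^{\lambda_0}\,\rho_{2\hat\alpha,2\alpha,\theta,[0,\tilde r]}(y,z)\Big),
\end{align*}
with $\lambda_0>0$ depending only on $\alpha,\hat\alpha,\sigma_1,\sigma_2$; choosing $\tilde r$ small (depending on $\mathcal{M}$) so that $C_{\mathcal{M}}\tilde r^{\lambda_0}\le\frac12$ and absorbing gives $\rho_{2\hat\alpha,2\alpha,\theta,[0,\tilde r]}(y,z)\le 2C_{\mathcal{M}}\mathcal{U}$. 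I would then concatenate: on $[\tilde r,2\tilde r]$ the same argument applies with the value at $\tilde r$ playing the role of $\phi_0,\psi_0$ and with the delayed terms being values of the solutions on earlier sub-intervals (or $\phi,\psi$), whose distances are already bounded by multiples of $\mathcal{U}$; since $\mathcal{M}$ is unchanged along the iteration, the number $\lceil T/\tilde r\rceil$ of sub-intervals is controlled by $\mathcal{M}$, and after that many steps, each multiplying the estimate by an $\mathcal{M}$-dependent factor, one reaches $\rho_{2\hat\alpha,2\alpha,\theta,[0,T]}(y,z)\lesssim_{\mathcal{M}}\mathcal{U}$. The main obstacle will be the rough-convolution term, which forces one to carry simultaneously the difference of the composed integrands --- hence a delayed, $\rho$-distance form of Lemma $\ref{lem005}$ --- and the difference of the driver rough paths $\bar{\mathbf{X}},\bar{\mathbf{Y}}$, while keeping the gain $\tilde r^{\lambda}$ strictly positive for the exponent $\lambda=\min\{\alpha-\hat\alpha,\hat\alpha(\alpha-\sigma_2)/\alpha,3\hat\alpha-2\alpha-\sigma_2\}$ of Lemma $\ref{lem777}$; a secondary difficulty is the bookkeeping at each joint of the concatenation, where one must recheck, via Theorem $\ref{Cor002}$, Lemma $\ref{lem011}$ and the a priori bound Lemma $\ref{lem103}$, that the restricted solutions remain controlled paths of the required regularity with norm dominated by $\mathcal{M}$.
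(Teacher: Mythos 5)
Your proposal follows essentially the same route as the paper: split the difference $y-z$ on a short interval into the propagated initial difference, the drift difference (estimated as in Lemma \ref{lem007} using $\mathbf{H3}$), and the rough-convolution difference (estimated as in Lemma \ref{lem777} after applying a distance version of the composition Lemma \ref{lem005} to the integrands), arrive at the self-improving bound $\rho_{[0,\tilde r]}\lesssim_{\mathcal{M}}\rho_{[0,\tilde r]}\tilde r^{\nu}+\mathcal{U}+\|y_0-z_0\|_\theta$, absorb by choosing $\tilde r$ small depending only on $\mathcal{M}$, and then iterate over finitely many sub-intervals. Your account is, if anything, slightly more explicit than the paper's about the ingredients (the two-driver form of the composition estimate, the role of $\rho_\alpha(\bar{\mathbf{X}},\bar{\mathbf{Y}})$, and the bookkeeping at concatenation joints), but it is the same argument.
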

		\begin{proof}
			In order to complete the proof, we need to employ lots of similar arguments given in previous lemmas. Here, we only provide some key steps for simplicity.
			
			We  consider $\rho_{2\hat{\alpha},2\alpha, \theta, [0,\tilde{r}]}(y,z)$ on the interval $[0,\tilde{r} ]$ with $\tilde{r}\in(0,\hat{r}]$. Set $\nu:=\min\{1-\sigma_{1}, 1-2\hat{\alpha}, \alpha-\hat{\alpha},\frac{\hat{\alpha}(\alpha-\sigma_{2})}{\alpha}, 3\hat{\alpha}-2\alpha-\sigma\}$.
			
			Similar to the proof of Lemma \ref{lem007}, it is easy to obtain 
			\begin{align*}
				&~~~\rho_{2\hat{\alpha},2\alpha, \theta, [0,\tilde{r}]}(\int^{\cdot}_{0}S_{\cdot,u}F(y_{u},\phi_{u-r})\textup{d}u,\int^{\cdot}_{0}S_{\cdot,u}F(z_{u},\psi_{u-r})\textup{d}u)\\
				&\lesssim_{\mathcal{M}} \rho_{2\hat{\alpha},2\alpha, \theta, [0,\tilde{r}]}(y,z)   \tilde{r}^{\nu}+\mathcal{U}.
			\end{align*}
			And, analogous arguments in Lemma \ref{lem777} yield 
			\begin{align*}
				&~~~\rho_{2\hat{\alpha},2\alpha,  \theta, [0,\tilde{r}]}(\int^{\cdot}_{0}S_{\cdot,s}G(y_{s},\phi_{s-r})\cdot\textup{d}\bar{\mathbf{X}}_{s},\int^{\cdot}_{0}S_{\cdot,s}G(z_{s},\psi_{s-r})\cdot\textup{d}\bar{\mathbf{Y}}_{s})\\
				&\lesssim_{\mathcal{M}} \rho_{2\hat{\alpha},2\alpha, \theta, [0,\tilde{r}]}(y,z)   \tilde{r}^{\nu}+\mathcal{U}.
			\end{align*}

		Consequently, we deduce 
		\begin{align*}
			\rho_{2\hat{\alpha},2\alpha, \theta, [0,\tilde{r}]}(y,z) \lesssim_{\mathcal{M}}\rho_{2\hat{\alpha},2\alpha, \theta, [0,\tilde{r}]}(y,z) \tilde{r}^{\nu}+\mathcal{U}+ \|y_{0}-z_{0}\|_{\theta}.
		\end{align*}
		Then, choosing small enough $\tilde{r}$, we derive on $[0,\tilde{r}]$:
		\begin{align*}
			\rho_{2\hat{\alpha},2\alpha, \theta, [0,\tilde{r}]}(y,z) \lesssim_{\mathcal{M}}\mathcal{U}+\|y_{0}-z_{0}\|_{\theta}.
		\end{align*}
		By iteration, the desired result can be obtained.
	\end{proof}
	
	\section{Convergence with respect to the delay}
	In this section, we aim to investigate whether the solution of a delay equation can converge to the solution of a corresponding equation without delay as the time delay approaches zero.
	
	We introduce some notations and assumptions which are used in the section.
	
	$\hat{\mathbf{H}} \mathbf{1}$:  Fix $T>0$, $r_{0}>0$, $\alpha\in(\frac{1}{3},\frac{1}{2}]$, $ \sigma_{2}\in (\frac{\alpha}{2}, \alpha)$ and $\tilde{\alpha}\in (\sigma, \alpha)$. 
	Fix   $\bar{\alpha}\in (\frac{4\alpha}{5},\alpha )$  satisfying $ \sigma_{2}+2\bar{\alpha}-2\alpha\geq 0$ and  $3\bar{\alpha}-2\alpha-\sigma_{2}\geq 0$.	 For any $r\in[0,r_{0}]$, Let $\mathbf{X}=(X, \mathbb{X})\in\mathcal{C}^{\alpha}([0,T], \mathbb{R}^d)$, $\bar{\mathbf{X}}=(X, \mathbb{X},\mathbb{X}(-r))\in\mathcal{C}^{\alpha}([0,T], \mathbb{R}^d)$. We assume
	\begin{align}\label{eq707}
		\sup_{t,s\in[r,T]}\frac{\|\mathbb{X}_{t,s}-\mathbb{X}(-r)_{t,s}\|_{\mathbb{R}^{d}}}
		{(t-s)^{2\bar{\alpha}}}\leq h(r),
	\end{align}
	where $h(r)=o(r)$.

	$\hat{\mathbf{H}}\mathbf{2}$:  $(\phi,\phi')$ is a controlled path based on $\mathbf{X}$ belonging to $\mathcal{D}^{2\tilde{\alpha}}_{\mathbf{X},\theta}([-r_{0},0])\cap \mathcal{D}^{2\alpha}_{\mathbf{X},\theta}([-r_{0},0])$ with $\theta\in\mathbb{R}$.
	
	$\hat{\mathbf{H}} \mathbf{3}$:  Nonlinear operator $\mathcal{F}\in  \text{Lip}_{\theta,-\sigma_{1}}(\mathcal{B}, \mathcal{B})$ with $\sigma_{1}\in(0,1)$.
	
	$\hat{\mathbf{H}} \mathbf{4}$:  Nonlinear operator $\mathcal{G}=(G_{1},\cdots,G_{d})$, such that $\mathcal{G}_{i}\in  \mathcal{C}^{3}_{\theta-2\alpha,-\sigma_{2}}(\mathcal{B}, \mathcal{B})$  for $i=1,\cdots,d$. Moreover, we assume that for $1\leq i,j\leq d$, fixed $x_{1},x_{2},y_{1},y_{2}\in \mathcal{B}_{\theta-\alpha}$,   it holds
	\begin{align}\label{eq223}
		\|D\mathcal{G}_{i}(x_{1})\circ\mathcal{G}_{j}(y_{1})-D\mathcal{G}_{i}(x_{2})\circ\mathcal{G}_{j}(y_{2})\|_{ \theta-2\alpha-\sigma_{2}    }\lesssim \|x_{1}-x_{2}\|_{\theta-\alpha}+\|y_{1}-y_{2}\|_{\theta-\alpha}.
	\end{align}
	
	\begin{Remark} ~~
		
		(i)  In $\hat{\mathbf{H}} 1$,   $\bar{\alpha}$ is chosen for the convenience of subsequent estimations.
		
		(ii)	  
		According to $\hat{\mathbf{H}} 1$,	we note that $\rho_{\alpha, [0,T]}(\bar{\mathbf{X}})$ is uniformly bounded  with respect to $r\in[0,r_{0}]$.  Moreover, we will state Le\'vy area generated by Brownian motion can satisfies $(\ref{eq707})$ later.
		
		(iii) The condition  $(\ref{eq223})$ holds if $\mathcal{G}$ is either linear or uniformly bounded.
	\end{Remark}

	
	In this section,  we consider
	\begin{align}
		y_{t}&=[A y_{t}+\mathcal{F}(y_{t})]\textup{d}t+\mathcal{G}(y_{t-r})\cdot\textup{d}\bar{\mathbf{X}}_{t},\label{eq701}\\
		y_{t}&=\phi_{t}, ~~ t\in[-r,0],\label{eq702}
	\end{align}
	and 
	\begin{align}
		z_{t}&=[A z_{t}+\mathcal{F}(z_{t})]\textup{d}t+\mathcal{G}(z_{t})\cdot\textup{d}\mathbf{X}_{t},\label{eq703}\\
		z_{0}&=\phi_{0}.\label{eq705}
	\end{align}
	Comparing two systems,  we  observe that (\ref{eq701})-(\ref{eq702}) can be regarded as the (\ref{eq703})-(\ref{eq705}) perturbed by delay.  From Theorem \ref{the002}, it follows that 
	there exists the mild solution $(y,y') \in\mathcal{D}^{2\alpha}_{\mathbf{X},\theta}([0,T]) \cap \mathcal{D}^{2\tilde{\alpha}}_{\mathbf{X},\theta}([0,T]) $  of (\ref{eq701})-(\ref{eq702}), and the mild solution $(z,z')\in\mathcal{D}^{2\alpha}_{\mathbf{X},\theta}([0,T])$ of (\ref{eq703})-(\ref{eq705}).
	Our aim is to estimate $\rho_{2\bar{\alpha},2\alpha, \theta-\alpha,[0,T]}(y,z)$ (i.e., the distance   between  $(y,y')$  and
	$(z,z')$).

\begin{Remark} 
In the process of estimating the distance, we need to repeatedly utilize the H\"{o}lder continuous property of $y$ (i.e., $\|\delta y\|_{\alpha,\theta-\alpha}<\infty$). Therefore,  the distance is considered with less regularity of spatial due to this technical reason.
\end{Remark}

For simplicity, set
\begin{align*}
\hat{\mathcal{M}}:= \|y,y'\|_{\mathbf{X},2\alpha, \theta,[0,T]}+\|\phi,\phi'\|_{\mathbf{X},2\alpha, \theta,[-r_{0},0]}+\|z,z'\|_{\mathbf{X},2\alpha, \theta,[0,T]}
+\rho_{\alpha, [0,T]}(\bar{\mathbf{X}}).
\end{align*}

We are going to take account into the distance on $[0,r]$ and $[r,T]$, respectively.
Before studying the distance, an important task  is to  emphasize  $\|y,y'\|_{\mathbf{X},2\alpha, \theta,[0,T]}$ independent of $r$.

\begin{Lemma}
Let $(y,y')\in\mathcal{D}^{2\alpha}_{\mathbf{X},\theta}([0,T])$ be the mild solution of $(\ref{eq701})$-$(\ref{eq702})$. Then, we have $\|y,y'\|_{\mathbf{X},2\alpha, \theta,[0,T]}$ is independent of $r$.
\end{Lemma}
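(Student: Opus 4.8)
The plan is to run the now-familiar local-then-global bootstrap, but carefully tracking the dependence on the delay $r$ at every stage so that the final constant comes out uniform. First I would work on an interval $[0,\tilde r]$ with $\tilde r\in(0,\hat r]$ and recall that on this interval the diffusion term enters through $\mathcal G(y_{t-r})$, whose argument $t-r\in[-r,0]$ lives entirely in the initial segment where $y=\phi$. Hence on $[0,\tilde r]$ the equation reads $y_t=S_{t,0}y_0+\int_0^tS_{t,s}\mathcal F(y_s)\,\textup ds+\int_0^tS_{t,s}\mathcal G(\phi_{s-r})\cdot\textup d\bar{\mathbf X}_s$. The only place $r$ appears is in the composition $\mathcal G(\phi_{\cdot-r})$ and in the choice of the delayed rough path $\bar{\mathbf X}$; by $\hat{\mathbf H}1$ the quantity $\rho_{\alpha,[0,T]}(\bar{\mathbf X})$ is bounded uniformly in $r\in[0,r_0]$, and by $\hat{\mathbf H}2$ the controlled-path norm $\|\phi,\phi'\|_{\mathbf X,2\alpha,\theta,[-r_0,0]}$ controls $\|\phi_{\cdot-r},\phi'_{\cdot-r}\|_{\mathbf X,2\alpha,\theta}$ on any shifted subinterval, again uniformly in $r$. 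Feeding these into Lemma \ref{lem101} (with $z=\phi$) and Lemma \ref{lem007}, the a priori bound \eqref{eq509} of Lemma \ref{lem102} holds with a constant $C_6$ that does not depend on $r$.

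Next I would invoke Lemma \ref{lem103}: its conclusion $\|y\|_{\infty,\theta,[0,\tilde r]}\le C_7\,u\,e^{C_8\tilde r}$ with $u=1\vee\|y_0\|_\theta$ follows by the same concatenation argument, and since $C_7,C_8$ are built only out of $C_6$, $F$, $G$, $\phi$ and $\bar{\mathbf X}$ — all $r$-uniform by the previous paragraph — we get an $L^\infty$ bound on $y$ on $[0,T]$ that is independent of $r$. Plugging this back into the a priori estimate \eqref{eq509} (iterated over a fixed number of subintervals of $[0,T]$, the number depending only on $C_6$ and hence not on $r$) upgrades the uniform $L^\infty$ bound to a uniform bound on the full controlled-path norm $\|y,y'\|_{\mathbf X,2\alpha,\theta,[0,\tilde r]}$, and then by concatenation to $\|y,y'\|_{\mathbf X,2\alpha,\theta,[0,T]}$.

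The point to emphasize — and the one that needs a word of care — is the bookkeeping in the concatenation step: when we pass from $[0,r/M]$ to $[r/M,2r/M]$ and so on (as in Theorem \ref{the002}), the ``initial datum'' for each sub-problem is the restriction of $y$ (or of $\phi$) and its controlled-path norm must be re-estimated, but the relevant driving-path norm $\|\phi_{\cdot-r},\phi'_{\cdot-r}\|_{\mathbf X,2\alpha,\theta}$ stays fixed (it is the same $\phi$ throughout $[0,r]$), so $C_6$ genuinely does not change across iterations, and the number of iterations needed to cover $[0,T]$ is controlled by $T$ and $C_6$ alone. This is the only place where one could accidentally smuggle in an $r$-dependence, and the main obstacle in writing the proof is simply making this uniformity explicit rather than technically hard. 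I would therefore conclude: by $\hat{\mathbf H}1$–$\hat{\mathbf H}4$ every constant entering Lemmas \ref{lem101}, \ref{lem102} and \ref{lem103} is independent of $r\in[0,r_0]$, hence so is the resulting bound on $\|y,y'\|_{\mathbf X,2\alpha,\theta,[0,T]}$, which proves the claim. $\square$
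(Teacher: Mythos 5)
Your plan --- establish $r$-uniformity of the constants in the a priori estimate of Lemma \ref{lem102} and propagate it by concatenation --- is the same high-level route the paper takes, rederiving the estimate in the two forms \eqref{eq219} and \eqref{eq220} with an $r$-uniform constant $\hat{C}_{6}$ and then iterating. But the iteration step that you yourself flag as ``the only place where one could accidentally smuggle in an $r$-dependence'' is precisely where your argument has a gap. The assertion that ``$C_6$ genuinely does not change across iterations'' because $\|\phi_{\cdot-r},\phi'_{\cdot-r}\|$ stays fixed is only valid on $[0,r]$: for $t>r$ the delayed argument $y_{t-r}$ is the solution itself, not $\phi_{t-r}$, so the a priori estimate on an interval $[s,s+\tilde{r}]$ with $s\geq r$ must involve $\|y,y'\|_{\mathbf{X},2\alpha,\theta,[s-r,s]}$ --- this is exactly \eqref{eq220} --- and that quantity accumulates across the iteration. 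Your argument neither acknowledges nor controls this accumulation.

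Moreover, the claim that ``the number of iterations needed to cover $[0,T]$ is controlled by $T$ and $C_6$ alone'' does not follow from the scheme you invoke. If one concatenates with step size tied to $r$, as in Theorems \ref{the002} and \ref{Cor002} and as your passage from $[0,r/M]$ to $[r/M,2r/M]$ suggests, the number of steps to reach $T$ scales like $T/r$ and blows up as $r\to 0$; the constant produced by $T/r$ applications of the local bound is then not $r$-uniform. The paper avoids this by choosing a fixed step $T/M$ with $M$ depending only on $\hat{C}_{6}$, $T$, $\hat{\lambda}_{3}$ and $r_{0}$, and then treats separately the regimes $r\geq T/M$ (applying \eqref{eq219} on each of the at most $r_{0}M/T$ subintervals inside $[0,r]$, then \eqref{eq220} on the remaining ones) and $r<T/M$ (combining \eqref{eq219} and \eqref{eq220} within a single subinterval). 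Relatedly, Lemma \ref{lem103} is stated only on $[0,\tilde{r}]$ with $\tilde{r}\leq\hat{r}=r\wedge 1$, so it cannot directly deliver the $r$-uniform bound on $\|y\|_{\infty,\theta,[0,T]}$ that you invoke. Filling in these pieces would make the argument coincide with the paper's proof.
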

\begin{proof}
Similar to Lemma \ref{lem102},  we know for any $\tilde{r}\in[0, \hat{r}]$:
\begin{align}
	\|y,y'\|_{\mathbf{X},2\alpha, \theta, [0,\tilde{r}]  }&\leq \hat{C}_{6} (1+\|\phi,\phi'\|^{2}_{\mathbf{X},2\alpha,\theta, [-r_{0},0]}+
	\tilde{r}^{\hat{\lambda}_{3}}\|y,y'\|_{\mathbf{X},2\alpha, \theta, [0,\tilde{r}] }),\label{eq219}\\
	\|y,y'\|_{\mathbf{X},2\alpha, \theta, [r,r+\tilde{r}]  }&\leq \hat{C}_{6} (1+
	\|y,y'\|_{\mathbf{X},2\alpha, \theta, [-r,r] }+
	\tilde{r}^{\hat{\lambda}_{3}}\|y,y'\|_{\mathbf{X},2\alpha, \theta, [r,r+\tilde{r}] }),\label{eq220}
\end{align}
where $\hat{\lambda}_{3}=(1-2\alpha)\wedge(1-\sigma_{1})\wedge(\alpha-\sigma_{2})$, and 
$\hat{C}_{6}$ depends on  $ \bar{\mathbf{X}}$, $\mathcal{F}$, $\mathcal{G}$. 
Choose large enough $M$ such that $\hat{C}_{6} (\frac{T}{M})^{\hat{\lambda}_{3}}<\frac{1}{2}$ and $\frac{T}{M}\leq 1\wedge r_{0}$. 

On the one hand,	if $\frac{kT}{M}=r$ with positive integer $k$, then (\ref{eq219})
yields that			\begin{align*}
	\|y,y'\|_{\mathbf{X},2\alpha, \theta, [\frac{(\hat{k}-1)T}{M},\frac{\hat{k}T}{M}]  }\leq  4\hat{C}_{6}l,~ 1\leq \hat{k}\leq k,
\end{align*}
where $l:=1\vee  \|\phi,\phi'\|^{2}_{\mathbf{X},2\alpha,\theta, [-r_{0},0]}$. 
Collecting each interval, it follows 
\begin{align*}
	\|y,y'\|_{\mathbf{X},2\alpha, \theta, [0,r]  }\leq 4k\hat{C}_{6}l.
\end{align*}
By iteration, we can obtain
\begin{align*}
	\|y,y'\|_{\mathbf{X},2\alpha, \theta, [0,T]  }\leq C(M,k,l, \hat{C}_{6}),
\end{align*}
where $M$ and $k$ only depend on $\hat{C}_{6}$, $T$, $\hat{\lambda}_{3}$ and $r_{0}$

On the other hand,	if $r<\frac{T}{M}\leq \min\{1, r_{0}\}$, then
we  also  derive 
\begin{align*}
	\|y,y'\|_{\mathbf{X},2\alpha, \theta, [0,r]  }\leq  4\hat{C}_{6}l.
\end{align*}
Furthermore, it holds
\begin{align}\label{eq221}
	\|y,y'\|_{\mathbf{X},2\alpha, \theta, [-r,r]  }\leq  8\hat{C}_{6}l.
\end{align}
Thus, (\ref{eq220}) and (\ref{eq221}) imply  
\begin{align*}
	\|y,y'\|_{\mathbf{X},2\alpha, \theta, [r,\frac{T}{M}]}\leq  32\hat{C}_{6}l,
\end{align*}
from which we can deduce 
\begin{align*}
	\|y,y'\|_{\mathbf{X},2\alpha, \theta, [-r,\frac{T}{M}]}\leq  64\hat{C}_{6}l.
\end{align*}
We also notice 
\begin{align*}
	\|y,y'\|_{\mathbf{X},2\alpha, \theta, [\frac{T}{M},\frac{2T}{M}]  }&\leq \hat{C}_{6} (1+
	\|y,y'\|_{\mathbf{X},2\alpha, \theta, [-r,\frac{T}{M}] }+
	\tilde{r}^{\hat{\lambda}_{3}}\|y,y'\|_{\mathbf{X},2\alpha, \theta, [\frac{T}{M},\frac{2T}{M}]}).
\end{align*}
Then, it holds 
\begin{align*}
	\|y,y'\|_{\mathbf{X},2\alpha, \theta, [\frac{T}{M},\frac{2T}{M}]}\leq  C(\hat{C}_{6},l).
\end{align*}
By iterating this process and piecing together the estimations on each interval, the result  is also obtained as desired.
\end{proof}

The distance $\rho_{2\bar{\alpha},2\alpha, \theta-\alpha, [0,r]}(y,z)$ for $[0,r]$ follows easily.
\begin{Lemma}\label{lem707}
Let $(y,y') \in\mathcal{D}^{2\alpha}_{\mathbf{X},\theta}([0,T])$  and $(z,z')\in\mathcal{D}^{2\alpha}_{\mathbf{X},\theta}([0,T])$ be the mild solution of  $(\ref{eq701})$-$(\ref{eq702})$ and $(\ref{eq703})$-$(\ref{eq705})$, respectively.  Then, there exists some constant $c>0$ such that
\begin{align*}
\rho_{2\bar{\alpha},2\alpha, \theta-\alpha,[0,r]}(y,z)\lesssim_{\hat{\mathcal{M}}} r^{c}.
\end{align*}
\end{Lemma}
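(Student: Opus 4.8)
The plan is to estimate the distance directly on the short interval $[0,r]$, component by component; because that interval has length only $r$, no iteration or fixed‑point argument is needed here. The starting observation is that for $t\in[0,r]$ one has $t-r\in[-r,0]$, so $y_{t-r}=\phi_{t-r}$, and hence, by Theorem~\ref{the002} specialised to the present situation in which $\mathcal{G}$ depends only on the delayed argument, the controlled‑path data of $y$ on $[0,r]$ is completely explicit: $y'_{t}=\mathcal{G}(\phi_{t-r})$ and $R^{y}_{t,s}=\delta y_{t,s}-\mathcal{G}(\phi_{s-r})\cdot\delta X_{t,s}$, and likewise $z'_{t}=\mathcal{G}(z_{t})$, $R^{z}_{t,s}=\delta z_{t,s}-\mathcal{G}(z_{s})\cdot\delta X_{t,s}$. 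Both $(y,y')$ and $(z,z')$ are genuine (non‑delayed) controlled paths based on $\mathbf{X}$, so their delayed Gubinelli derivatives vanish, and $\rho_{2\bar{\alpha},2\alpha,\theta-\alpha,[0,r]}(y,z)$ reduces to the five terms $\|y-z\|_{\infty,\theta-\alpha}$, $\|y'-z'\|_{\infty,\theta-2\alpha}$, $\|\delta(y'-z')\|_{\bar{\alpha},\theta-3\alpha}$, $\|R^{y}-R^{z}\|_{\bar{\alpha},\theta-2\alpha}$ and $\|R^{y}-R^{z}\|_{2\bar{\alpha},\theta-3\alpha}$, all over $[0,r]$.

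For the two supremum terms I would use the common initial condition $y_{0}=z_{0}=\phi_{0}$. Writing $y_{t}-\phi_{0}=\delta y_{t,0}$ and invoking the preceding lemma (which makes $\|y,y'\|_{\mathbf{X},2\alpha,\theta,[0,T]}$, hence $\|\delta y\|_{\alpha,\theta-\alpha,[0,T]}$ through the estimate behind~\eqref{eq103}, bounded uniformly in $r$) gives $\|y_{t}-\phi_{0}\|_{\theta-\alpha}\lesssim_{\hat{\mathcal{M}}}t^{\alpha}$; the same holds for $z$, since the non‑delay system~\eqref{eq703}--\eqref{eq705} does not involve $r$ at all, so $\|y-z\|_{\infty,\theta-\alpha,[0,r]}\lesssim_{\hat{\mathcal{M}}}r^{\alpha}$. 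For $y'_{t}-z'_{t}=\mathcal{G}(\phi_{t-r})-\mathcal{G}(z_{t})$, both $\phi_{t-r}$ and $z_{t}$ are within $O(r^{\alpha})$ of $\phi_{0}$ in $\mathcal{B}_{\theta-\alpha}$ (for $z$ by the estimate just proved, for $\phi_{t-r}$ by $\alpha$‑Hölder continuity of the controlled path $\phi$ on $[-r_{0},0]$), and since $\sigma_{2}<\alpha$ one has the embedding $\mathcal{B}_{\theta-\alpha}\hookrightarrow\mathcal{B}_{\theta-2\alpha+\sigma_{2}}$, so the boundedness of $D\mathcal{G}$ from $\hat{\mathbf{H}}\mathbf{4}$ yields $\|y'-z'\|_{\infty,\theta-2\alpha,[0,r]}\lesssim_{\hat{\mathcal{M}}}r^{\alpha}$.

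The three Hölder‑seminorm terms I would handle by a single device: downgrading the Hölder exponent. Since $\bar{\alpha}<\alpha$, for $0\le t-s\le r$ one has $(t-s)^{\alpha}\le r^{\alpha-\bar{\alpha}}(t-s)^{\bar{\alpha}}$ and $(t-s)^{2\alpha}\le r^{2\alpha-2\bar{\alpha}}(t-s)^{2\bar{\alpha}}$, while the continuous embedding $\mathcal{B}_{\theta-2\alpha}\hookrightarrow\mathcal{B}_{\theta-3\alpha}$ absorbs the extra spatial loss. Since $\|\delta(y'-z')\|_{\alpha,\theta-2\alpha,[0,r]}$ and $\|R^{y}-R^{z}\|_{2\alpha,\theta-2\alpha,[0,r]}$ are bounded by $\hat{\mathcal{M}}$ (being dominated by $\|y,y'\|_{\mathbf{X},2\alpha,\theta}+\|z,z'\|_{\mathbf{X},2\alpha,\theta}$, uniform in $r$ by the preceding lemma), this gives $\|\delta(y'-z')\|_{\bar{\alpha},\theta-3\alpha,[0,r]}\lesssim_{\hat{\mathcal{M}}}r^{\alpha-\bar{\alpha}}$, $\|R^{y}-R^{z}\|_{\bar{\alpha},\theta-2\alpha,[0,r]}\lesssim_{\hat{\mathcal{M}}}r^{2\alpha-\bar{\alpha}}$ and $\|R^{y}-R^{z}\|_{2\bar{\alpha},\theta-3\alpha,[0,r]}\lesssim_{\hat{\mathcal{M}}}r^{2\alpha-2\bar{\alpha}}$. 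Adding the five bounds proves the assertion with, say, $c:=\alpha-\bar{\alpha}>0$.

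The only genuinely delicate point in carrying this out is the bookkeeping of the spatial regularity indices: at each step one must check that the scale on which a quantity is a priori controlled (for instance $R^{y}\in\mathcal{C}_{2}^{\alpha}(\cdot,\mathcal{B}_{\theta-\alpha})\cap\mathcal{C}_{2}^{2\alpha}(\cdot,\mathcal{B}_{\theta-2\alpha})$, $\mathcal{G}_{i}:\mathcal{B}_{\vartheta}\to\mathcal{B}_{\vartheta-\sigma_{2}}$, $\phi'\in(\mathcal{C}^{0,\alpha}_{\theta-\alpha})^{d}$) lies above the scales $\mathcal{B}_{\theta-\alpha}$, $\mathcal{B}_{\theta-2\alpha}$, $\mathcal{B}_{\theta-3\alpha}$ that occur in $\rho_{2\bar{\alpha},2\alpha,\theta-\alpha}$; this is precisely what the choices $\sigma_{2}\in(\frac{\alpha}{2},\alpha)$ and $\bar{\alpha}\in(\frac{4\alpha}{5},\alpha)$ with $\sigma_{2}+2\bar{\alpha}-2\alpha\ge0$ and $3\bar{\alpha}-2\alpha-\sigma_{2}\ge0$ in $\hat{\mathbf{H}}\mathbf{1}$ are designed to ensure. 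Note that the rough‑path machinery proper (Lemmas~\ref{lem003} and~\ref{lem006}) enters here only indirectly, through Theorem~\ref{the002} for the explicit form of $y'$ and $R^{y}$ and through the uniform a priori bound of the preceding lemma; no fresh rough‑integral estimate is required on $[0,r]$.
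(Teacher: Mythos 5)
Your proof is correct and reaches the same conclusion, but via a route that is genuinely simpler than the paper's for the two supremum terms of the distance. The paper bounds $\|y-z\|_{\infty,\theta-\alpha,[0,r]}$ by expanding the mild-solution formula and invoking the rough-convolution estimate of Lemma~\ref{lem006} together with Lemma~\ref{lem001} (composition), and it bounds $\|y'-z'\|_{\infty,\theta-2\alpha,[0,r]}$ by writing $\mathcal{G}(\phi_{t-r})-\mathcal{G}(z_{t})=\big[\mathcal{G}(\phi_{t-r})-\mathcal{G}(\phi_{0})\big]+\big[\mathcal{G}(z_{0})-\mathcal{G}(z_{t})\big]$ and then using the interpolation inequality~\eqref{eq053} to control $\|\phi_{t-r}-\phi_{0}\|_{\theta-\alpha+\sigma_{2}}$ and $\|z_{t}-z_{0}\|_{\theta-\alpha+\sigma_{2}}$, yielding the power $r^{\alpha-\sigma_{2}}$. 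You instead exploit the shared initial condition $y_{0}=z_{0}=\phi_{0}$ and the uniform-in-$r$ $\alpha$-H\"older continuity of $y$, $z$ and $\phi$ at scale $\theta-\alpha$ (guaranteed by the preceding a priori lemma via~\eqref{eq100}), getting $\|y-z\|_{\infty,\theta-\alpha,[0,r]}\lesssim r^{\alpha}$ directly, and for $y'-z'$ you apply the Lipschitz property of $\mathcal{G}\colon\mathcal{B}_{\theta-2\alpha+\sigma_{2}}\to\mathcal{B}_{\theta-2\alpha}$ to the bound $\|\phi_{t-r}-z_{t}\|_{\theta-\alpha}\lesssim r^{\alpha}$ together with the embedding $\mathcal{B}_{\theta-\alpha}\hookrightarrow\mathcal{B}_{\theta-2\alpha+\sigma_{2}}$, giving the slightly better power $r^{\alpha}$ without any interpolation. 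For the three H\"older-seminorm terms the two arguments are the same: bound $\|\delta(y'-z')\|_{\alpha,\theta-2\alpha}$, $\|R^{y}-R^{z}\|_{\alpha,\theta-\alpha}$ and $\|R^{y}-R^{z}\|_{2\alpha,\theta-2\alpha}$ uniformly by $\hat{\mathcal{M}}$ via the triangle inequality, and pull out a factor $r^{i(\alpha-\bar{\alpha})}$ by downgrading the exponent, absorbing the spatial loss through the embeddings. Your version buys cleaner bookkeeping on $[0,r]$ (no rough-integral estimate is needed there), whereas the paper's version is structured the way it is because the same integral-representation expansion is reused on the subsequent intervals $[s,t]\subset[r,T]$, where the common-initial-condition shortcut is no longer available.
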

\begin{proof}
We estimate each observation in the distance as follows.

From $(y,y') \in\mathcal{D}^{2\alpha}_{\mathbf{X},\theta}([0,T])$  and $(z,z')\in\mathcal{D}^{2\alpha}_{\mathbf{X},\theta}([0,T])$, it holds 
\begin{align*}
&~~~~\|y-z\|_{\infty,\theta-\alpha,[0,r]}\\
&\lesssim \|\int^{\cdot}_{0}S_{\cdot,s}[\mathcal{F}(y_{s})-\mathcal{F}(z_{s})]\textup{d}{s}\|_{\infty,\theta-\alpha,[0,r]}+\|\int^{\cdot}_{0}S_{\cdot,s}[\mathcal{G}(\phi_{s-r})-\mathcal{G}(z_{s})]\cdot\textup{d}\bar{\mathbf{X}}_{s}\|_{\infty,\theta-\alpha,[0,r]}\\
&\lesssim r^{1-\sigma_{1}}(1+\|y\|_{\infty,\theta,[0,r]}+\|z\|_{\infty,\theta,[0,r]})+r^{\alpha-\sigma_{2}}\|\mathcal{G}(\phi_{\cdot-r})-\mathcal{G}(z)\|_{\bar{\mathbf{X}},2\alpha, \theta-\alpha-\sigma_{2},[0,r]}\\
&~~~+\|\mathcal{G}(\phi_{-r})-\mathcal{G}(z_{0})\|_{\theta-\alpha-\sigma_{2}}\\
&\lesssim r^{1-\sigma_{1}}+r^{ \alpha-\sigma_{2}}+r^{\alpha},
\end{align*}
where we use Lemma \ref{lem006} in the second inequality, and Lemma \ref{lem001} in the last one.

Note $y'_{t}=\mathcal{G}(\phi_{t-r})$ and $z'_{t}=\mathcal{G}(z_{t})$. Applying the assumptions on $\mathcal{G}$,  we derive for $t\in[0,r]$,
\begin{align*}
&~~~~\|\mathcal{G}(\phi_{t-r})-\mathcal{G}(z_{t})\|_{\theta-2\alpha}\\
&=\|\mathcal{G}(\phi_{t-r})-\mathcal{G}(\phi_{0})+\mathcal{G}(z_{0})  -\mathcal{G}(z_{t})\|_{\theta-2\alpha}\\
&\lesssim_{\hat{\mathcal{M}}} \|\phi_{t-r}-\phi_{0}\|_{ \theta-\alpha+\sigma_{2}}+\|z_{0}-z_{t}\|_{ \theta-\alpha+\sigma_{2}}\\
&\lesssim_{\hat{\mathcal{M}}} (\|\phi_{t-r}-\phi_{0}\|_{\theta}+\frac{\|\phi_{t-r}-\phi_{0}\|_{\theta-\alpha}}{(t-r)^{\alpha}})(t-r)^{\alpha-\sigma_{2}}+  (\|z_{t}-z_{0}\|_{\theta}+\frac{\|z_{t}-z_{0}\|_{\theta-\alpha}}{t^{\alpha}})t^{\alpha-\sigma_{2}}\\
&\lesssim_{\hat{\mathcal{M}}} r^{\alpha-\sigma_{2}},
\end{align*}  
where we use interpolation inequality (\ref{eq053}) in the third line.

And, it not hard to deduce 
\begin{align*}
&~~~~\|\mathcal{G}(\phi_{t-r})- \mathcal{G}(\phi_{s-r})-
\mathcal{G}(z_{t})+\mathcal{G}(z_{s})
\|_{\theta-3\alpha}\\
&\lesssim \|\phi_{t-r}-\phi_{s-r}\|_{\theta-2\alpha+\sigma_{2}}+\|z_{t}-z_{s}\|_{\theta-2\alpha+\sigma_{2}}.
\end{align*}  
which implies $ \|\delta(y'-z')\|_{ \bar{\alpha},\theta-3\alpha,[0,r] } \lesssim_{ \hat{\mathcal{M}}} r^{\alpha-\bar{\alpha}}$. 


Easily, for $i=1,2$, it follows
\begin{align*}
\|R^{y}-R^{z}\|_{i\bar{\alpha}, \theta-\alpha-i\alpha,[0,r]}
\lesssim (\|R^{y}\|_{i\alpha, \theta-i\alpha,[0,r]}+\|R^{z}\|_{i\alpha, \theta-i\alpha,[0,r]})r^{i\alpha-i\bar{\alpha}}.
\end{align*}

Collecting the above estimates, we obtain suitable $c$ as desired.
\end{proof}

The distance on $[r,T]$ will be  present  in the followings.
In order to achieve  our goal, we need to study the relationship between $y_{t}$ and $y_{t-r}$ as preparations.   

\begin{Lemma}\label{lem107}
Let $(y,y') \in\mathcal{D}^{2\alpha}_{\mathbf{X},\theta}([0,T])$    be the mild solution of  $(\ref{eq701})$-$(\ref{eq702})$. 
For $r\leq u<v\leq T $,  there exists some constant $c>0$ such that
\begin{align*}
\sup_{t\in[u,v]}	\|\int^{t}_{u}S_{t,s}[\mathcal{G}(y_{s})-\mathcal{G}(y_{s-r})]\cdot\textup{d}\bar{\mathbf{X}}_{s}\|_{\theta-\alpha}\lesssim_{ \hat{\mathcal{M}},T } r^{c}+h(r).
\end{align*}
\end{Lemma}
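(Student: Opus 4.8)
The plan is to realise $w_\tau:=\mathcal G(y_\tau)-\mathcal G(y_{\tau-r})$ as a delayed controlled path on $[u,v]$ and to control the rough convolution $\zeta_t:=\int_u^tS_{t,\tau}w_\tau\cdot\textup d\bar{\mathbf X}_\tau$ through the sewing estimate of Lemma~\ref{lem003}$(ii)$, the point being that the germ of $\zeta$ has $\mathcal J^{\bar\alpha}_{\theta-\alpha-\sigma_2}$-norm of size $r^{c}+h(r)$. Since $y'_\tau=\mathcal G(y_{\tau-r})$ for $\tau\ge r$ by Theorem~\ref{the002}, Lemma~\ref{lem001} applied with the $\mathcal C^3$-map $(x_1,x_2)\mapsto\mathcal G(x_1)-\mathcal G(x_2)$ and the inputs $(y,y')$ on $[u,v]$ and $(y,y')$ on $[u-r,v-r]$ (the latter a restriction of the solution, as $u\ge r$) shows that $(w,w',\bar w')$, with $w'_\tau=D\mathcal G(y_\tau)\!\circ\!\mathcal G(y_{\tau-r})$ and $\bar w'_\tau=-D\mathcal G(y_{\tau-r})\!\circ\!\mathcal G(y_{\tau-2r})$, is a delayed controlled path in $\mathcal D^{2\bar\alpha}_{\bar{\mathbf X},\theta-\alpha-\sigma_2}([u,v])$, so $\zeta_t$ is well defined by Lemma~\ref{lem003}.

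Next I record the smallness inputs. A controlled path is H\"older, so $\|\delta y\|_{\alpha,\theta-\alpha}\lesssim_{\hat{\mathcal M}}1$ (cf.~\eqref{eq103}); interpolating~\eqref{eq053} between $\mathcal B_{\theta-\alpha}$ and $\mathcal B_\theta$ gives $\|y_a-y_b\|_{\theta-\delta}\lesssim_{\hat{\mathcal M}}|a-b|^{\delta}$ for $\delta\in[0,\alpha]$. With $\mathcal G\in\mathcal C^1$ and~\eqref{eq223} one deduces $\|w_\tau\|_{\theta-\alpha-\sigma_2}\lesssim_{\hat{\mathcal M}}r^{\alpha}$, $\|w'_\tau+\bar w'_\tau\|_{\theta-2\alpha-\sigma_2}\lesssim_{\hat{\mathcal M}}r^{\alpha}$ and $\|\delta(w'+\bar w')_{t,s}\|\lesssim_{\hat{\mathcal M}}\min\{r^{\alpha},(t-s)^{\alpha}\}$ (all using $\tau-2r\ge-r_0$). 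From~$\hat{\mathbf H}1$, $\|(\mathbb X(-r)-\mathbb X)_{t,s}\|\lesssim h(r)(t-s)^{2\bar\alpha}$, and Chen's relations give $\delta(\mathbb X(-r)-\mathbb X)_{t,u,s}=\delta g_{u,s}\otimes\delta X_{t,u}$ with $g_\tau:=X_\tau-X_{\tau-r}$ and $\|\delta g_{u,s}\|\lesssim\min\{r^{\alpha},(u-s)^{\alpha}\}$.

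The heart of the argument is a bound on $\bar R^{w}$. Using $\bar R^{w}_{t,s}=R^{\mathcal G(y)}_{t,s}-R^{\mathcal G(y)}_{t-r,s-r}$ (the remainder of the controlled path $\mathcal G(y)$ is time-shifted) and Taylor-expanding $\mathcal G$ to second order,
\[
\bar R^{w}_{t,s}=\bigl[D\mathcal G(y_s)-D\mathcal G(y_{s-r})\bigr]R^{y}_{t,s}+D\mathcal G(y_{s-r})\bigl[R^{y}_{t,s}-R^{y}_{t-r,s-r}\bigr]+\mathcal E_{t,s},
\]
where the first term is $\lesssim_{\hat{\mathcal M}}r^{\alpha}(t-s)^{2\alpha}$ and the quadratic error $\mathcal E$, after interpolating the $\min\{r^{\alpha},(t-s)^{\alpha}\}$-factors, is $\lesssim_{\hat{\mathcal M}}r^{\alpha-\bar\alpha}(t-s)^{2\bar\alpha}$. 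For the middle term I insert the mild formula for $y$: since $S_{t-r,\tau-r}=S_{t-\tau}=S_{t,\tau}$,
\[
R^{y}_{t,s}=(S_{t,s}-\mathrm{id})y_s+\int_s^tS_{t,\tau}\mathcal F(y_\tau)\,\textup d\tau+(S_{t,s}-\mathrm{id})\mathcal G(y_{s-r})\,\delta X_{t,s}+S_{t,s}D\mathcal G(y_{s-r})\mathcal G(y_{s-2r})\!:\!\mathbb X(-r)_{t,s}+\mathcal R^{\sharp}_{t,s},
\]
$\mathcal R^{\sharp}$ being the sewing remainder of the convolution defining $y$; subtracting the $(\cdot-r)$-shifted identity term by term, every difference is controlled either by a shift of the bounded $\alpha$-H\"older path $y$ (handled by the interpolation above, using $\theta-2\alpha+2\bar\alpha<\theta$ so that $(S_{t,s}-\mathrm{id})(y_s-y_{s-r})$ picks up the exponent $2\bar\alpha$ together with an $r^{2(\alpha-\bar\alpha)}$-gain), or by a product difference $\mathcal G(y_{s-kr})\mathcal G(y_{s-(k+1)r})-\mathcal G(y_{s-(k+1)r})\mathcal G(y_{s-(k+2)r})$ (small by~\eqref{eq223}), or by $\mathbb X(-r)_{t,s}-\mathbb X(-r)_{t-r,s-r}$, which I rewrite via~\eqref{eq707} as $(\mathbb X_{t,s}-\mathbb X_{t-r,s-r})+O(h(r)(t-s)^{2\bar\alpha})$ and treat with the uniform $2\alpha$-H\"older control of $\mathbb X$; the case $s-2r<0$, where the delay term of $y$ is governed by $\phi$, is split off using $\hat{\mathbf H}2$. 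This gives $\|\bar R^{w}_{t,s}\|_{\theta-\alpha-\sigma_2-2\bar\alpha}\lesssim_{\hat{\mathcal M},T}(r^{c}+h(r))(t-s)^{2\bar\alpha}$ for a suitable $c>0$. I expect this shift-stability of the solution's remainder to be the main obstacle, both for the bookkeeping and because it is exactly where~\eqref{eq707} is used.

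Finally the assembly. Put $\phi_{t,s}:=w_s\cdot\delta X_{t,s}+w'_s\!:\!\mathbb X_{t,s}+\bar w'_s\!:\!\mathbb X(-r)_{t,s}$, the germ of $\zeta$. Writing $\phi=f^{1}+f^{2}$ with $f^{1}=w_s\cdot\delta X_{t,s}$ and $f^{2}=(w'_s+\bar w'_s)\!:\!\mathbb X_{t,s}+\bar w'_s\!:\!(\mathbb X(-r)-\mathbb X)_{t,s}$, the estimates above give $\|f^{1}\|_{\bar\alpha,\,\theta-\alpha-\sigma_2}+\|f^{2}\|_{2\bar\alpha,\,\theta-\alpha-\sigma_2-\bar\alpha}\lesssim_{\hat{\mathcal M},T}r^{\alpha}+h(r)$; and by Chen's relations for $\mathbb X$ and $\mathbb X(-r)$ all ``diagonal'' terms of $\delta\phi$ cancel, leaving
\[
\delta\phi_{t,u,s}=-\bar R^{w}_{u,s}\,\delta X_{t,u}-\delta(w'+\bar w')_{u,s}\!:\!\mathbb X_{t,u}-\delta\bar w'_{u,s}\!:\!(\mathbb X(-r)-\mathbb X)_{t,u},
\]
whose first piece lies in $\mathcal C_3^{2\bar\alpha,\bar\alpha}$ by the $\bar R^{w}$-bound, whose third lies in $\mathcal C_3^{\bar\alpha,2\bar\alpha}$ by~\eqref{eq707}, and whose middle piece lies in $\mathcal C_3^{\bar\alpha,2\bar\alpha}$ via $\|\delta(w'+\bar w')_{u,s}\|\le r^{\alpha-\bar\alpha}(u-s)^{\bar\alpha}$ — each with norm $\lesssim_{\hat{\mathcal M},T}r^{c}+h(r)$. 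Hence $\|\phi\|_{\mathcal J^{\bar\alpha}_{\theta-\alpha-\sigma_2}}\lesssim_{\hat{\mathcal M},T}r^{c}+h(r)$. Applying Lemma~\ref{lem003}$(ii)$ with $\beta=\sigma_2+2\bar\alpha$ (which is $<3\bar\alpha$ since $\sigma_2\le 3\bar\alpha-2\alpha<\bar\alpha$ by~$\hat{\mathbf H}1$) and using $\zeta_u=0$,
\[
\|\zeta_t-S_{t,u}\phi_{t,u}\|_{\theta-\alpha}\lesssim\|\phi\|_{\mathcal J^{\bar\alpha}_{\theta-\alpha-\sigma_2}}(t-u)^{\bar\alpha-\sigma_2}\lesssim_{\hat{\mathcal M},T}r^{c}+h(r),
\]
while the smoothing bounds~\eqref{eq050} applied to $\phi_{t,u}=w_u\cdot\delta X_{t,u}+(w'_u+\bar w'_u)\!:\!\mathbb X_{t,u}+\bar w'_u\!:\!(\mathbb X(-r)-\mathbb X)_{t,u}$ give $\|S_{t,u}\phi_{t,u}\|_{\theta-\alpha}\lesssim_{\hat{\mathcal M},T}r^{\alpha}+h(r)$ (here one uses $\alpha>\sigma_2$, $\bar\alpha>\sigma_2$ and $\alpha+\sigma_2,\ \bar\alpha+\sigma_2<1$). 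Combining the two displays and taking the supremum over $t\in[u,v]$ finishes the proof.
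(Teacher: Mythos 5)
Your high-level scaffolding (realise $w_\tau=\mathcal G(y_\tau)-\mathcal G(y_{\tau-r})$ as a delayed controlled path, estimate the $\mathcal J^{\bar\alpha}$-norm of the germ, apply Lemma~\ref{lem003}$(ii)$) matches the paper, and your treatment of the germ itself — the split $w_s\cdot\delta X$, $(w'_s+\bar w'_s):\mathbb X$, $\bar w'_s:(\mathbb X(-r)-\mathbb X)$ with~\eqref{eq223} and~\eqref{eq707} — is essentially the paper's decomposition into $I^{1,1},I^{1,2},I^{1,3}$ and $I^2$. The divergence is in the estimate of the remainder $\bar R^{w}$ (the paper's $\bar R^{\mathbb G}$), and this is where I think your proposal has a genuine gap. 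The paper never touches the mild formula: it estimates $\bar R^{\mathbb G}$ directly by an algebraic decomposition with a case split on $t-s$ versus $r$. For $t-s<r$, the $2\alpha$-H\"older bound of $\bar R^{\mathbb G}$ already gives the $r$-gain via $(t-s)^{2\alpha}=(t-s)^{2\bar\alpha}(t-s)^{2(\alpha-\bar\alpha)}\leq(t-s)^{2\bar\alpha}r^{2(\alpha-\bar\alpha)}$; for $t-s\geq r$, the paper rewrites $\bar R^{\mathbb G}_{t,s}$ using the intermediate points $t-r,\ s$ (available precisely because $r\leq t-s$) into four pieces $\mathbb K^{1},\dots,\mathbb K^{4}$, each carrying an increment $\delta X_{t,t-r}$, $\delta X_{s,s-r}$, $R^y_{t,t-r}$, $R^y_{s,s-r}$ over an $r$-length interval, or a composition difference controlled by~\eqref{eq223}.

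Your route instead expresses $\bar R^{w}_{t,s}$ through $R^y_{t,s}-R^y_{t-r,s-r}$ and inserts the mild formula for $y$. Two terms this generates are not handled. First, the sewing remainder difference $\mathcal R^{\sharp}_{t,s}-\mathcal R^{\sharp}_{t-r,s-r}$ is never mentioned; the only available bound, $\|\mathcal R^{\sharp}_{t,s}\|\lesssim(t-s)^{3\alpha-\beta}$, gives no $r$-smallness once $t-s$ is comparable to $T$, and the sewing map has no obvious shift-stability. Second, you reduce the second-level shift to $\mathbb X_{t,s}-\mathbb X_{t-r,s-r}$ and propose to ``treat [it] with the uniform $2\alpha$-H\"older control of $\mathbb X$''; but $\|\mathbb X_{t,s}\|+\|\mathbb X_{t-r,s-r}\|\lesssim(t-s)^{2\alpha}$ carries no factor of $r$. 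To extract one you would need Chen's relation at $t-r$ (legal only when $t-s\geq r$) to peel off the $r$-length increments $\mathbb X_{t,t-r}$, $\mathbb X_{s,s-r}$, $X_{t,t-r}$, $X_{s,s-r}$, together with the $t-s<r$ case handled by the $(t-s)^{2(\alpha-\bar\alpha)}\leq r^{2(\alpha-\bar\alpha)}$ trick — i.e.\ exactly the paper's case split, which your proposal does not perform. Without that, neither the $\mathcal R^\sharp$-difference nor the $\mathbb X$-shift is $O(r^{c}+h(r))$, so the central $\bar R^{w}$ bound is not established. (Also note~\eqref{eq707} only controls $\mathbb X(-r)_{t-r,s-r}-\mathbb X_{t-r,s-r}$ when $s\geq 2r$, so the regime $r\leq s<2r$ needs separate handling of the kind you only gesture at.)
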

\begin{proof}
Notice $\mathcal{G}(y_{t})-\mathcal{G}(y_{t-r})$ is a delayed controlled path on $[r,T]$ due to Lemma \ref{lem001}.

Set $\mathbb{G}_{t}= \mathcal{G}(y_{t})-\mathcal{G}(y_{t-r})$,  and rewrite $\int^{t}_{s}S_{t,\tau}\mathbb{G}_{\tau}\cdot\textup{d}\bar{\mathbf{X}}_{\tau}=
I^{1}_{t,s}+I^{2}_{t,s}$,
where 
\begin{align*}
I^{1}_{t,s}&=\int^{t}_{s}S(t-u)\mathbb{G}_{s}\cdot\textup{d}\bar{\mathbf{X}}_{s}-S_{t,s}\big\{\mathbb{G}_{s}\cdot\delta X_{t,s}+\mathbb{G}'_{s}:\mathbb{X}_{t,s}+   \bar{\mathbb{G}}'_{s}:\mathbb{X}(-r)_{t,s}\big\},\\
I^{2}_{t,s}&=S_{t,s}\big\{\mathbb{G}_{s}\cdot\delta X_{t,s}+\mathbb{G}'_{s}:\mathbb{X}_{t,s}+\bar{\mathbb{G}}'_{s}:\mathbb{X}(-r)_{t,s}\big\}.
\end{align*}


We start with the estimate of $I^{1}_{t,s}$ by (ii) of Lemma \ref{lem003} . 
Indeed, we need to consider these terms
$\mathbb{G}_{s}\cdot\delta X_{t,s}$, $\mathbb{G}^{\prime}_{s}:\mathbb{X}_{t,s}+   \bar{\mathbb{G}}^{\prime}_{s}:\mathbb{X}(-r)_{t,s}$, 
$\delta X_{t,l}\cdot\bar{R}^{\mathbb{G}}_{l,s}$ as well as $\delta \mathbb{G}^{\prime}_{l,s}:\mathbb{X}_{t,l}+   \delta \bar{\mathbb{G}}^{\prime}_{l,s}:\mathbb{X}(-r)_{t,l}$
as follows.

For the first term, we deduce 
\begin{align*}
&~~~\sup_{s,t\in[u,v]}\frac{\|\mathbb{G}_{s}\cdot\delta X_{t,s}\|_{\theta-\sigma_{2}-\alpha}     }{(t-s)^{\bar{\alpha}}}\\
&\lesssim 	\sup_{ t\in[r,T]}\|  \mathbb{G}_{t}   \|_{\theta-\sigma_{2}-\alpha} T^{\alpha-\bar{\alpha}}\\
&\lesssim _{\hat{\mathcal{M}}}\sup_{ t\in[r,T]}\frac{\|  y_{t}-y_{t-r} \|_{\theta-\alpha}}{r^{\alpha}}r^{\alpha}T^{\alpha-\bar{\alpha}}\\
&\lesssim_{\hat{\mathcal{M}}} \sup_{ t,s\in[0,T]} \frac{\|  y_{t}-y_{s} \|_{\theta-\alpha}}{(t-s)^{\alpha}}r^{\alpha}T^{\alpha-\bar{\alpha}}\\
&\lesssim_{\hat{\mathcal{M}},T }r^{\alpha}.
\end{align*}

For the second term, we note 
\begin{align}
&~~~~\mathbb{G}^{\prime}_{s}:\mathbb{X}_{t,s}+   \bar{\mathbb{G}}^{\prime}_{s}:\mathbb{X}(-r)_{t,s}\label{eq711}\\
&=D\mathcal{G}(y_{s})\circ\mathcal{G}(y_{s-r}):\mathbb{X}_{t,s}-D\mathcal{G}(y_{s-r})\circ\mathcal{G}(y_{s-2r}):\mathbb{X}(-r)_{t,s}\notag\\
&=\sum^{3}_{j=1}I^{1,j}_{t,s},\notag
\end{align}
where 
\begin{align*}
I^{1,1}_{t,s}&=[D\mathcal{G}(y_{s})-D\mathcal{G}(y_{s-r})]\circ \mathcal{G}(y_{s-r}) :\mathbb{X}_{t,s}\\
I^{1,2}_{t,s}&=D\mathcal{G}(y_{s-r})\circ [\mathcal{G}(y_{s-r})-\mathcal{G}(y_{s-2r})] :\mathbb{X}_{t,s},\\
I^{1,3}_{t,s}&=D\mathcal{G}(y_{s-r})\circ \mathcal{G}(y_{s-2r}):(\mathbb{X}_{t,s}-\mathbb{X}(-r)_{t,s}).\\
\end{align*}
Then, we derive 
\begin{align*}
&~~~~\sup_{t,s\in[u,v]}\frac{\|I^{1,1}_{t,s}\|_{\theta-\sigma_{2}-\alpha-\bar{\alpha}}}{(t-s)^{2\bar{\alpha}}}\\
&\lesssim \sup_{s\in[r,T]} (\|y_{s}-y_{s-r}  \|_{\theta-\alpha-\bar{\alpha}}\|y_{s-r}\|_{\theta+\sigma_{2}-\alpha-\bar{\alpha}   })   T^{2\alpha-2\bar{\alpha}}\\
&\lesssim_{\hat{\mathcal{M}},T} r^{\alpha}.
\end{align*}
As for $I^{1,2}_{t,s}$, we have 
\begin{align*}
&~~~~\sup_{t,s\in[u,v]}\frac{\|I^{1,2}_{t,s}\|_{\theta-\sigma_{2}-\alpha-\bar{\alpha}}}{(t-s)^{2\bar{\alpha}}}\\
&\lesssim  \sup_{t\in[r,T]} (\|y_{t-r}-y_{t-2r}\|_{\theta+\sigma_{2}-\alpha-\bar{\alpha}   })   T^{2\alpha-2\bar{\alpha}}\\
&\lesssim_{\hat{\mathcal{M}},T} r^{\alpha}.
\end{align*}
And, based on the assumption (\ref{eq707}), it is easy to have 
\begin{align*}
&~~~~\sup_{t,s\in[u,v]}\frac{\|I^{1,3}_{t,s}\|_{\theta-\sigma_{2}-\alpha-\bar{\alpha}}}{(t-s)^{2\bar{\alpha}}}\lesssim_{\hat{\mathcal{M}},T} h(r).
\end{align*}

For  the third term, on the one hand, we
note 
\begin{align*}
\bar{R}^{\mathbb{G}}_{t,s}&=  \mathcal{G}(y_{t})-\mathcal{G}(y_{t-r})- \mathcal{G}(y_{s})+\mathcal{G}(y_{s-r})-D\mathcal{G}(y_{s})\circ \mathcal{G}(y_{s-r})\cdot \delta X_{t,s}+D\mathcal{G}(y_{s-r})\circ \mathcal{G}(y_{s-2r})\cdot \delta X_{t-r,s-r}\\
&= \int^{1}_{0}  [D\mathcal{G}(y_{s}+\tau (y_{t}-y_{s}))-D\mathcal{G}(y_{s})]\textup{d}\tau\circ\mathcal{G}(y_{s-r})\cdot \delta X_{t,s}+   \int^{1}_{0}D\mathcal{G}(y_{s}+\tau(y_{t}-y_{s}))\textup{d}\tau \circ R^{y}_{t,s}\\
&~~~+\int^{1}_{0}  [D\mathcal{G}(y_{s-r}+\tau (y_{t-r}-y_{s-r}))-D\mathcal{G}(y_{s-r})]\textup{d}\tau\circ\mathcal{G}(y_{s-2r})\cdot \delta X_{t-r,s-r}\\
&~~~+   \int^{1}_{0}D\mathcal{G}(y_{s-r}+\tau(y_{t-r}-y_{s-r}))\textup{d}\tau \circ R^{y}_{t-r,s-r}.
\end{align*}
It holds
\begin{align*}
\sup_{\substack{t,s\in[u,v]\\ 0<t-s<r}    }\frac{\|\bar{R}^{\mathbb{G}}_{t,s}\|_{\theta-\sigma_{2}-\alpha-2\bar{\alpha}}}{ (t-s)^{2\bar{\alpha}}    }\lesssim \sup_{\substack{t,s\in[r,T]\\ 0<t-s<r}    }\frac{\|\bar{R}^{\mathbb{G}}_{t,s}\|_{\theta-\sigma_{2}-\alpha-2\bar{\alpha}}}{ (t-s)^{2\alpha}    } (t-s)^{ 2(\alpha-\bar{\alpha})     }
\lesssim_{\hat{\mathcal{M}},T} r^{2(\alpha-\bar{\alpha})}.
\end{align*}
On the other hand, we	rewrite 
\begin{align*}
\bar{R}^{\mathbb{G}}_{t,s}=\sum_{i=1}^{4} \mathbb{K}^{i}_{t,s},
\end{align*}
where
\begin{align*}
\mathbb{K}^{1}_{t,s}&=
\int^{1}_{0}[D\mathcal{G}(y_{t-r}+\tau(y_{t}-y_{t-r}))\circ \mathcal{G}(y_{t-2r})- D\mathcal{G}(y_{s-r})\circ \mathcal{G}(y_{s-2r})]\textup{d}\tau\cdot\delta X_{t,t-r}\\
\mathbb{K}^{2}_{t,s}&=
\int^{1}_{0} [ D\mathcal{G}(y_{s-r}) -D\mathcal{G}(y_{s-r}+\tau(y_{s}-y_{s-r}))]\textup{d}\tau\circ \mathcal{G}(y_{s-2r})\cdot\delta X_{s,s-r}\\
\mathbb{K}^{3}_{t,s}&=  [ D\mathcal{G}(y_{s-r})\circ \mathcal{G}(y_{s-2r})-D\mathcal{G}(y_{s})\circ \mathcal{G}(y_{s-r}) ]\textup{d}\tau\cdot\delta X_{t,s}\\
\mathbb{K}^{4}_{t,s}&=\int^{1}_{0}D\mathcal{G}(y_{t-r}+\tau(y_{t}-y_{t-r}))\textup{d}\tau \circ R^{y}_{t,t-r}-\int^{1}_{0}D\mathcal{G}(y_{s-r}+\tau(y_{s}-y_{s-r}))\textup{d}\tau \circ R^{y}_{s,s-r}.
\end{align*}
For $i=1,\cdots,4$, it holds 
\begin{align*}
\sup_{\substack{t,s\in[u,v]\\ t-s\geq r}    }\frac{\|\mathbb{K}^{i}_{t,s}\|_{\theta-\sigma_{2}-\alpha-2\bar{\alpha}}}{ (t-s)^{2\bar{\alpha}}    }\lesssim_{\hat{\mathcal{M}},T} r^{\alpha-\bar{\alpha}}.
\end{align*}		
Then, we obtain 
\begin{align*}
\sup_{t,l,s\in[u,v]} \frac{\|\delta X_{t,l} \cdot\bar{R}^{\mathbb{G}}_{l,s}\|_{\theta-\sigma_{2}-\alpha-2\bar{\alpha}}}{ (t-l)^{\bar{\alpha}}(l-s)^{2\bar{\alpha}} }\lesssim_{\hat{\mathcal{M}},T}r^{\alpha-\bar{\alpha}}.
\end{align*}

For the fourth term, we write 
\begin{align*}
&~~~~\delta \mathbb{G}^{\prime}_{l,s}:\mathbb{X}_{t,l}+   \delta \bar{\mathbb{G}}^{\prime}_{l,s}:\mathbb{X}(-r)_{t,l}\\
&=[D\mathcal{G}(y_{l})\circ \mathcal{G}(y_{l-r})- D\mathcal{G}(y_{s})\circ \mathcal{G}(y_{s-r})]:\mathbb{X}_{t,l}-[D\mathcal{G}(y_{l-r})\circ \mathcal{G}(y_{l-2r})- D\mathcal{G}(y_{s-r})\circ \mathcal{G}(y_{s-2r})]:\mathbb{X}(-r)_{t,l}.
\end{align*}
Using the similar  arguments of the second and third terms, it holds 
\begin{align*}
\sup_{t,l,s\in[u,v]}\frac{\|\delta \mathbb{G}^{\prime}_{l,s}:\mathbb{X}_{t,l}+   \delta \bar{\mathbb{G}}^{\prime}_{l,s}:\mathbb{X}(-r)_{t,l}  \|_{\theta-\sigma_{2}-\alpha-2\bar{\alpha}   }        } { (t-l)^{2\bar{\alpha}}(l-s)^{\bar{\alpha}}}   \lesssim_{   \hat{\mathcal{M}},T}r^{\alpha-\bar{\alpha}}+ h(r).
\end{align*}
Combining the above estimates, we  obtain there exists a suitable constant $c_{1}$  such that
\begin{align*}
\|I^{1}_{t,s}\|_{\theta-\alpha}\lesssim_{\hat{\mathcal{M}},T}r^{c_{1}}+ h(r).
\end{align*}

Now, it is turn to consider $I^{2}_{t,s}$. 
In fact, we have 
\begin{align*}
\|S_{t,s}\mathbb{G}_{s}\cdot \delta X_{t,s}\|_{\theta-\alpha}\lesssim\|\mathbb{G}_{s}\|_{\theta-2\alpha} \lesssim_{\hat{\mathcal{M}},T} r^{\alpha}.
\end{align*}
And,  using (\ref{eq711}), we derive 
\begin{align*}
&~~~~\|S_{t,s}\big\{\mathbb{G}'_{s}:\mathbb{X}_{t,s}+\bar{\mathbb{G}}'_{s}:\mathbb{X}(-r)_{t,s}\big\}\|_{\theta-\alpha}\\
&\lesssim \|\mathbb{G}'_{s}:\mathbb{X}_{t,s}+\bar{\mathbb{G}}'_{s}:\mathbb{X}(-r)_{t,s}\|_{\theta-\alpha-2\bar{\alpha}}(t-s)^{-2\bar{\alpha}}\\
&\lesssim_{\hat{\mathcal{M}},T} r^{\alpha}+h(r).
\end{align*}

Collecting  the all estimates, the proof is completed.
\end{proof}

\begin{Lemma}\label{lem311}
Let $(y,y') \in\mathcal{D}^{2\alpha}_{\mathbf{X},\theta}([0,T])$    be the mild solution of  $(\ref{eq701})$-$(\ref{eq702})$. 
For $r\leq u<v\leq T $,  there exists some constant $c>0$ such that
\begin{align*}
\rho_{2\bar{\alpha},2\alpha, \theta-\alpha,[u,v]}(\int^{\cdot}_{u} S_{\cdot,l}\mathcal{G}(y_{l})\cdot \textup{d}\bar{\mathbf{X}}_{l},\int^{\cdot}_{u} S_{\cdot,l}\mathcal{G}(y_{l-r})\cdot \textup{d}\bar{\mathbf{X}}_{l})\lesssim_{ \hat{\mathcal{M}},T } r^{c}+h(r).
\end{align*}
\end{Lemma}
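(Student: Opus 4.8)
The plan is to write the difference of the two rough convolutions as a single rough convolution of the ``delay defect'' $\mathbb{G}_{l}:=\mathcal{G}(y_{l})-\mathcal{G}(y_{l-r})$, and then to estimate each term of $\rho_{2\bar\alpha,2\alpha,\theta-\alpha,[u,v]}$ separately, reusing Lemma \ref{lem107} and the estimates that already appear inside its proof. By Lemma \ref{lem001}, the triple $(\mathbb{G},\mathbb{G}',\bar{\mathbb{G}}')$ with $\mathbb{G}'_{l}=D\mathcal{G}(y_{l})\circ\mathcal{G}(y_{l-r})$ and $\bar{\mathbb{G}}'_{l}=-D\mathcal{G}(y_{l-r})\circ\mathcal{G}(y_{l-2r})$ is a delayed controlled path on $[u,v]\subset[r,T]$, and, by linearity of the rough convolution together with $(i)$ of Lemma \ref{lem006} (applied after translating the interval), the controlled path
\begin{align*}
	\zeta_{t}:=\int^{t}_{u}S_{t,l}\mathcal{G}(y_{l})\cdot\textup{d}\bar{\mathbf{X}}_{l}-\int^{t}_{u}S_{t,l}\mathcal{G}(y_{l-r})\cdot\textup{d}\bar{\mathbf{X}}_{l}=\int^{t}_{u}S_{t,l}\mathbb{G}_{l}\cdot\textup{d}\bar{\mathbf{X}}_{l}
\end{align*}
is based on $\mathbf{X}$ with Gubinelli derivative $\mathbb{G}$ and vanishing delayed derivative. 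Since the two convolutions have Gubinelli derivatives $\mathcal{G}(y_{\cdot})$, $\mathcal{G}(y_{\cdot-r})$ and zero delayed derivatives, the definition of $\rho$ reduces the claim to bounding, each by $\lesssim_{\hat{\mathcal{M}},T}r^{c}+h(r)$: $(i)$ $\|\zeta\|_{\infty,\theta-\alpha,[u,v]}$; $(ii)$ $\|\mathbb{G}\|_{\infty,\theta-2\alpha,[u,v]}$ and $\|\delta\mathbb{G}\|_{\bar\alpha,\theta-3\alpha,[u,v]}$; $(iii)$ $\|R^{\zeta}\|_{\bar\alpha,\theta-2\alpha,[u,v]}$ and $\|R^{\zeta}\|_{2\bar\alpha,\theta-3\alpha,[u,v]}$, where $R^{\zeta}_{t,s}:=\delta\zeta_{t,s}-\mathbb{G}_{s}\cdot\delta X_{t,s}$.

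Item $(i)$ is precisely Lemma \ref{lem107} (with $v$ replaced by a running $t\in[u,v]$). For $(ii)$, $\hat{\mathbf{H}}\mathbf{4}$ makes $\mathcal{G}$ Lipschitz from $\mathcal{B}_{\theta-2\alpha+\sigma_{2}}$ into $\mathcal{B}_{\theta-2\alpha}$ and $\theta-2\alpha+\sigma_{2}\leq\theta-\alpha$ (since $\sigma_{2}<\alpha$), so together with the H\"{o}lder bound $\|\delta y\|_{\alpha,\theta-\alpha}\lesssim_{\hat{\mathcal{M}}}1$ (which holds as in (\ref{eq103}); cf. the Remark preceding this lemma) one gets $\|\mathbb{G}_{t}\|_{\theta-2\alpha}\lesssim\|y_{t}-y_{t-r}\|_{\theta-\alpha}\lesssim_{\hat{\mathcal{M}}}r^{\alpha}$. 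For the increment, split $0<t-s<r$, where $\|\delta\mathbb{G}_{t,s}\|_{\theta-3\alpha}\leq\|\delta\mathbb{G}_{t,s}\|_{\theta-2\alpha}\lesssim_{\hat{\mathcal{M}}}(t-s)^{\alpha}\leq r^{\alpha-\bar\alpha}(t-s)^{\bar\alpha}$, and $t-s\geq r$, where $\|\delta\mathbb{G}_{t,s}\|_{\theta-3\alpha}\leq2\|\mathbb{G}\|_{\infty,\theta-3\alpha}\lesssim_{\hat{\mathcal{M}}}r^{\alpha}\leq r^{\alpha-\bar\alpha}(t-s)^{\bar\alpha}$; hence $\|\delta\mathbb{G}\|_{\bar\alpha,\theta-3\alpha}\lesssim_{\hat{\mathcal{M}}}r^{\alpha-\bar\alpha}$.

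For $(iii)$ I use the semigroup identity $\zeta_{t}=S_{t,s}\zeta_{s}+\int^{t}_{s}S_{t,l}\mathbb{G}_{l}\cdot\textup{d}\bar{\mathbf{X}}_{l}$ to decompose
\begin{align*}
	R^{\zeta}_{t,s}=(S_{t,s}-\mathrm{id})\zeta_{s}+I^{1}_{t,s}+(S_{t,s}-\mathrm{id})\mathbb{G}_{s}\cdot\delta X_{t,s}+S_{t,s}\bigl[\mathbb{G}'_{s}:\mathbb{X}_{t,s}+\bar{\mathbb{G}}'_{s}:\mathbb{X}(-r)_{t,s}\bigr],
\end{align*}
where $I^{1}_{t,s}:=\int^{t}_{s}S_{t,l}\mathbb{G}_{l}\cdot\textup{d}\bar{\mathbf{X}}_{l}-S_{t,s}[\mathbb{G}_{s}\cdot\delta X_{t,s}+\mathbb{G}'_{s}:\mathbb{X}_{t,s}+\bar{\mathbb{G}}'_{s}:\mathbb{X}(-r)_{t,s}]$ is exactly the term $I^{1}_{t,s}$ treated in the proof of Lemma \ref{lem107}. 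The first summand is controlled by (\ref{eq050}) and $(i)$, since $\|(S_{t,s}-\mathrm{id})\zeta_{s}\|_{\theta-i\alpha}\lesssim(t-s)^{(i-1)\alpha}\|\zeta_{s}\|_{\theta-\alpha}$ for $i=2,3$ and $\alpha\geq\bar\alpha$; the third summand by $\|(S_{t,s}-\mathrm{id})\mathbb{G}_{s}\|_{\theta-2\alpha}\lesssim\|\mathbb{G}_{s}\|_{\theta-2\alpha}\lesssim r^{\alpha}$ (item $(ii)$) together with $|\delta X_{t,s}|\lesssim(t-s)^{\alpha}$ and, for the $\mathcal{B}_{\theta-3\alpha}$ norm, an extra $(t-s)^{\alpha}$ of smoothing. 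For $I^{1}_{t,s}$ and the last summand I re-read the corresponding estimates from the proof of Lemma \ref{lem107} in the spaces $\mathcal{B}_{\theta-2\alpha}$ and $\mathcal{B}_{\theta-3\alpha}$: there every germ piece — $\mathbb{G}_{s}\cdot\delta X$, the combination $\mathbb{G}'_{s}:\mathbb{X}+\bar{\mathbb{G}}'_{s}:\mathbb{X}(-r)$ split as in (\ref{eq711}), $\delta X\cdot\bar{R}^{\mathbb{G}}$, and $\delta\mathbb{G}'_{l,s}:\mathbb{X}+\delta\bar{\mathbb{G}}'_{l,s}:\mathbb{X}(-r)$ — is already shown to be of size $r^{\alpha}$ (H\"{o}lder regularity of $y$) or $h(r)$ (assumption (\ref{eq707})) times the appropriate $\bar\alpha$-power of the increment, and the ensuing semigroup smoothing is absorbed into the time increments precisely because of the numerical conditions $\sigma_{2}\in(\alpha/2,\alpha)$, $\sigma_{2}+2\bar\alpha-2\alpha\geq0$, $3\bar\alpha-2\alpha-\sigma_{2}\geq0$ built into $\hat{\mathbf{H}}\mathbf{1}$. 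Collecting the bounds gives the statement with, say, $c:=\min\{c_{1},\alpha-\bar\alpha\}>0$, $c_{1}$ being the exponent furnished by Lemma \ref{lem107}.

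The delicate point I expect is item $(iii)$, and within it the term $I^{1}$: one cannot afford the crude estimate $\|\mathbb{G}\cdot\delta X+\mathbb{G}':\mathbb{X}+\bar{\mathbb{G}}':\mathbb{X}(-r)\|_{\mathcal{J}^{\alpha}_{\theta-\sigma_{2}}}\lesssim\rho_{\alpha}(\bar{\mathbf{X}})\,\|\mathbb{G},\mathbb{G}',\bar{\mathbb{G}}'\|_{\bar{\mathbf{X}},2\alpha,\theta-\sigma_{2}}$ coming from Lemma \ref{lem003}, because $\|\mathbb{G}'\|$ and $\|\bar{\mathbb{G}}'\|$ carry no smallness in $r$ — the smallness lives only in the combination $\mathbb{G}'_{s}:\mathbb{X}_{t,s}+\bar{\mathbb{G}}'_{s}:\mathbb{X}(-r)_{t,s}$. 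Hence the sewing-type estimate has to be carried out keeping that combination intact (which is exactly what the proof of Lemma \ref{lem107} does, and is why that lemma is reused here), and on top of it tracked in the finer spatial scales $\mathcal{B}_{\theta-2\alpha}$, $\mathcal{B}_{\theta-3\alpha}$ against the time exponents $\bar\alpha$, $2\bar\alpha$; this is the step where the margins in $\hat{\mathbf{H}}\mathbf{1}$ are genuinely used.
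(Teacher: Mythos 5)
Your proposal is correct and follows essentially the same route as the paper: item $(i)$ is quoted from Lemma~\ref{lem107}, items $(ii)$ are the direct bounds on $\mathbb{G}$ and $\delta\mathbb{G}$ (which the paper states with a brief ``similar to the proof of Lemma~\ref{lem107}''), and item $(iii)$ amounts to the remainder $\mathcal{R}_{t,s}=R^{\zeta}_{t,s}$ that the paper disposes of by saying ``by analogous proof in Lemmas~\ref{lem006} and~\ref{lem107}.'' Your explicit decomposition $R^{\zeta}_{t,s}=(S_{t,s}-\mathrm{id})\zeta_{s}+I^{1}_{t,s}+(S_{t,s}-\mathrm{id})\mathbb{G}_{s}\cdot\delta X_{t,s}+S_{t,s}[\mathbb{G}'_{s}:\mathbb{X}_{t,s}+\bar{\mathbb{G}}'_{s}:\mathbb{X}(-r)_{t,s}]$, and your observation that one must keep the combination $\mathbb{G}':\mathbb{X}+\bar{\mathbb{G}}':\mathbb{X}(-r)$ intact rather than invoking the crude $\mathcal{J}$-norm bound of Lemma~\ref{lem003}, simply spell out what the paper's appeal to Lemma~\ref{lem107} implicitly contains.
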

\begin{proof}
In Lemma \ref{lem107}, we have proven
\begin{align*}
\sup_{t\in[u,v]}	\|\int^{t}_{u}S_{t,s}[\mathcal{G}(y_{s})-\mathcal{G}(y_{s-r})]\cdot\textup{d}\bar{\mathbf{X}}_{s}\|_{\theta-\alpha}\lesssim_{ \hat{\mathcal{M}},T } r^{c}+h(r).
\end{align*}
Next, let us consider other terms. Recall $  \mathbb{G}_{t}=\mathcal{G}(y_{t})-\mathcal{G}(y_{t-r})$.

Firstly, we can show 
\begin{align*}
\sup_{t\in[u,v]}\|\mathbb{G}_{t}\|_{\theta-2\alpha}
\lesssim_{ \hat{\mathcal{M}},T }r^{\alpha}.
\end{align*}

Secondly, similar to the proof of Lemma \ref{lem107}, it follows
\begin{align*}
\sup_{t,s\in[u,v]}\frac{\|\mathbb{G}_{t}-\mathbb{G}_{s}\|_{\theta-3\alpha}}{(t-s)^{\bar{\alpha}}}\lesssim_{\hat{\mathcal{M}},T} r^{\alpha-\bar{\alpha}}.
\end{align*}

It remains to take into account 
\begin{align*}
\mathcal{R}_{t,s}:=\int^{t}_{u} S_{t,l}\mathbb{G}_{l}\cdot \textup{d}\bar{\mathbf{X}}_{l}-\int^{s}_{u} S_{s,l}\mathbb{G}_{l}\cdot \textup{d}\bar{\mathbf{X}}_{l}-\mathbb{G}_{s}\cdot \delta X_{t,s}.
\end{align*}
By analogous proof in Lemmas \ref{lem006} and \ref{lem107}, we claim that 
for $i=1,2$, there exists a suitable constant $c_{1}$ such that 
\begin{align*}
\sup_{ t,s\in[u,v]} \frac{\|\mathcal{R}_{t,s}\|_{\theta-\alpha-i\alpha}}{(t-s)^{i\bar{\alpha}}} \lesssim_{\hat{\mathcal{M}},T}    r^{c_{1}}+h(r).
\end{align*}		

Consequently, we complete the proof by collecting the above estimates.
\end{proof}

\begin{Theorem}\label{the003}
Let $(y,y') \in\mathcal{D}^{2\alpha}_{\mathbf{X},\theta}([0,T])$  and $(z,z')\in\mathcal{D}^{2\alpha}_{\mathbf{X},\theta}([0,T])$ be the mild solution of  $(\ref{eq701})$-$(\ref{eq702})$ and $(\ref{eq703})$-$(\ref{eq705})$, respectively.  Then, there exists some constant $c>0$ such that
\begin{align}\label{eq901}
\rho_{2\bar{\alpha},2\alpha, \theta-\alpha,[0,T]}(y,z)\lesssim_{\hat{\mathcal{M}}} r^{c}+h(r).
\end{align}
\end{Theorem}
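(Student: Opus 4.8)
The plan is to run the same interval-splitting strategy used throughout Section~4: estimate the distance $\rho_{2\bar\alpha,2\alpha,\theta-\alpha}(y,z)$ first on a short interval, obtain an inequality in which the distance on the left appears on the right multiplied by a small power of the interval length, absorb that term, and then iterate. Since the two equations $(\ref{eq701})$--$(\ref{eq702})$ and $(\ref{eq703})$--$(\ref{eq705})$ differ only in that $y$ uses the delayed driver $\bar{\mathbf X}$ and the delayed argument $y_{t-r}$ in $\mathcal G$, the key new inputs are Lemma~\ref{lem707} (which controls the distance on $[0,r]$ by $r^c$) and Lemmas~\ref{lem107}--\ref{lem311} (which control, on any subinterval of $[r,T]$, the discrepancy between the convolution of $\mathcal G(y_\cdot)$ and of $\mathcal G(y_{\cdot-r})$ by $r^c+h(r)$). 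The uniform bound $\hat{\mathcal M}<\infty$, independent of $r$, comes from the preceding lemma on $\|y,y'\|_{\mathbf X,2\alpha,\theta,[0,T]}$.

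First I would treat the interval $[0,r]$: Lemma~\ref{lem707} gives directly
\[
\rho_{2\bar\alpha,2\alpha,\theta-\alpha,[0,r]}(y,z)\lesssim_{\hat{\mathcal M}} r^{c},
\]
so in particular $\|y_r-z_r\|_{\theta-\alpha}\lesssim_{\hat{\mathcal M}} r^{c}$ and the controlled-path data of $(y,y')$ and $(z,z')$ at time $r$ are $r^c$-close. Next I would move to a generic short interval $[a,a+\tilde r]\subset[r,T]$ with $\tilde r\in(0,\hat r]$ and write both mild solutions in the integral form based at $a$. The difference $y-z$ on $[a,a+\tilde r]$ decomposes as: (i) the semigroup term $S_{\cdot,a}(y_a-z_a)$, whose contribution to the distance is $\lesssim \|y_a-z_a\|_{\theta-\alpha}$; (ii) the drift difference $\int_a^\cdot S_{\cdot,s}[\mathcal F(y_s)-\mathcal F(z_s)]\,\textup ds$, handled by the Lipschitz bound $\hat{\mathbf H}3$ and the smoothing estimates $(\ref{eq050})$ exactly as in Lemma~\ref{lem007}, producing a factor $\tilde r^{1-\sigma_1}\wedge\tilde r^{1-2\bar\alpha}$ times $\rho_{2\bar\alpha,2\alpha,\theta-\alpha,[a,a+\tilde r]}(y,z)$; and (iii) the rough-integral difference, which I split further as
\[
\int_a^\cdot S_{\cdot,s}\mathcal G(y_{s-r})\cdot\textup d\bar{\mathbf X}_s-\int_a^\cdot S_{\cdot,s}\mathcal G(y_s)\cdot\textup d\bar{\mathbf X}_s
\;+\;\int_a^\cdot S_{\cdot,s}\big[\mathcal G(y_s)\cdot\textup d\bar{\mathbf X}_s-\mathcal G(z_s)\cdot\textup d\mathbf X_s\big].
\]
The first bracket is bounded by $r^{c}+h(r)$ via Lemmas~\ref{lem107} and~\ref{lem311} (note that here the delay affects only the argument of $\mathcal G$, while the driver is the same $\bar{\mathbf X}$, so $\mathbb X(-r)$ cancels in the pieces that matter); the second bracket is the genuine stability-of-convolution term, estimated by Lemma~\ref{lem777} together with Lemma~\ref{lem005} (stability of the composition $\mathcal G$) and the assumption $(\ref{eq223})$, giving $\lesssim_{\hat{\mathcal M}}\big(\rho_{2\bar\alpha,2\alpha,\theta-\alpha,[a,a+\tilde r]}(y,z)\,\tilde r^{\nu}+\rho_{\alpha}(\bar{\mathbf X},\mathbf X)\big)$, and since $\mathbf X$ and $\bar{\mathbf X}$ share the same $(X,\mathbb X)$, the term $\rho_\alpha(\bar{\mathbf X},\mathbf X)$ is controlled by $(\ref{eq707})$ and hence by $h(r)$.

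Collecting these, on $[a,a+\tilde r]$ one gets
\[
\rho_{2\bar\alpha,2\alpha,\theta-\alpha,[a,a+\tilde r]}(y,z)\lesssim_{\hat{\mathcal M}} \tilde r^{\nu}\,\rho_{2\bar\alpha,2\alpha,\theta-\alpha,[a,a+\tilde r]}(y,z)+\|y_a-z_a\|_{\theta-\alpha}+r^{c}+h(r),
\]
where $\nu>0$ is the minimum of the various positive exponents appearing above; choosing $\tilde r$ small enough (depending only on $\hat{\mathcal M}$, $T$, and the structural exponents, not on $r$) absorbs the first term, leaving
\[
\rho_{2\bar\alpha,2\alpha,\theta-\alpha,[a,a+\tilde r]}(y,z)\lesssim_{\hat{\mathcal M}} \|y_a-z_a\|_{\theta-\alpha}+r^{c}+h(r).
\]
Starting from the bound at $a=r$ and concatenating over the $\lceil (T-r)/\tilde r\rceil$ subintervals — the number of which is bounded independently of $r$ — yields $(\ref{eq901})$, possibly after shrinking $c$; the geometric accumulation of constants over finitely many steps only changes the implicit constant in $\lesssim_{\hat{\mathcal M}}$. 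I expect the main obstacle to be the term~(iii): one must carefully verify that, when passing from $\mathcal G(y_{\cdot-r})$ to $\mathcal G(z_\cdot)$, the delayed controlled-path structure of $\mathcal G(y_{\cdot-r})$ (with its $\mathbb X(-r)$-Gubinelli component $\bar{\mathbb G}'$) is compared against the ordinary controlled-path structure of $\mathcal G(z_\cdot)$ in a way compatible with the distance $\rho_{2\bar\alpha,2\alpha,\theta-\alpha}$, and that the loss of one power of spatial regularity (working in $\mathcal B_{\theta-\alpha}$ rather than $\mathcal B_\theta$, as flagged in the Remark) is exactly what makes all the Hölder-in-time estimates for $y$ usable; bookkeeping the exponents so that $\nu>0$ and $c>0$ simultaneously is the delicate point, but it is forced by the standing assumption $\bar\alpha\in(\tfrac{4\alpha}{5},\alpha)$ with $3\bar\alpha-2\alpha-\sigma_2\ge 0$.
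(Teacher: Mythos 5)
Your proposal matches the paper's proof in structure and in all the essential ingredients: Lemma \ref{lem707} on $[0,r]$, the decomposition $S_{\cdot,s}(y_s-z_s)$ plus drift plus rough integral on short intervals of $[r,T]$, the further split of the rough-integral difference into the ``delay in the argument of $\mathcal G$'' piece (handled by Lemmas \ref{lem107}--\ref{lem311}) and the ``stability of convolution'' piece (handled by Lemma \ref{lem777}), absorption of the $\tilde r^\nu\rho$-term, and concatenation over a number of subintervals independent of $r$. One small inaccuracy worth noting: in the second bracket $\int S[\mathcal G(y_s)\,\textup d\bar{\mathbf X}_s - \mathcal G(z_s)\,\textup d\mathbf X_s]$ both integrands are ordinary controlled paths (their delayed Gubinelli components vanish), so the $\mathbb X(-r)$ component never enters and the rough convolutions with $\bar{\mathbf X}$ and with $\mathbf X$ literally coincide; the paper's $\rho_3$ therefore carries no $h(r)$ and your extra $\rho_\alpha(\bar{\mathbf X},\mathbf X)\lesssim h(r)$ term is harmless but superfluous.
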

\begin{proof}
In Lemma \ref{lem707}, we state 
\begin{align*}
\rho_{2\bar{\alpha},2\alpha, \theta-\alpha,[0,r]}(y,z)\lesssim_{\hat{\mathcal{M}}} r^{c}.
\end{align*}
Next, let us consider \begin{align*}
\rho_{2\bar{\alpha},2\alpha, \theta-\alpha,[s,t]}(y,z)\lesssim_{\hat{\mathcal{M}}} r^{c},
\end{align*}
where $t,s\in [r,T]$ with $0<t-s\leq 1$.

Note 
\begin{align*}
\rho_{2\bar{\alpha},2\alpha, \theta-\alpha,[s,t]}(y,z)\leq \sum_{i=1}^{4}\rho_{i},
\end{align*}
where 
\begin{align*}
\rho_{1}&=\rho_{2\bar{\alpha},2\alpha, \theta-\alpha,[s,t]}(S_{\cdot,s}y_{s},S_{\cdot,s}z_{s})\\
\rho_{2}&=\rho_{2\bar{\alpha},2\alpha, \theta-\alpha,[s,t]}(\int^{\cdot}_{s}S_{\cdot,u}\mathcal{F}(y_{u})\textup{d}u,
\int^{\cdot}_{s}S_{\cdot,u}\mathcal{F}(z_{u})\textup{d}u)\\
\rho_{3}&=\rho_{2\bar{\alpha},2\alpha, \theta-\alpha,[s,t]}(\int^{\cdot}_{s}S_{\cdot,u}\mathcal{G}(y_{u})\cdot\textup{d}\bar{\mathbf{X}}_{u},
\int^{\cdot}_{s}S_{\cdot,u}\mathcal{G}(z_{u})\cdot\textup{d}\bar{\mathbf{X}}_{u})\\
\rho_{4}&=\rho_{2\bar{\alpha},2\alpha, \theta-\alpha,[s,t]}(\int^{\cdot}_{s}S_{\cdot,u}\mathcal{G}(y_{u-r})\cdot\textup{d}\bar{\mathbf{X}}_{u},
\int^{\cdot}_{s}S_{\cdot,u}\mathcal{G}(y_{u})\cdot\textup{d}\bar{\mathbf{X}}_{u}).
\end{align*}
As for $\rho_{1}$, we have
\begin{align*}
\rho_{1}\lesssim \|y_{s}-z_{s}\|_{\theta-\alpha}.
\end{align*}
Set $\lambda_{\star}:=\min\{1-\sigma_{1},1-2\alpha,\alpha-\bar{\alpha}, \frac{\bar{\alpha}   (\alpha-\sigma)}{\alpha}, 3\bar{\alpha}-2\alpha-\sigma_{2}\}$.
Similar to the proof of Lemma \ref{lem007}, it is not hard to deduce 
\begin{align*}
\rho_{2}\lesssim \rho_{2\bar{\alpha},2\alpha, \theta-\alpha,[s,t]}(y,z)(t-s)^{\lambda_{\star}}.
\end{align*}  
Lemma \ref{lem777} also implies 
\begin{align*}
\rho_{3}\lesssim \rho_{2\bar{\alpha},2\alpha, \theta-\alpha,[s,t]}(y,z)(t-s)^{\lambda_{\star}}+  \|y_{s}-z_{s}\|_{\theta-\alpha}.
\end{align*}  
From Lemma \ref{lem311}, we know 
\begin{align*}
\rho_{4}\lesssim_{\hat{\mathcal{M}},T} r^{c}+h(r).
\end{align*} 
Combining the above estimates, we obtain
\begin{align*}
\rho_{2\bar{\alpha},2\alpha, \theta-\alpha,[s,t]}(y,z)\lesssim _{\hat{\mathcal{M}},T} \rho_{2\bar{\alpha},2\alpha, \theta-\alpha,[s,t]}(y,z)(t-s)^{\lambda_{\star}}+  \|y_{s}-z_{s}\|_{\theta-\alpha}+r^{c}+h(r).
\end{align*}
If we choose  $(t-s)$ such that $   (t-s)^{\lambda_{\star}}<\frac{1}{2}$, then it holds
\begin{align*}
\rho_{2\bar{\alpha},2\alpha, \theta-\alpha,[s,t]}(y,z)\lesssim _{\hat{\mathcal{M}},T}  \|y_{s}-z_{s}\|_{\theta-\alpha}+r^{c}+h(r).
\end{align*}
We emphasize that $t-s$ does not depends on the delay $r$.
Then, for $(s-r)^{\lambda_{\star}}<\frac{1}{2}$, we employ Lemma \ref{lem707} to show 
\begin{align*}
\rho_{2\bar{\alpha},2\alpha, \theta-\alpha,[r,s]}(y,z)&\lesssim _{\hat{\mathcal{M}},T}  \|y_{r}-z_{r}\|_{\theta-\alpha}+r^{c}+h(r)\\
&\lesssim_{\hat{\mathcal{M}},T}  r^{c}+h(r).
\end{align*}
Thus, we obtain 
\begin{align*}
	\|y_{s}-z_{s}\|_{\theta-\alpha}\leq_{\hat{\mathcal{M}},T}  r^{c}+h(r).
\end{align*}
Repeating this process, we can obtain that if   $   (t-s)^{\lambda_{\star}}<\frac{1}{2}$, it holds
\begin{align*}
\rho_{2\bar{\alpha},2\alpha, \theta-\alpha,[s,t]}(y,z)\lesssim _{\hat{\mathcal{M}},T}  r^{c}+h(r).
\end{align*}
As a consequence, collecting the estimate on each interval, we obtain (\ref{eq901}) as desired.
\end{proof}

\section{Example}
In this section, we introduce some equations that fall within the scope of our main results. 

Let $B=\{B_{t}:B_{t}=(B^{1}_{t}, B^{2}_{t}, \cdots, B^{d}_{t}), t\geq 0\}$
is a $d$-dimensional fractional Brownian motion with Hurst parameter $\alpha\in(\frac{1}{3},\frac{1}{2})$. 

We  use Russo-Vallois symmetric approximations (Section 5 in \cite{MR2453555}) to
define the integrals
\begin{align}
&\mathbb{B}^{i,j}_{t,s}=\int^{t}_{s}\textup{d}^{\circ}B^{i}_{u}\int^{u}_{s}\textup{d}^{\circ}B^{j}_{l},~~\textup{for}~i, j\in\{1,2, \cdots, d\}, \label{eq961}\\
&\mathbb{B}(-r)^{i,j}_{t,s}=\int^{t}_{s}\textup{d}^{\circ}B^{i}_{u}\int^{u-r}_{s-r}\textup{d}^{\circ}B^{j}_{l} ,~~\textup{for}~i, j\in\{1,2, \cdots, d\}. \label{eq962}
\end{align}
In Proposition 5.2 in  \cite{MR2453555} , the authors proved that $\bar{\mathbf{B}}=(B, \mathbb{B},\mathbb{B}(-r))$ satisfies Definition \ref{def1}.

Let $W=\{W_{t}:W_{t}=(W^{1}_{t}, W^{2}_{t}, \cdots, W^{d}_{t}), t\geq 0\}$
is a $d$-dimensional Brownian motion.

We use Stratonovich integral to define 
\begin{align}
&\mathbb{W}^{i,j}_{t,s}=\int^{t}_{s}\textup{d}^{\circ}W^{i}_{u}\int^{u}_{s}\textup{d}^{\circ}W^{j}_{l}-\frac{1}{2}(t-s)\delta_{i,j},~~\textup{for}~i, j\in\{1,2, \cdots, d\}, \label{eq963}\\
&\mathbb{W}(-r)^{i,j}_{t,s}=\int^{t}_{s}\textup{d}^{\circ}W^{i}_{u}\int^{u-r}_{s-r}\textup{d}^{\circ}W^{j}_{l} ,~~\textup{for}~i, j\in\{1,2, \cdots, d\},\label{eq964}
\end{align}
$\delta_{i,j}$ represents Kronecker function.

It is easy to check $\bar{\mathbf{W}}=(W, \mathbb{W},\mathbb{W}(-r))$ also satisfies Definition \ref{def1} in this case. Moreover, we state
that for  $0<\tilde{\alpha}<\alpha$,  it holds
\begin{align}\label{eq991}
\lim_{r\rightarrow 0}\|\mathbb{W}-\mathbb{W}(-r)\|_{2\tilde{\alpha},\mathbb{R}^{d}\times \mathbb{R}^{d}}=0.
\end{align} 
In fact, the proof of (\ref{eq991}) is the slight modification of that of Lemma 7.2 in \cite{MR4117091}.

Set $ p>1, r>0, \sigma\in (\frac{\alpha}{2}, \alpha), \tilde{\alpha}\in (\sigma, \alpha)$. Let $\mathbb{T}^{d}$  be the $d$-dimensional torus. For $\theta\in \mathbb{R}$, introduce $\mathcal{B}_{\theta} = H^{2\theta,p}(\mathbb{T}^{d})$, where $H^{2\theta,p}(\mathbb{T}^{d})$ are Bessel potential spaces.

{\bf Existence of the global solution}

Consider the following  delay rough partial differential equation
\begin{align}
\textup{d}y&=[\Delta y_{t}+F(y_{t},y_{t-r})]    	\textup{d}t+  [(-\Delta)^{\sigma} y_{t}+   (-\Delta)^{\sigma} y_{t-r}]\cdot\textup{d}\bar{\mathbf{B}}_{t},\label{eq971}\\
y_{t}&=\phi_{t}, ~~t\in[-r,0], \label{eq972}
\end{align} 
where $\bar{\mathbf{B}}_{t}$ is defined by either (\ref{eq961})-(\ref{eq962}) or  (\ref{eq963})-(\ref{eq964}), $F$ is a  nonlinear operator satisfying $\mathbf{H}_{3}$, $(\phi,\phi')$ is a controlled path based on $\mathbf{B}$ belonging to $\mathcal{D}^{2\tilde{\alpha}}_{\mathbf{B},\theta}([-r,0])\cap\mathcal{D}^{2\alpha}_{\mathbf{B},\theta}([-r,0])$. We note $G(y_{t},y_{t-r}):= (-\Delta)^{\sigma} y_{t}+   (-\Delta)^{\sigma} y_{t-r}$ is an operator from $B^{2}_{\hat{\theta}}$ to $B_{\hat{\theta}-\sigma}$ for every $\hat{\theta}\in \mathbb{R}$.
Therefore, applying Theorem\ref{Cor002}, we know that there exists a global solution $(y,y')\in \mathcal{D}^{2\tilde{\alpha}}_{\mathbf{B},\theta}([0,T])\cap\mathcal{D}^{2\alpha}_{\mathbf{B},\theta}([0,T]) $  for (\ref{eq971})-(\ref{eq972}).

{\bf Convergence with respect to the delay}

Consider the following  two  rough partial differential equations
\begin{align}
\textup{d}y&=[\Delta y_{t}+\mathcal{F}(y_{t})]    	\textup{d}t+  (-\Delta)^{\sigma} y_{t-r} \cdot\textup{d}\bar{\mathbf{W}}_{t},\label{eq973}\\
y_{t}&=\psi_{t}, ~~t\in[-r,0], \label{eq974}
\end{align} 
and 
\begin{align}
\textup{d}z&=[\Delta z_{t}+\mathcal{F}(z_{t})] \textup{d}t+  (-\Delta)^{\sigma} z_{t}\cdot\textup{d}\mathbf{W}_{t},\label{eq975}\\
z_{0}&=\psi_{0}, \label{eq976}
\end{align} 
where $\bar{\mathbf{W}}_{t}$ and $\mathbf{W}_{t}$ can be  defined by  (\ref{eq963})-(\ref{eq964}), $\mathcal{F}$ is a  nonlinear operator satisfying $\hat{\mathbf{H}}_{3}$, $(\psi,\psi')$ is a controlled path based on $\mathbf{W}$ belonging to $\mathcal{D}^{2\tilde{\alpha}}_{\mathbf{B},\theta}([-r_{0},0])\cap\mathcal{D}^{2\alpha}_{\mathbf{B},\theta}([-r_{0},0])$ with $r_{0}\geq r$. From Theorem \ref{Cor002}, we can obtain the existence of the global solutions of (\ref{eq973})-(\ref{eq974}) and  (\ref{eq975})-(\ref{eq976}). Taking  $\bar{\alpha}\in (\frac{4\alpha}{5},\alpha )$  satisfying $ \sigma_{2}+2\bar{\alpha}-2\alpha\geq 0$ and  $3\bar{\alpha}-2\alpha-\sigma_{2}\geq 0$, it follows from  Theorem \ref{the003} that
\begin{align*}
\lim_{r\rightarrow 0}\rho_{2\bar{\alpha},2\alpha, \theta-\alpha,[0,T]}(y,z)=0.
\end{align*} 
\section{Appendix}
\subsection{A.1}\label{A.1}
	The followings are devoted to the definition of rough path \cite{MR3957994,MR4040992}. 

\begin{Definition}\label{A.11}
	[$\alpha$-Hölder path] Let $X:\mathbb{R}\rightarrow \mathbb{R}^{d}$ be a locally $\alpha$-H\"older path. For a compact interval $I \subset \mathbb{R}$, $\alpha\in (\frac{1}{3},\frac{1}{2}]$, we call a pair  $\mathbf{X}=\left(X, \mathbb{X}\right)$  $\alpha$-Hölder rough path  if $X\in \mathcal{C}^{\alpha}_{1}(I,\mathbb{R}^d)$ and $\mathbb{X}\in \mathcal{C}^{2 \alpha}_{2}\left(I,\mathbb{R}^d \times \mathbb{R}^d\right)$, and the following  Chen's relation holds:
	\begin{align*}
		\mathbb{X}^{i,j}_{t,s}-\mathbb{X}^{i,j}_{u,s}-\mathbb{X}^{i,j}_{t,u}=\delta X^{i}_{u,s}\otimes \delta X^{j}_{t,u},
	\end{align*}
	for  $t,u,s \in \Delta_{3,I}$, and $1\leq i, j\leq d$.
\end{Definition}

For $\alpha\in (\frac{1}{3},\frac{1}{2}]$, denote  by $\mathcal{C}^{\alpha}(I,\mathbb{R}^d)$ the space of $\alpha$-H\"older rough paths  $\mathbf{X}=(X, \mathbb{X})$ with respect to the interval $I$.
For two $\alpha$-Hölder rough paths $\mathbf{X}, \mathbf{Y}\in \mathcal{C}^{\alpha}(I, \mathbb{R}^d)$,  the metric of $\mathcal{C}^{\alpha}(I, \mathbb{R}^d)$ is defined by 
\begin{align*}
	\rho_{\alpha,I}(\mathbf{X},\mathbf{Y})=\|\delta(X-Y)\|_{\alpha,\mathbb{R}^{d}}+\|\mathbb{X}-\mathbb{Y}\|_{2\alpha,\mathbb{R}^{d}\times \mathbb{R}^{d}}.
\end{align*}
And, set $\rho_{\alpha,I}(\mathbf{X})=	\rho_{\alpha,I}(\mathbf{X},0)$.

\begin{Definition}[Controlled  path] \label{def001}
	Let  $\mathbf{X}=(X, \mathbb{X})\in\mathcal{C}^{\alpha}(I, \mathbb{R}^d)$ with $I$. We call a pair $(y,y')$  a controlled  path based on $\mathbf{X}$ if $y\in\mathcal{C}_{1}(I,\mathcal{B}_{\theta})$ and $y'\in (\mathcal{C}^{0,\alpha}_{\theta-\alpha}(I))^{d}$  for $\theta\in\mathbb{R}$ such that 
	\begin{align*}
		R^{y}_{t,s}:=\delta y_{t,s}-y'_{s}\cdot \delta X_{t,s}=\delta y_{t,s}-\sum^{d}_{i=1}y'^{,i}_{s}\delta X^{i}_{t,s},~\text{for}~t,s \in \Delta_{2,I},
	\end{align*}
	belongs to $\mathcal{C}_{\theta}^{\alpha,2\alpha}(I)$.
\end{Definition}

We denote by $\mathcal{D}^{2\alpha}_{\mathbf{X},\theta} (I)$ the space of controlled rough paths based on $\mathbf{X}$ with $I$, and endow the norm:
\begin{align*}
	\|y,y'\|_{\mathbf{X},2\alpha,\theta}:=\|y\|_{\infty,\theta}+\|y'\|_{\infty,\theta-\alpha}+\|y'\|_{\alpha,\theta-2\alpha}+\|R^{y}\|_{\alpha,\theta-\alpha}+\|R^{y}\|_{2\alpha,\theta-2\alpha},
\end{align*}
where we use $\|y'\|_{\infty,\theta-\alpha}:=\sum^{d}_{i=1} \|y'^{,i}\|_{\infty,\theta-\alpha} $, and  $\|y'\|_{\alpha,\theta-2\alpha}:=\sum^{d}_{i=1} \|y'^{,i}\|_{\alpha,\theta-2\alpha}$ for simplicity.

For $y$ in Definition \ref{def001}, it holds:
\begin{align}
	\|\delta y\|_{\alpha,\theta-\alpha}&\leq \|y'\|_{\infty,\theta-\alpha}\|\delta X\|_{\alpha, \mathbb{R}^{d}}+  	\|\delta R^{y}\|_{\alpha,\theta-\alpha}\notag\\
	&\leq (1+\rho_{\alpha, I}(\mathbf{X})) \|y,y'\|_{\mathbf{X},2\alpha,\theta}.\label{eq100}
\end{align}

	For two rough path $(y,y')$ and $(z,z')$, we introduce the distance:
\begin{align*}
	\rho_{2\tilde{\alpha},2\alpha, \theta}(y,z):&= \|y-z\|_{\infty,\theta}+\|y'-z'\|_{\infty,\theta-\alpha}+\|\delta(y'-z')\|_{\tilde{\alpha},\theta-2\alpha}+\|R^{y}-R^{z}\|_{\tilde{\alpha},\theta-\alpha}\\
	&~~~+ \|R^{y}-R^{z}\|_{2\tilde{\alpha},\theta-2\alpha},
\end{align*}
where $\tilde{\alpha},\alpha,\theta \in\mathbb{R}$.

\subsection{A.2}\label{A.2}

\begin{Proof}  Lemma \ref{lem001}.
	
	Due to $G\in \mathcal{C}^{2}_{\theta-2\alpha,-\sigma}(\mathcal{B}^{2},\mathcal{B})$, it is easy to see that 
	\begin{align*}
		\|m\|_{\infty,\theta-\sigma}&\lesssim_{G} (\|y\|_{\infty,\theta}+\|z\|_{\infty,\theta}),\\
		\|m'\|_{\infty,\theta-\sigma-\alpha}&\lesssim_{G} \|y'\|_{\infty,\theta-\alpha}, \\
		\|\bar{m}'\|_{\infty,\theta-\sigma-\alpha}&\lesssim_{G} \|z'\|_{\infty,\theta-\alpha}, 
	\end{align*}
	which means
	$(m, m',\bar{m}')\in  \mathcal{C}_{1}(I_{1},\mathcal{B}_{\theta-\sigma})\times (\mathcal{C}_{1}(I_{1},\mathcal{B}_{\theta-\sigma-\alpha}))^{d}\times(\mathcal{C}_{1}(I_{1},\mathcal{B}_{\theta-\sigma-\alpha}))^{d}$.

	We proceed to prove that $m',\bar{m}'$ still belong to $(\mathcal{C}_{1}^{\alpha}(I,\mathcal{B}_{\theta-\sigma-2\alpha}))^{d}.$ For $t,s\in [0,T]$, notice that 
	\begin{align*}
		\delta m'_{t,s}&= D_{x_{1}}G(y_{t},z_{t-r})\circ y'_{t}-D_{x_{1}}G(y_{s},z_{s-r})\circ y'_{s}\\
		&=   (D_{x_{1}}G(y_{t},z_{t-r})-D_{x_{1}}G(y_{s},z_{s-r})) \circ y'_{t}+     D_{x_{1}}G(y_{s},z_{s-r}) \circ \delta y'_{t,s}.
	\end{align*}
	Taylor formula yields that
	\begin{align*}
		\|\delta m'\|_{\alpha, \theta-\sigma-2\alpha}&\lesssim_{G}(\|\delta y\|_{\alpha, \theta-2\alpha}+ \|\delta z\|_{\alpha, \theta-2\alpha})\|y'\|_{\infty,\theta-\alpha}+\|\delta y'\|_{\alpha,\theta-2\alpha}\\
		&\lesssim_{G}[(1+\rho_{\alpha, I_{1}}(\mathbf{X}))\|y,y'\|_{\mathbf{X},2\alpha,\theta}+
		(1+\rho_{\alpha, I_{2}}(\mathbf{X}))\|z,z'\|_{\mathbf{X},2\alpha,\theta}]\times (1+ \|y,y'\|_{\mathbf{X},2\alpha,\theta} ),
	\end{align*}
	where the first inequality holds since $DG$ and $D^{2} G$ are bounded operators, and the second one follows from (\ref{eq100}). By similar arguments, we can prove  $\bar{m}'\in (\mathcal{C}_{1}^{\alpha}(I_{1},\mathcal{B}_{\theta-\sigma-2\alpha}))^{d}$. 
	
	The next task is to show 
	\begin{align*}
		\bar{R}^{m}_{t,s}:&=G(y_{t},z_{t-r})-G(y_{s},z_{s-r})-DG_{x_{1}}(y_{s},z_{s-r})\circ y'_{s}\cdot\delta X_{t,s}-D_{x_{2}}G(y_{s},z_{s-r})\circ z'_{s-r}\cdot\delta X_{t-r,s-r}
	\end{align*}
	belongs to $\mathcal{C}_{\theta-\sigma}^{\alpha,2\alpha}(I_{1})$.
	
	Notice that
	\begin{align*}
		\bar{R}^{m}_{t,s}&=  G(y_{t},z_{t-r})-G(y_{s},z_{s-r})-D_{x_{1}}G(y_{s},z_{s-r})\circ \delta y_{t,s}+D_{x_{1}}G(y_{s},z_{s-r})\circ R^{y}_{t,s}\\
		&~~~-D_{x_{2}}G(y_{s},z_{s-r})\circ \delta z_{t-r,s-r}+D_{x_{2}}G(y_{s},z_{s-r})\circ R^{z}_{t-r,s-r}\\
		&=\sum_{j=1}^{4} K^{j}_{t,s},
	\end{align*}
	where
	\begin{align*}
		K^{1,m}_{t,s}&= [\int^{1}_{0}\int^{1}_{0} D^{2}_{x^{2}_{1}} G(y_{s}+\tau \tilde{\tau}\delta y_{t,s}, z_{s-r}+\tau \tilde{\tau}\delta z_{t-r,s-r})\tau\textup{d}\tilde{\tau}\textup{d}\tau]\circ (\delta y_{t,s}\otimes \delta y_{t,s}),\\
		K^{2,m}_{t,s}&= 2[\int^{1}_{0}\int^{1}_{0} D^{2}_{x_{1}x_{2}} G(y_{s}+\tau \tilde{\tau}\delta y_{t,s}, z_{s-r}+\tau \tilde{\tau}\delta z_{t-r,s-r})\tau\textup{d}\tilde{\tau}\textup{d}\tau]\circ (\delta z_{t-r,s-r}\otimes \delta y_{t,s}),\\
		K^{3,m}_{t,s}&= [\int^{1}_{0}\int^{1}_{0} D^{2}_{x^{2}_{2}} G(y_{s}+\tau \tilde{\tau}\delta y_{t,s}, z_{s-r}+\tau \tilde{\tau}\delta z_{t-r,s-r})\tau\textup{d}\tilde{\tau}\textup{d}\tau]\circ (\delta z_{t-r,s-r}\otimes \delta z_{t-r,s-r}),\\
		K^{4,m}_{t,s}&=D_{x_{1}}G(y_{s},z_{s-r})\circ R^{y}_{t,s}+D_{x_{2}}G(y_{s},z_{s-r})\circ R^{z}_{t-r,s-r}.
	\end{align*}
	
	Then, for $i=1,2$, we derive 
	\begin{align*}
		\|K^{1,m}\|_{i\alpha, \theta-\sigma-i\alpha}&\lesssim_{G}\|\delta y\|^{2}_{\alpha, \theta-\alpha},\\
		\|K^{2,m}\|_{i\alpha, \theta-\sigma-i\alpha}&\lesssim_{G}\|\delta y\|_{\alpha, \theta-\alpha}\|\delta z\|_{\alpha, \theta-\alpha},\\
		\|K^{3,m}\|_{i\alpha, \theta-\sigma-i\alpha}&\lesssim_{G}\|\delta z\|^{2}_{\alpha, \theta-\alpha},\\
		\|K^{4,m}\|_{i\alpha, \theta-\sigma-i\alpha}&\lesssim_{G}\|R^{y}\|_{i\alpha, \theta-i\alpha}+\|R^{z}\|_{i\alpha, \theta-i\alpha}.
	\end{align*}	
	Thus, for $i=1,2$, we have obtain
	\begin{align*}
		\|\bar{R}^{m}\|_{i\alpha, \theta-\sigma-i\alpha}&\lesssim_{G}  (1+\rho_{\alpha, I_{1}}(\mathbf{X})   +  \rho_{\alpha, I_{2}}(\mathbf{X}) )^{2}(1+\|y,y'\|_{\mathbf{X},2\alpha,\theta}+
		\|z,z'\|_{\mathbf{X},2\alpha,\theta})^{2}.
	\end{align*}
	Combining the above estimates,  we obtain $(m, m',\bar{m}')\in \mathcal{D}^{2\alpha}_{\bar{\mathbf{X}},\theta-\sigma}(I_{1})$ and (\ref{eq101}). 
\end{Proof}

	\begin{Proof}  Corollary \ref{cor001}.
		
	Here, we just need to estimate those terms which cause the quadratic terms of $\|y,y'\|_{\mathbf{X},2\alpha,\theta}$. 
	
	Note 
	\begin{align*}
		\|\delta m'_{t,s}\|_{\alpha,\theta-\sigma-2\alpha}&=  \|D_{x_{1}}G(y_{t},z_{t-r})\circ y'_{t}-D_{x_{1}}G(y_{s},z_{s-r})\circ y'_{s}\| _{\alpha,\theta-\sigma-2\alpha}\\
		&\lesssim_{G} \|\delta y_{t,s}\|_{\alpha,\theta-\alpha}+\| \delta z_{t-r,s-r}\|_{\alpha,\theta-\alpha}.
	\end{align*}
	
	Rewrite 
	\begin{align*}
		\bar{R}^{m}_{t,s}=\sum^{4}_{j=1} \tilde{K}^{j,m}_{t,s},
	\end{align*}
	where  
	\begin{align*}
		\tilde{K}^{1,m}_{t,s}&=\int^{1}_{0} [D_{x_{1}} G( y_{s}+\tau(\delta y_{t,s}), z_{s-r}+\tau(\delta z_{t-r,s-r}) )-D_{x_{1}} G( y_{s}, z_{s-r} )]\textup{d}\tau\circ G(y_{s},z_{s-r})\cdot \delta X_{t,s},\\
		\tilde{K}^{2,m}_{t,s}&=\int^{1}_{0} D_{x_{1}} G( y_{s}, z_{s-r} )\textup{d}\tau\circ R^{y}_{t,s},\\
		\tilde{K}^{3,m}_{t,s}&=\int^{1}_{0} [D_{x_{2}} G( y_{s}+\tau(\delta y_{t,s}), z_{s-r}+\tau(\delta z_{t-r,s-r}) )-D_{x_{2}} G( y_{s}, z_{s-r} )]\textup{d}\tau\circ z'_{s-r}\cdot \delta X_{t-r,s-r},\\
		\tilde{K}^{4,m}_{t,s}&=\int^{1}_{0} D_{x_{2}} G( y_{s}, z_{s-r} )\textup{d}\tau\circ R^{z}_{t,s}.
	\end{align*}
	Then, we obtain 
	\begin{align*}
		\|\tilde{K}^{1,m}\|_{\alpha,\theta-\sigma-\alpha}\lesssim_{G} \rho_{\alpha, I_{1}}(\mathbf{X})(\|y\|_{\infty,\theta}+\|z\|_{\infty, \theta}),
	\end{align*}
	and 
	\begin{align*}
		\|\tilde{K}^{1,m}\|_{2\alpha,\theta-\sigma-2\alpha}\lesssim_{G} \rho_{\alpha, I_{1}}(\mathbf{X})(\|\delta y\|_{\alpha,\theta-\alpha}+\|\delta z\|_{\alpha,\theta-\alpha}).
	\end{align*}
	By similar arguments in Lemma \ref{lem001}, we can prove other terms do not cause the quadratic terms of $\|y,y'\|_{\mathbf{X},2\alpha,\theta}$ . 
\end{Proof}

	\begin{Proof} Lemma \ref{lem005}.
		
	Firstly, by Taylor formula, it is easy to obtain
	\begin{align*}
		\|m-l\|_{\infty,\theta-\sigma}&\lesssim_{G} \|y-u\|_{\infty,\theta}+\|z-v\|_{\infty,\theta},\\
		\|m'-l'\|_{\infty,\theta-\sigma-\alpha}&\lesssim_{G} (\|y-u\|_{\infty,\theta}+\|z-v\|_{\infty,\theta})\|y'\|_{\infty,\theta-\alpha}+ \|y'-u'\|_{\infty,\theta-\alpha},\\
		\|\bar{m}'-\bar{l}'\|_{\infty,\theta-\sigma-\alpha}&\lesssim_{G} (\|y-u\|_{\infty,\theta}+\|z-v\|_{\infty,\theta})\|z'\|_{\infty,\theta-\alpha}+ \|z'-v'\|_{\infty,\theta-\alpha}.
	\end{align*}
	
	Next, let us estimate $\|\delta(m'-l')\|_{\alpha, \theta-\sigma-2\alpha}$. In fact, we have
	\begin{align*}
		&~~~\delta(m'-l')_{t,s}\\
		&=D_{x_{1}}G(y_{t},z_{t-r})\circ y'_{t}-D_{x_{1}}G(y_{s},z_{s-r})\circ y'_{s}-D_{x_{1}}G(u_{t},v_{t-r})\circ u'_{t}+D_{x_{1}}G(u_{s},v_{s-r})\circ u'_{s}\\
		&=\int^{1}_{0}[D^{2}_{x^{2}_{1}}G(y_{s}+\tau \delta y_{t,s}, z_{s-r}+\tau \delta z_{t-r,s-r})
		-D^{2}_{x^{2}_{1}}G(u_{s}+\tau \delta u_{t,s}, v_{s-r}+\tau \delta v_{t-r,s-r})]\textup{d}\tau\circ(\delta y_{t,s}\otimes y'_{t})\\
		&~~~+\int^{1}_{0}[D^{2}_{x_{1}x_{2}}G(y_{s}+\tau \delta y_{t,s}, z_{s-r}+\tau \delta z_{t-r,s-r})\\
		&~~~~~~~~~~~-D^{2}_{x_{1}x_{2}}G(u_{s}+\tau \delta u_{t,s}, v_{s-r}+\tau \delta v_{t-r,s-r})]\textup{d}\tau\circ(\delta z_{t-r,s-r}\otimes y'_{t})\\
		&~~~+\int^{1}_{0} D^{2}_{x^{2}_{1}}G(u_{s}+\tau \delta u_{t,s}, v_{s-r}+\tau \delta v_{t-r,s-r})\textup{d}\tau\circ[( \delta y_{t,s}-\delta u_{t,s})\otimes y'_{t}]\\
		&~~~+\int^{1}_{0} D^{2}_{x_{1}x_{2}}G(u_{s}+\tau \delta u_{t,s}, v_{s-r}+\tau \delta v_{t-r,s-r})\textup{d}\tau\circ[( \delta z_{t,s}-\delta v_{t-r,s-r})\otimes y'_{t}]\\
		&~~~+[   D_{x_{1}}G(u_{t},v_{t-r})-D_{x_{1}}G(u_{s},v_{s-r})]\circ(y'_{t}-u'_{t})\\
		&~~~+[ D_{x_{1}}G(y_{s},z_{s-r})-D_{x_{1}}G(u_{s},v_{s-r})]\circ \delta y'_{t,s}
		+D_{x_{1}}G(u_{s},v_{s-r})\circ \delta (y'-u')_{t,s}.
	\end{align*}
	After calculations, it follows
	\begin{align*}
		&~~~\|\delta(m'-l')\|_{\alpha, \theta-\sigma-2\alpha}\\	&\lesssim_{G} (1+\rho_{\alpha, I_{1}}(\mathbf{X})+\rho_{\alpha, I_{2}}(\mathbf{X}) ) (\|y-u,y'-u'\|_{\mathbf{X},2\alpha,\theta}+\|z-v,z'-v'\|_{\mathbf{X},2\alpha,\theta}    )\\
		&~~~\times(1+\|y,y'\|_{\mathbf{X},2\alpha,\theta}+
		\|z,z'\|_{\mathbf{X},2\alpha,\theta}+\|u,u'\|_{\mathbf{X},2\alpha,\theta}+\|v,v'\|_{\mathbf{X},2\alpha,\theta})^{2}.
	\end{align*}
	Analogously, we can also estimate $\|\delta(\bar{m}'-\bar{l}')\|_{\alpha, \theta-\sigma-2\alpha}$.
	
	Finally, consider 
	\begin{align*}
		{\bar{R}}^{m-l}_{t,s}:=(m-l)_{t,s}+(m'-l')_{s}\cdot\delta X_{t,s}+(\bar{m}'-\bar{l}')_{s}\cdot\delta X_{t-r,s-r}.
	\end{align*}
	Similar to the decomposition of $ {\bar{R}}^{m}$, we also have
	\begin{align*}
		\|{\bar{R}}^{m-l}\|_{t,s}=\sum_{j=1}^{4}(K^{j,m}-K^{j,l})_{t,s}. 
	\end{align*}	
	By the further  arguments, for $i=1,2$, we obtain 
	\begin{align*}
		&~~~\|{\bar{R}}^{m-l}\|_{i\alpha,\theta-\sigma-i \alpha}\\
		&\lesssim_{G}(1+\rho_{\alpha, I_{1}}(\mathbf{X})+\rho_{\alpha, I_{2}}(\mathbf{X}))^{2} (\|y-u,y'-u'\|_{\mathbf{X},2\alpha,\theta}+\|z-v,z'-v'\|_{\mathbf{X},2\alpha,\theta}    )\\
		&~~~\times(1+\|y,y'\|_{\mathbf{X},2\alpha,\theta}+
		\|z,z'\|_{\mathbf{X},2\alpha,\theta}+\|u,u'\|_{\mathbf{X},2\alpha,\theta}+\|v,v'\|_{\mathbf{X},2\alpha,\theta})^{2}. 
	\end{align*}	
	Combining the above estimates, we finish the proof.
\end{Proof}
\begin{Proof} Lemma \ref{lem003}.
	
	For $t,s\in [0,T]$, set $f_{t,s}=y_{s}\cdot\delta X_{t,s}+y^{\prime}_{s}:\mathbb{X}_{t,s}+   \bar{y}^{\prime}_{s}:\mathbb{X}(-r)_{t,s}$. If we can prove $f\in \mathcal{J}^{\alpha}_{\theta}(I)$, then the proof is completed by Lemma \ref{lem002}.  Let us achieve it in the followings. 
	
	It is easy to see that 
	\begin{align*}
		\|y\cdot \delta X\|_{\alpha,\theta}+\|y^{\prime}:\mathbb{X}\|_{2\alpha,\theta-\alpha  }+   \|\bar{y}^{\prime}:\mathbb{X}(-r)\|_{2\alpha,\theta-\alpha  } \lesssim_{G} \rho_{\alpha,I}(\bar{\mathbf{X}}) 	\|y,y',\bar{y}'\|_{\bar{\mathbf{X}},2\alpha,\theta}.
	\end{align*}
	which means that $y\cdot \delta X\in \mathcal{C}_{2}^{\alpha}(I,\mathcal{B}_{\theta}) $ and $  y^{\prime}:\mathbb{X}+ \bar{y}^{\prime}:\mathbb{X}(-r)\in \mathcal{C}^{2\alpha}_{2}(I,\mathcal{B}_{\theta-\alpha})$.    
	
	From chen's relations, we  note
	\begin{align*}
		\delta f_{t,u,s}= -\delta X_{t,u}\cdot\bar{R}^{y}_{u,s}-\delta y'_{u,s}:\mathbb{X}_{t,u}- \delta \bar{y}'_{u,s}:\mathbb{X}(-r)_{t,u}.
	\end{align*}
	Furthermore, we have 
	\begin{align*}
		\frac{\|\delta X \cdot\bar{R}^{y}\|_{\theta-2\alpha}}{(t-u)^{\alpha}(u-s)^{2\alpha}}\leq \|\bar{R}^{y}\|_{2\alpha,\theta-2\alpha}\|\delta X\|_{\alpha,\mathbb{R}^{d}} ,
	\end{align*}
	and 
	\begin{align*}
		&~~~\frac{\|\delta y'_{u,s}:\mathbb{X}_{t,u}+\delta \bar{y}'_{u,s}:\mathbb{X}(-r)_{t,u}\|_{\theta-2\alpha}}{(t-u)^{2\alpha}(u-s)^{\alpha}}\\
		&\leq \|\delta y'\|_{\alpha,\theta-2\alpha}\|\mathbb{X}\|_{2\alpha,\mathbb{R}^{d}}+\|\delta \bar{y}'\|_{\alpha,\theta-2\alpha}\|\mathbb{X}(-r)\|_{2\alpha,\mathbb{R}^{d}}.
	\end{align*}
	It  follows that $\delta X\cdot \bar{R}^{y}\in \mathcal{C}_{3}^{2\alpha,\alpha}(\mathcal{B}_{\theta-2\alpha}) $ and $\delta y':\mathbb{X}+\delta \bar{y}':\mathbb{X}(-r)\in \mathcal{C}_{3}^{\alpha,2\alpha}(\mathcal{B}_{\theta-2\alpha}).$  
	
	Therefore, we have shown $f\in \mathcal{J}^{\alpha}_{\theta}(I)$.	
\end{Proof}
\begin{Proof} Lemma \ref{lem006}.
	
	Since $(i)$ is the immediate result of $(ii)$, we provide the proof of $(ii)$  as follows. 
	
	Firstly, we show $\|\zeta\|_{\infty,\theta+\sigma}<\infty$.
	
	Set 
	\begin{align*}
		\eta_{t,s}=\int^{t}_{s}S_{t,u} y_{u}\cdot\textup{d}\bar{\mathbf{X}}_{u}- S_{t,s}[y_{s}\cdot\delta X_{t,s}+y^{\prime}_{0}:\mathbb{X}_{t,s}+   \bar{y}^{\prime}_{s}:\mathbb{X}(-r)_{t,s}].
	\end{align*}
	Rewrite
	\begin{align*}
		\zeta_{t}=  \eta_{t,0} + S_{t,0}[y_{0}\cdot\delta X_{t,0}+y^{\prime}_{0}:\mathbb{X}_{t,0}+   \bar{y}^{\prime}_{0}:\mathbb{X}(-r)_{t,0}].
	\end{align*}
	Applying Lemma \ref{lem003} to $\eta_{t,0}$ with $\beta=\sigma+2\tilde{\alpha}$, we obtain 
	\begin{align*}
		\|\eta_{t,0}\|_{\theta+\sigma}&\lesssim \rho_{\tilde{\alpha}, I}(\bar{\mathbf{X}})  \|y,y',\bar{y}'\|_{\bar{\mathbf{X}},2\tilde{\alpha},\theta}t^{\tilde{\alpha}-\sigma}\\
		&\lesssim \rho_{\alpha, I}(\bar{\mathbf{X}})  \|y,y',\bar{y}'\|_{\bar{\mathbf{X}},2\tilde{\alpha},\theta}t^{\tilde{\alpha}-\sigma}.
	\end{align*}
	By condition (\ref{eq050}), we can  derive 
	\begin{align*}
		&~~~~\|S_{t,0}[y_{0}\cdot\delta X_{t,0}+y^{\prime}_{0}:\mathbb{X}_{t,0}+   \bar{y}^{\prime}_{0}:\mathbb{X}(-r)_{t,0}]\|_{\theta+\sigma}\\
		&\lesssim t^{-\sigma }\|y_{0}\cdot\delta X_{t,0}\|_{\theta}+t^{-\tilde{\alpha}-\sigma }\|y^{\prime}_{0}:\mathbb{X}_{t,0}\|_{\theta-\tilde{\alpha}}+     t^{-\tilde{\alpha}-\sigma }\|\bar{y}^{\prime}_{0}:\mathbb{X}(-r)_{t,0}\|_{\theta-\tilde{\alpha}}\\
		&\lesssim \rho_{\alpha, I}(\bar{\mathbf{X}})(\|y_0\|_{\theta} + \|y'_0\|_{\theta-\tilde{\alpha}}   +\|\bar{y}'_{0}\|_{\theta-\tilde{\alpha}})t^{\tilde{\alpha}-\sigma}.
	\end{align*}  
	Thus, we know 
	\begin{align}\label{eq105}
		\|\zeta\|_{\infty,\theta+\sigma}\lesssim  \rho_{\alpha, I}(\bar{\mathbf{X}})  \|y,y',\bar{y}'\|_{\bar{\mathbf{X}},2\tilde{\alpha},\theta}T^{\tilde{\alpha}-\sigma}.
	\end{align}
	
	Secondly, let us prove $\|\zeta'\|_{\infty,\theta-\tilde{\alpha}+\sigma}<  \infty$ and $  \|\delta\zeta'\|_{\alpha,\theta-2\tilde{\alpha}+\sigma}<\infty$.

	By (\ref{eq053}), we can derive 
	\begin{align*}
		\|\zeta'\|_{\infty,\theta-\tilde{\alpha}+\sigma}= \|y\|_{\infty,\theta-\tilde{\alpha}+\sigma}&\lesssim 
		\sup_{t\in[0,T]}\frac{\|y_{t}-y_{0}\|_{\theta-\tilde{\alpha}+\sigma}}{t^{\tilde{\alpha}-\sigma}} t^{\tilde{\alpha}-\sigma}+ \|y_{0}\|_{\theta-\tilde{\alpha}+\sigma}\\
		&\lesssim \|y\|_{\infty, \theta}^{\frac{\sigma} {\tilde{\alpha}}}\|\delta y\|_{\tilde{\alpha}-\sigma, \theta-\tilde{\alpha}}^{\frac{\tilde{\alpha}-\sigma} {\tilde{\alpha}}} T^{\tilde{\alpha}-\sigma}+ \|y_{0}\|_{\theta},
	\end{align*}
	which implies 
	\begin{align*}
		\|y\|_{\infty, \theta-\tilde{\alpha}+\sigma}\lesssim \|y,y',\bar{y}'\|_{\bar{\mathbf{X}},2\tilde{\alpha},\theta}T^{\tilde{\alpha}-\sigma}+ \|y_{0}\|_{\theta}.
	\end{align*}
	
	Using (\ref{eq053}) and (\ref{eq103}), we obtain  
	\begin{align}
		&~~~~\|\delta\zeta'\|_{\tilde{\alpha},\theta-2\tilde{\alpha}+\sigma}\notag\\
		&\lesssim (\|y'_{t}\|_{\infty, \theta-2\tilde{\alpha}+\sigma}+\|\bar{y}'_{t}\|_{\infty, \theta-2\tilde{\alpha}+\sigma})\|\delta X\|_{\tilde{\alpha},\mathbb{R}^{d}}+\| \bar{R}^{y}\|_{\tilde{\alpha}, \theta-2\tilde{\alpha}+\sigma}\label{eq503}\\
		&\lesssim(\|y,y',\bar{y}'\|_{\bar{\mathbf{X}},2\tilde{\alpha},\theta}T^{\alpha-\tilde{\alpha}})\|\delta X\|_{\alpha,\mathbb{R}^{d}}+(\|\bar{R}^{y} \|_{\tilde{\alpha},\theta-\tilde{\alpha}}+ \|\bar{R}^{y} \|_{2\tilde{\alpha},\theta-2\tilde{\alpha}})T^{\tilde{\alpha}-\sigma}\notag\\
		&\lesssim  (1+\rho_{\tilde{\alpha}, I}(\bar{\mathbf{X}}))  \|y,y',\bar{y}'\|_{\bar{\mathbf{X}},2\tilde{\alpha},\theta}T^{\lambda_{0}}.\notag
	\end{align}
	
	The final task is to estimate 
	$R^{\zeta}_{t,s}:=\delta \zeta_{t,s}-\zeta'_{s}\cdot \delta X_{t,s}$.
	we rewrite 
	\begin{align*}
		R^{\zeta}_{t,s}&= 	\int^{t}_{0}S_{t,u} y_{u}\cdot\textup{d}\bar{\mathbf{X}}_{u}-	\int^{s}_{0}S_{s,u} y_{u}\cdot\textup{d}\bar{\mathbf{X}}_{u}-y_{s}\cdot \delta X_{t,s}\\
		&=(S_{t,s}-\textup{id})y_{s} \cdot\delta X_{t,s}+ (S_{t,s}-\textup{id}) \int^{s}_{0}S_{s,u} y_{u}\cdot\textup{d}\bar{\mathbf{X}}_{u}+S_{t,s}y'_{s}:\mathbb{X}_{t,s}+S_{t,s}\bar{y}'_{s}:\mathbb{X}(-r)_{t,s}+ \eta_{t,s}\\
		&:=\sum^{5}_{j=1} \Lambda^{j}_{t,s}.
	\end{align*} 
	Due to the property (\ref{eq050}), for $i=1,2$, we derive 
	\begin{align*}
		\|\Lambda^{1}_{t,s}\|_{\theta-i\tilde{\alpha}+\sigma}\lesssim \|y\|_{\infty,\theta}(t-s)^{i\tilde{\alpha}-\sigma}\rho_{\alpha, I}(\bar{\mathbf{X}})(t-s)^{\alpha},
	\end{align*}
	which implies 
	\begin{align*}
		\|\Lambda^{1}\|_{i\tilde{\alpha},\theta-i\tilde{\alpha}+\sigma}\lesssim \rho_{\alpha, I}(\bar{\mathbf{X}})T^{\alpha-\sigma}\|y,y',\bar{y}'\|_{\bar{\mathbf{X}},2\tilde{\alpha},\theta}.
	\end{align*} 
	According to (\ref{eq105}), we have
	\begin{align*}
		\|\Lambda^{2}_{t,s}\|_{\theta-i\tilde{\alpha}+\sigma}&\lesssim (t-s)^{i\tilde{\alpha}} \|\zeta\|_{\infty,\theta+\sigma}\\
		&\lesssim (t-s)^{i\tilde{\alpha}} \rho_{\alpha, I}(\bar{\mathbf{X}})  \|y,y',\bar{y}'\|_{\bar{\mathbf{X}},2\tilde{\alpha},\theta}T^{\tilde{\alpha}-\sigma},
	\end{align*}
	from which we obtain
	\begin{align*}
		\|\Lambda^{2}\|_{i\tilde{\alpha},\theta-i\tilde{\alpha}+\sigma} \lesssim  \rho_{\alpha, I}(\bar{\mathbf{X}})T^{\tilde{\alpha}-\sigma}\|y,y',\bar{y}'\|_{\bar{\mathbf{X}},2\tilde{\alpha},\theta}.
	\end{align*}
	
	Using condition (\ref{eq050}) again, we  derive
	\begin{align}
		\|\Lambda^{3}\|_{\tilde{\alpha},\theta-\tilde{\alpha}+\sigma}&\lesssim  \rho_{\alpha, I}(\bar{\mathbf{X}})\|y'\|_{\infty, \theta-\tilde{\alpha}}T^{2\alpha-\tilde{\alpha}-\sigma},\notag \\
		\|\Lambda^{3}\|_{2\tilde{\alpha},\theta-2\tilde{\alpha}+\sigma}&\lesssim  \rho_{\alpha, I}(\bar{\mathbf{X}})\|y'\|_{\infty, \theta-\tilde{\alpha}}T^{2\alpha-2\tilde{\alpha}}, \label{eq505}
	\end{align}
	and 
	\begin{align}
		\|\Lambda^{4}\|_{\tilde{\alpha},\theta-\tilde{\alpha}+\sigma}&\lesssim  \rho_{\alpha, I}(\bar{\mathbf{X}})\|\bar{y}'\|_{\infty, \theta-\tilde{\alpha}}T^{2\alpha-\tilde{\alpha}-\sigma},\notag \\
		\|\Lambda^{4}\|_{2\tilde{\alpha},\theta-2\tilde{\alpha}+\sigma}&\lesssim  \rho_{\alpha, I}(\bar{\mathbf{X}})\|\bar{y}'\|_{\infty, \theta-\tilde{\alpha}}T^{2\alpha-2\tilde{\alpha}}. \label{eq506}
	\end{align}
	And, applying Lemma \ref{lem003} again, for $i=1,2$, we obtain 
	\begin{align*}
		\|\Lambda^{5}\|_{i\tilde{\alpha},\theta-i\tilde{\alpha}+\sigma}\lesssim \rho_{\alpha, I}(\bar{\mathbf{X}})T^{\tilde{\alpha}-\sigma}\|y,y',\bar{y}'\|_{\bar{\mathbf{X}},2\tilde{\alpha},\theta}.
	\end{align*}
	
	The result $(iii)$  can be obtained by modifying the proof of $(ii)$. In fact,  we can use 
	\begin{align*}
		\|y'\|_{\infty,\theta-2\tilde{\alpha}+\sigma}&\lesssim 
		\sup_{t\in[0,T]}\frac{\|y'_{t}-y'_{0}\|_{\theta-2\tilde{\alpha}+\sigma}}{t^{\tilde{\alpha}-\sigma}} t^{\tilde{\alpha}-\sigma}+ \|y'_{0}\|_{\theta-2\tilde{\alpha}+\sigma}\\
		&\lesssim \|y'\|_{\infty, \theta-\tilde{\alpha}}^{\frac{\sigma} {\tilde{\alpha}}}\|\delta y'\|_{\tilde{\alpha}-\sigma, \theta-2\tilde{\alpha}}^{\frac{\tilde{\alpha}-\sigma} {\tilde{\alpha}}} T^{\tilde{\alpha}-\sigma}+ \|y'_{0}\|_{\theta-\tilde{\alpha}},
	\end{align*}
	and 
	\begin{align*}
		\|\bar{y}'\|_{\infty,\theta-2\tilde{\alpha}+\sigma}&\lesssim 
		\sup_{t\in[0,T]}\frac{\|\bar{y}'_{t}-\bar{y}'_{0}\|_{\theta-2\tilde{\alpha}+\sigma}}{t^{\tilde{\alpha}-\sigma}} t^{\tilde{\alpha}-\sigma}+ \|\bar{y}'_{0}\|_{\theta-2\tilde{\alpha}+\sigma}\\
		&\lesssim \|\bar{y}'\|_{\infty, \theta-\tilde{\alpha}}^{\frac{\sigma} {\tilde{\alpha}}}\|\delta \bar{y}'\|_{\tilde{\alpha}-\sigma, \theta-2\tilde{\alpha}}^{\frac{\tilde{\alpha}-\sigma} {\tilde{\alpha}}} T^{\tilde{\alpha}-\sigma}+ \|\bar{y}'_{0}\|_{\theta-\tilde{\alpha}}.
	\end{align*}
	to estimate (\ref{eq503}), (\ref{eq505})  and (\ref{eq506}). Then, it follows the result as desired.
\end{Proof}

	\begin{Proof} Lemma \ref{lem007}.

	By Lemma \ref{lem006}, we know $\chi'_{t}=z_{t}$. Then, interpolation inequality yields 
	\begin{align*}
		\|\zeta'-\chi'\|_{\infty, \theta+\sigma-\alpha}&=	\|y-z\|_{\infty, \theta+\sigma-\alpha}\\
		& \lesssim  \rho_{2\tilde{\alpha},2\alpha, \theta}(y,z)T^{\lambda}+ \|y(0)-z(0)\|_{\theta}.
	\end{align*}

	And, it is not hard to deduce 
	\begin{align*}
		&~~~~\|\delta y_{t,s}-\delta z_{t,s}\|_{\theta+\sigma-2\alpha}\\
		&=\|y'_{s}\cdot\delta X_{t,s}+\bar{y}'_{s}\cdot\delta X_{t-r,s-r}+\bar{R}^{y}_{t,s}-z'_{s}\cdot\delta Y_{t,s}-\bar{z}'_{s}\cdot\delta y_{t-r,s-r}-\bar{R}^{z}_{t,s}\|_{\theta+\sigma-2\alpha}\\
		&\lesssim \rho_{\alpha, I}(\bar{\mathbf{X}})[\|y'-z'\|_{\infty,\theta-\alpha}+\|\bar{y}'-\bar{z}'\|_{\infty,\theta-\alpha}] (t-s)^{\alpha}+ \|\bar{R}^{y}_{t,s}-\bar{R}^{z}_{t,s}\|_{\theta+\sigma-2\alpha}\\
		&~~~+\rho_{\alpha, I}(\bar{\mathbf{X}}, \bar{\mathbf{Y}})(\|z'\|_{\infty,\theta-\alpha}+\|\bar{z}'\|_{\infty,\theta-\alpha})(t-s)^{\alpha}.
	\end{align*}
	And, we note 
	\begin{align*}
		\|\bar{R}^{y}-\bar{R}^{z}\|_{\tilde{\alpha},\theta+\sigma-2\alpha}\lesssim (\|\bar{R}^{y}-\bar{R}^{z}\|_{\tilde{\alpha},\theta-\alpha}+ \|\bar{R}^{y}-\bar{R}^{z}\|_{2\tilde{\alpha},\theta-2\alpha})T^{\frac{\tilde{\alpha}(\alpha-\sigma)}{\alpha}}
	\end{align*}
	Thus, it holds
	\begin{align*}
		\|\delta\zeta'-\delta\chi'\|_{\tilde{\alpha},\theta+\sigma-\alpha}=\|\delta y-\delta z\|_{\tilde{\alpha},\theta+\sigma-\alpha}\lesssim 	\rho_{2\tilde{\alpha},2\alpha, \theta}(y,z)T^{\lambda}+\rho_{\alpha, I}(\bar{\mathbf{X}}, \bar{\mathbf{Y}}).
	\end{align*}

	Set 
	\begin{align*}
		\hat{f}_{t,s}=y_{s}\cdot\delta X_{t,s}+y'_{s}: \mathbb{X}_{t,s}+\bar{y}'_{s}: \mathbb{X}(-r)_{t,s}-
		z_{s}\cdot\delta Y_{t,s}-z'_{s}: \mathbb{Y}_{t,s}-\bar{z}'_{s}: \mathbb{Y}(-r)_{t,s},
	\end{align*}
	and
	\begin{align*}
		\hat{\eta}_{t,s}=\int^{t}_{s}S_{t,u}y_{u}\cdot \textup{d}\bar{\mathbf{X}}_{u}- \int^{t}_{s}S_{t,u}z_{u}\cdot \textup{d}\bar{\mathbf{Y}}-S_{t,s}\hat{f}_{t,s}.
	\end{align*}
	Employing Lemma \ref{lem003}, we obtain 
	\begin{align*}
		\|\hat{\eta}_{t,0}\|_{\theta+\sigma}\lesssim \|\hat{f}\|_{\mathcal{J}^{\tilde{\alpha}}_{\theta-2\alpha+2\tilde{\alpha}}}t^{3\tilde{\alpha}-2\alpha-\sigma}
		\lesssim  \rho_{2\tilde{\alpha},2\alpha, \theta}(y,z)t^{3\tilde{\alpha}-2\alpha-\sigma}+\rho_{\alpha,I}(\bar{\mathbf{X}},\bar{\mathbf{Y}}).
	\end{align*}
	And, it is easy to derive 
	\begin{align*}
		\|S_{t,0}\hat{f}_{t,0}\|_{\theta+\sigma}\lesssim  \rho_{2\tilde{\alpha},2\alpha, \theta}(y,z)t^{\alpha-\sigma}+\rho_{\alpha,I}(\bar{\mathbf{X}},\bar{\mathbf{Y}}).
	\end{align*}
	Thus, it holds 
	\begin{align}
		\|\zeta-\chi\|_{\infty,\theta+\sigma}\lesssim \rho_{2\tilde{\alpha},2\alpha, \theta}(y,z)T^{\lambda}+\rho_{\alpha,I}(\bar{\mathbf{X}},\bar{\mathbf{Y}}).
	\end{align}
	
	Note
	\begin{align*}
		R^{\zeta}_{t,s}-R^{\chi}_{t,s}:&=\delta\zeta_{t,s}-\delta\chi_{t,s}-\zeta'_{s}\cdot \delta X_{t,s}+\chi'_{s}\cdot \delta Y_{t,s}.
	\end{align*}
	Similar to the proof in Lemma \ref{lem006}, for $i=1,2$, it follows 
	\begin{align*}
		\|R^{\zeta}-R^{\chi}\|_{i\tilde{\alpha},\theta+\sigma-i\alpha}\lesssim \rho_{2\tilde{\alpha},2\alpha, \theta}(y,z)T^{\lambda}+\rho_{\alpha,I}(\bar{\mathbf{X}},\bar{\mathbf{Y}}).
	\end{align*}
	Consequently, the proof is completed.
\end{Proof}

\section*{Acknowledgements}
The work is supported in part by the NSFC Grant Nos. 12171084 and the
Fundamental Research Funds for the Central Universities No. RF1028623037.
\section*{Data Availability Statements}
Data sharing not applicable to this article as no datasets were generated or analyzed during the current study.
\section*{Conflict of Interest}
The authors declare that they have no conflict of interest.
	
\end{document}